\documentclass{siamltex}
\usepackage{graphicx}
\usepackage{bm}
\usepackage{amsmath}
\usepackage{amssymb}
\usepackage{epsfig}
\usepackage{psfig}
\usepackage{latexsym}
\usepackage{rotating}
\usepackage{amsbsy}
\usepackage{mathrsfs}
\usepackage{lineno}
\usepackage{stmaryrd}
\usepackage{xparse}
\usepackage[usenames, dvipsnames]{color}
\usepackage[breaklinks,colorlinks=true,allcolors=blue]{hyperref}

\pdfoutput=1    

\NewDocumentCommand{\dgal}{sO{}m}{%
  \IfBooleanTF{#1}
    {\dgalext{#3}}
    {\dgalx[#2]{#3}}%
}
\NewDocumentCommand{\dgalext}{m}{%
  \sbox0{%
    \mathsurround=0pt 
    $\left\{\vphantom{#1}\right.\kern-\nulldelimiterspace$%
  }%
  \sbox2{\{}%
  \ifdim\ht0=\ht2
    \{\kern-.45\wd2 \{#1\}\kern-.45\wd2 \}%
  \else
    \left\{\kern-.5\wd0\left\{#1\right\}\kern-.5\wd0\right\}%
  \fi
}

\NewDocumentCommand{\dgalx}{om}{%
  \sbox0{\mathsurround=0pt$#1\{$}%
  \sbox2{\{}%
  \ifdim\ht0=\ht2
    \{\kern-.45\wd2 \{#2\}\kern-.45\wd2 \}%
  \else
    \mathopen{#1\{\kern-.5\wd0 #1\{}
    #2
    \mathclose{#1\}\kern-.5\wd0 #1\}}
  \fi
}




\newtheorem{remark}{Remark}[section]


\newcommand{\sigmab}{\mbox{\boldmath$\sigma$}}


\font\msbm=msbm10

\newcommand{\R}{\hbox{{\msbm \char "52}}}


\definecolor{otherblue}{rgb}{0,0.3,0.6}
\def\rbl#1{{\textcolor{otherblue}{#1}}}

\title{{Robust a posteriori error estimators  for mixed approximation of nearly
incompressible elasticity}\thanks{{This work was supported 
by  EPSRC grant EP/P013317.}}}

\author{
Arbaz Khan\thanks{
School of Mathematics, University of Manchester, UK (\tt{arbaz.khan@manchester.ac.uk})}
\and
Catherine E. Powell\thanks{
School of Mathematics, University of Manchester, UK (\tt{c.powell@manchester.ac.uk})}
\and
David J. Silvester\thanks{
School of Mathematics, University of Manchester, UK (\tt{d.silvester@manchester.ac.uk})}.
}

\begin{document}

\maketitle

\begin{abstract}
This paper is concerned with the analysis and implementation of robust finite element approximation  methods for mixed formulations of linear elasticity problems where the elastic solid is almost incompressible. Several novel a posteriori error estimators for the energy norm of the finite element error are proposed and analysed. We establish upper and lower bounds for the energy error in terms of the proposed error estimators and prove that the constants in the bounds are independent of the Lam\'{e}  coefficients: thus the proposed estimators are robust in the incompressible limit.  Numerical  results are presented that validate the theoretical estimates. The software used to generate these results is available online.
\end{abstract}

\begin{keywords}
A posteriori analysis, {planar} elasticity,  finite elements, mixed approximation,  error estimation.
\end{keywords}

\begin{AMS} 65N30, 65N15.
\end{AMS}

\pagestyle{myheadings}

\thispagestyle{plain}
\markboth{A.\ KHAN, C.\  E.\ POWELL and D.\  J.\  SILVESTER}
{Error estimators for nearly incompressible elasticity}

\section{Introduction}\label{sec11}
The locking of finite element methods when solving nearly incompressible
elasticity problems is a  significant practical issue in computational engineering. 
The standard way of avoiding locking is to write the underlying equations as a system, by introducing an additional unknown (an auxiliary variable) which is related to pressure. We will adopt this strategy in this work, with the aim of developing  robust and effective a posteriori error estimation techniques for the  resulting mixed approximations.

{Our starting point is the classical linear elasticity problem}
\begin{subequations}  \label{os1}
\begin{align}  \label{os1a}
 -\nabla\cdot\sigmab& =\bm{f} \quad  \mbox{in } \Omega &&\hspace*{-24pt}(\mbox{{equilibrium} of forces}),\\
 \bm{u}&=\bm{g} \quad \mbox{on } \Gamma_D &&\hspace*{-24pt}(\mbox{{essential} boundary condition}),\\
 \sigmab \bm{n}&=\bm{0} \quad \mbox{on } \Gamma_N &&\hspace*{-24pt}(\mbox{{natural} boundary condition}),
  \label{os1c}
\end{align}
\end{subequations}
where $\Omega$ is a bounded Lipschitz polygon in $\R^2$ with {a} boundary 
$\Gamma = \partial\Omega= \Gamma_D \cup \Gamma_N$,
 {where} $\Gamma_D\cap\Gamma_N= \emptyset$. 
Here the linear elastic deformation of an isotropic {solid}  
 is written in terms of the stress tensor $\sigmab : \R^2\rightarrow \R^{2\times 2}$, the strain 
tensor $\bm{\varepsilon} : \R^2\rightarrow \R^{2\times 2}$, the body force 
$\bm{f}: \R^2\rightarrow \R^{2}$ and displacement field $\bm{u}: \R^2\rightarrow \R^{2} $.
The form of the stress tensor is given by
\begin{align*}
\sigmab=2 \mu \bm{\varepsilon}(\bm{u})+\lambda ({\nabla\cdot \bm{u}}){\bm{I}},
\end{align*}
where ${\bm{I}}$ is the $2\times 2$ identity matrix and we have  
\begin{align*}
\bm{\varepsilon}(\bm{u})=\frac{1}{2}(\nabla \bm{u}+(\nabla \bm{u})^{\top}).
\end{align*} 
Here $\mu$ and $\lambda$ are the Lam\'{e}  coefficients{: with $0<\mu_1<\mu<\mu_2 <\infty$} 
and $0<\lambda<\infty$. They can be written in 
terms of the Young's modulus $E$ and the Poisson ratio $\nu$ as 
\begin{align*}
\mu=\frac{E}{2(1+\nu)}, \quad \lambda=\frac{E\nu}{(1+\nu)(1-2\nu)}.
\end{align*}

{The mathematical issue that  underlies the phenomenon of locking is that
the coefficient $\lambda$ (and hence the stress tensor $\sigmab$) is unbounded  in the 
incompressible limit $\nu = 1/2$.}
{In this work we will consider {\it pressure robust}  approximation methods, which arise
from considering the following {\sl Herrmann} mixed  formulation \cite{RLH} of (\ref{os1a})--(\ref{os1c}),}
\begin{subequations} \label{os2a}
\begin{align}
 -\nabla\cdot\sigmab& =\bm{f} \quad\mbox{in } \Omega, \\
 \nabla\cdot\bm{u}+\frac{p}{\lambda} &=0\quad\mbox{in } \Omega,\\
 \bm{u}&=\bm{g}\quad\mbox{on } \Gamma_D,\\
 \sigmab \bm{n}&= {\bm{0}} \quad\mbox{on }\Gamma_N. \label{os2b}
\end{align}
\end{subequations}
Here, we have introduced the {auxiliary} variable  $p$  {(the so-called Herrmann pressure) and the stress tensor definition is given by}
\begin{align}
\sigmab=2 \mu \bm{\varepsilon}(\bm{u})- p{\bm{I}}.
\end{align}
There is an extensive literature on finite element approximation of elasticity problems; see
Boffi et al~\cite{DFM} for a comprehensive overview  and Hughes~\cite{TJH} for an 
engineering perspective. {Our belief is  that this paper is the first  comprehensive study 
of a posteriori error estimation techniques for mixed approximations of  planar
elasticity}.\footnote{Our analysis can easily be generalised
 to cover three-dimensional {isotropic} linear elasticity.}{An important feature  is that 
 the  constructed error estimators are {\it robust} in the sense that material parameters
do not appear in the norm-equivalence constants.} 
  {We note that other aspects} of mixed approximation of the {Herrmann} formulation 
have been {discussed} by  Stenberg and collaborators~\cite{KS,DR} and  
by Houston et al.~\cite{PDT} previously.  \rbl{Other relevant papers 
include~\cite{barrios2006residual,lonsing2004posteriori,becker2009nitsche,arnold2002mixed,arnold2007mixed}.}

The rest of the paper is organised as follows. {Section~\ref{Hdivmethsec} 
discusses finite element approximation of the Hermann formulation (\ref{os2a}).}
{A  detailed residual-based a posteriori error analysis is presented in Section~\ref{rposest}. Building on this, a selection of novel and  potentially more efficient local error problem estimators are discussed in Section~\ref{SLEPE}. Numerical results that complement the theory are then presented in the final section.}
\section{Approximation aspects}
\label{Hdivmethsec}
{Our notation is conventional:} $H^s(\omega)$ denotes {the}
usual  Sobolev space  with the associated norm
$||\cdot||_{s,\omega}$ for $s\ge0$. In {the case}
$\omega=\Omega$, we use $||\cdot||_{s}$ instead of
$||\cdot||_{s,\Omega}$. {We will denote
vector-valued Sobolev spaces} by boldface letters
$\bm{H}^{s}(\omega)=\bm{H}^{s}(\omega;\R^2)$. {We also define}
 \begin{align*}
\bm{H}^1_E(\Omega):=\bigl\{\bm{v}\in \bm{H}^1(\Omega)
                    \;\big|\; \bm{v}|_{\Gamma_D}=\bm{g}\bigr\}, \quad
\bm{H}^{1\over 2}(\Gamma_D):=\bigl\{\bm{v} \, |\, \bm{v}=\bm{u}|_{\Gamma_D}, \bm{u}\in \bm{H}^1(\Omega)\bigr\} ,
\end{align*} 
{and the test spaces} 
$$
 \bm{H}^1_{E_0}(\Omega) :=\bigl\{\bm{v}\in \bm{H}^1(\Omega)
                    \;\big|\; {\bm{v}|_{\Gamma_D}}=\bm{0}\bigr\} , \quad M :=  L^{2}(\Omega).
$$
{The standard weak formulation of (\ref{os2a})  is given by}: 
 find $(\bm{u},p)\in \bm{H}^1_E\times M$ such that 
 \begin{subequations} \label{scm11a}
\begin{align}
a(\bm{u},\bm{v})+b(\bm{v},p)&=f(\bm{v})\quad \forall \bm{v}\in\bm{H}^1_{E_0}, \\
b(\bm{u},q)-c(p,q)&=0\quad\quad\,\forall  q\in  M, \label{scm11b}
\end{align}
\end{subequations}
with forms defined so that
\begin{gather*}
a(\bm{u},\bm{v})=2\mu\int_{\Omega}\bm{\varepsilon}(\bm{u}):\bm{\varepsilon}(\bm{v}),
\quad  b(\bm{v},p)=-\int_{\Omega} p \nabla\cdot \bm{v}, \\
c(p,q)=\frac{1}{\lambda}\int_{\Omega} pq,
\quad f(\bm{v})=\int_{\Omega}\bm{f}\,\bm{v}.
\end{gather*}
{We will assume that the load function} $\bm{f}\in ( L^{2}(\Omega))^2$.
For convenience, the boundary data $\bm{g}\in \bm{H}^{1\over 2}(\Gamma_D)$ will be taken to be a polynomial of degree at most two {in each component}---this will ensure that no error is incurred in approximating the {essential} boundary condition on $\Gamma_D$. 
{Following convention, we also} define the bilinear form
\begin{align}
\mathcal{B}(\bm{u},p; \bm{v},q)=a(\bm{u},\bm{v})+b(\bm{v},p)+b(\bm{u},q)-c(p,q),
\end{align}
{so as to} express the formulation (\ref{scm11a}) in the compact form:
find $(\bm{u},p)\in \bm{H}^1_E\times  M$ such that 
\begin{align}\label{scm12}
\mathcal{B}(\bm{u},p; \bm{v},q)=f(\bm{v}), \quad \forall (\bm{v},q)\in\bm{H}^1_{E_0}\times  M.
\end{align}
The well-posedness of the formulation (\ref{scm12}) is addressed in the next two remarks.
\begin{remark}  
For a compressible material with $\nu\in\left (0,\frac{1}{2}\right )$, the existence and uniqueness 
of  {a weak solution satisfying}  (\ref{scm12}) is directly implied by the  
coercivity of $\mathcal{B}(\bm{u},p; \bm{u},-p)$ on $\bm{H}^1_{E_0}$, {see (\ref{infsup1})}. 
\end{remark}
\begin{remark}  
For $\nu=\frac{1}{2}$, the existence and uniqueness of  {a weak solution satisfying} 
(\ref{scm12}) is implied by the  
coercivity of $a(\bm{u},\bm{u})$ over $\bm{H}^1_{E_0}$ (by Korn's inequality) together  
with an {inf-sup condition satisfied by
$b(\bm{v},p)$ on $\bm{H}^1_{E_0}\times M$;  see \cite{QLDS}, \cite{VGPAR}, or \cite{DFM} for the proof.}
\end{remark}

To define the finite element approximation, we let $\{\mathcal{T}_{h}\}$ denote a family of shape regular rectangular meshes of $\bar{\Omega}$ into rectangles $K$ of diameter $h_K$. For  each $\mathcal{T}_h$, we define $\mathcal{E}_h$ as the set of all edges of 
$\mathcal{T}_h$ and $h_E$ as the length of the edge $E\in\mathcal{E}_h$.
To obtain the discrete weak formulation of (\ref{os2a}) {we introduce finite-dimensional 
subsets
$\bm{X}^h_E \subset \bm{H}^1_{E}$, $\bm{X}^h_0 \subset \bm{H}^1_{E_0}$ 
and $M^h \subset M$}. 
The discrete weak formulation {is then given by}:
 find $(\bm{u}_h,p_h)\in \bm{X}^h_E\times M^h$ such that 
 \begin{subequations} \label{FEA11}
\begin{align}
a(\bm{u}_h,\bm{v}_h)+b(\bm{v}_h,p_h)&=f(\bm{v}_h)\quad \forall  \bm{v}_h\in \bm{X}^h_0, \\
b(\bm{u}_h,q_h)-c(p_h,q_h)&=0\quad\quad\;\;\forall  q_h\in M^h.
\end{align}
\end{subequations}
{Analogous to (\ref{scm12}), the discrete formulation can also be written as}: 
find $(\bm{u}_h,p_h)\in \bm{X}^h_E\times M^h$ such that 
\begin{align}\label{FEA12}
\mathcal{B}(\bm{u}_h,p_h; \bm{v}_h,q_h)=f(\bm{v}_h) \quad \forall (\bm{v}_h,q_h)\in \bm{X}^h_0\times M^h.
\end{align}
Well-posedness of the discrete formulation is (essentially) immediate.
\begin{remark}  
For a compressible material with $\nu\in\left (0,\frac{1}{2}\right )$, the existence and uniqueness 
of  a discrete solution satisfying  (\ref{FEA11})  or  (\ref{FEA12}) is directly implied by the (inherited) coercivity of $\mathcal{B}(\bm{u}_h,p_h; \bm{u}_h,-p_h)$ on $\bm{X}^h_{0}$. 
\end{remark}
\begin{remark}  
For $\nu=\frac{1}{2}$, the existence and uniqueness of  a discrete solution satisfying
 (\ref{FEA11})  or  (\ref{FEA12}) is implied by the  (inherited)
coercivity of $a(\bm{u}_h,\bm{u}_h)$ over $\bm{X}^h_{0}$, 
together  with a {\it discrete} inf-sup condition 
 satisfied by $b(\bm{v}_h,p_h)$ on $\bm{X}^h_{0}\times M^h$.   This inf-sup condition is 
 associated  with the construction of stable {\it Stokes} elements in 
 incompressible flow modelling and is not automatic---it needs to be verified for specific choices of the approximation spaces $\bm{X}^h_{0}$ and $M^h$ on a case-by-case basis.
\end{remark}

{Note that the solution space} $\bm{X}^h_E$ is obtained from the space $\bm{X}^h_0$ {by construction:}
\begin{align*}
\bm{X}^{h}_E= \left \{ \bm{u}\Big|\bm{u}=\sum_{j=1}^{n_u} a_j\bm{\phi}_j
+\sum_{j=n_u+1}^{n_u+n_\partial} a_j\bm{\phi}_j \right \}
\end{align*}
with coefficients $a_j\in \mathbb{R}$ and associated {vector-valued} basis functions $\{\bm{\phi}_j\}_{j=1}^{n_u}$ 
that span $\bm{X}^h_0$. The additional coefficients $ \{a_j\}_{ j=n_u+1}^{n_u+n_\partial}$ are associated with 
 Lagrange interpolation of the boundary data $\bm{g}$ on $\Gamma_D$. 
The finite dimensional spaces $\bm{X}^h_0$ and $M^h$ are related to $\{\mathcal{T}_{h}\}$. 
While the analysis in the next section is applicable to \emph{any} conforming approximation pair, the focus in the final two sections of the paper is on the Taylor--Hood approximation pair  $\bm{Q}_2$--$\bm{Q}_1$, (which combines continuous biquadratic approximation of the components of the displacement with a continuous bilinear approximation of the pressure field) and the $\bm{Q}_2$--$\bm{P}_{-1}$ pair (which uses a discontinuous linear approximation of the pressure field). A key point is that both methods are  known to be inf--sup stable Stokes approximations in  two (and also in three) spatial dimensions; see Elman et al.~\cite{HDA} for a detailed discussion.

\section{Residual-based a posteriori error analysis}\label{rposest}

{The  error  analysis will be developed in the  (energy) norm:}
\begin{align}\label{enorm}
|||(\bm{u},p)|||^2&=2\mu\, {|| \nabla \bm{u}||^2_{0}}  +(2\mu)^{-1}||p||^2_{0}+\lambda^{-1}||p||^2_{0}.
\end{align}
Note that there is a natural extension  of (\ref{enorm}) to
the {\it Hydrostatic formulation} of linear elasticity discussed by Boffi \& Stenberg in~\cite{DR}. 
{Thus, mixed approximation  of the Hydrostatic formulation using  $\bm{Q}_2$--$\bm{Q}_1$
 and  $\bm{Q}_2$--$\bm{P}_{-1}$  is also covered by our analysis.}\footnote{{It is 
 straightforward to verify that both of these  mixed approximation methods
satisfy the additional {coercivity} condition that is discussed  in~\cite{DR}.}} 

\subsection{A residual error estimator}
First, we define some important parameters for the analysis, which have an explicit dependence on the local {grid size}, as well as the Lam\'e coefficients:
\begin{align}\label{gridparams}
\rho_K=h_K(2\mu)^{-\frac{1}{2}}/2,\quad \rho_E=h_E(2\mu)^{-1}/2,\quad\rho_d= 1/( \lambda^{-1} + (2\mu)^{-1}).
\end{align}
Next, we define a local error indicator $\eta_{K}$ for each {element} $K\in\mathcal{T}_{h}$. The square of this local error indicator is the sum of terms,
$\eta^2_{K}=\eta^2_{R_K}+\eta^2_{E_K}+\eta_{J_K}^2$, with 
\begin{align} \label{components} 
\eta^2_{R_K}=\rho_{K}^2||\bm{R}_K||^2_{0,K}, \quad \eta^2_{J_K}=\rho_d||R_K||^2_{0,K}, \quad 
\eta^2_{E_K}=\sum_{E\in {\partial K}}\rho_E||\bm{R}_E||^2_{0,E},
\end{align}
where the two {\it element} residuals are given by 
\begin{align}
\bm{R}_K=\left\{\bm{f}_{h}+\nabla\cdot(2\mu \bm{\varepsilon}(\bm{u}_{h}))-\nabla p_{h}\right\} \big|_K,
\quad R_K=\left \{\nabla\cdot \bm{u}_h+\frac{1}{\lambda}p_h \right\} \Big|_K,
\end{align}
and {the  {\it edge} residual is associated with  the normal stress jump, so that}
\begin{align}\label{stressjump_def}
\bm{R}_E=\left\{\begin{array}{ll}
\frac{1}{2}\llbracket(p_{h}{\bm{I}}-2\mu\bm{\varepsilon}(\bm{u}_{h}))
\bm{n}\rrbracket_E & E\in \mathcal{E}_h\setminus\Gamma ,\\
((p_{h}{\bm{I}}-2\mu\bm{\varepsilon}(\bm{u}_{h}))\bm{n})_E& E\in \mathcal{E}_h\cap\Gamma_N ,\\
0 & E\in \mathcal{E}_h\cap\Gamma_D .
\end{array}\right. 
\end{align}
We let $\bm{f}_h$ be a piecewise polynomial approximation of $\bm{f}$ {that is
possibly discontinuous across element edges and we associate it with the data oscillation term}
\begin{align}
 \label{dataapp1}
 \Theta_{K}^2=\rho_{K}^2||\bm{f}-\bm{f}_{h}||^{2}_{0,K}.
 \end{align}
The residual error estimator and data oscillation error are then defined respectively, by summing the element contributions to give
 \begin{align}\label{errest1}
 \eta=\left(\sum_{K\in\mathcal{T}_{h}}\eta_{K}^2  \right)^{1/2} \quad \hbox{and} \quad
  \Theta=\left(\sum_{K\in\mathcal{T}_{h}}\Theta_{K}^2\right)^{1/2}.
 \end{align}
 
 The estimator $\eta$ is a reliable and efficient energy norm error estimator for any conforming mixed  approximation satisfying (\ref{FEA11}).  Proofs of the following  theorems are presented in subsequent sections.  The symbols $\lesssim$ and $\gtrsim$ will be used to denote bounds that are valid up to positive constants---these will be independent of the local mesh parameters ($h_E$ and $h_K$) as well as the  Lam\'{e}  coefficients ($\mu$ and $\lambda$)  that are specified in the formulation of the elasticity problem. The first result is that the estimator $\eta$ in (\ref{errest1}) gives rise to a reliable a posteriori error bound.
 \begin{theorem}\label{realiab}
 Suppose that  $(\bm{u},p)$ is the weak solution
 satisfying (\ref{scm11a}) and that 
 $(\bm{u}_h,p_h)\in\bm{X}^h_E\times M^h$  is {a conforming} mixed approximation 
 satisfying (\ref{FEA11}).   Defining  $\eta$ and $\Theta$ to be the
 error estimator and the data oscillation term in (\ref{errest1}), we
have an upper bound on the approximation error,
   \begin{align}
     |||(\bm{u}-\bm{u}_h, p-p_h)|||\lesssim \eta +\Theta.
   \end{align}   
 \end{theorem}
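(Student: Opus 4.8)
The plan is the classical residual argument, carried out while tracking every constant so that the Lam\'e-coefficient weights hidden in $\rho_K,\rho_E,\rho_d$ and in the energy norm (\ref{enorm}) are seen to cancel. Write $(\bm{e}_u,e_p):=(\bm{u}-\bm{u}_h,\,p-p_h)$. The first, and crucial, ingredient is a \emph{robust stability} estimate for $\mathcal{B}$ on $\bm{H}^1_{E_0}\times M$ with respect to the energy norm: there is a constant $\gamma>0$, independent of $\mu$ and $\lambda$, such that
\begin{align*}
  |||(\bm{w},r)|||\;\le\;\frac1\gamma\,\sup_{(\bm{v},q)\in\bm{H}^1_{E_0}\times M}\,\frac{\mathcal{B}(\bm{w},r;\bm{v},q)}{|||(\bm{v},q)|||}\qquad\text{for all }(\bm{w},r)\in\bm{H}^1_{E_0}\times M.
\end{align*}
This is obtained by combining Korn's inequality (coercivity of $a$ on $\bm{H}^1_{E_0}$) with the $\lambda$-independent inf--sup condition for $b$ on $\bm{H}^1_{E_0}\times M$ invoked in the remarks following (\ref{scm12}), and then checking that the parameter weights appearing in $|||\cdot|||$ are compatible with both, so that the resulting $\gamma$ is parameter-free. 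This is the one place where the incompressible limit $\nu=\tfrac12$ must be handled with care, and I regard establishing it (with Lam\'e-free constants) as the main obstacle of the analysis. Granting it and taking $(\bm{w},r)=(\bm{e}_u,e_p)$, the theorem reduces to proving $\mathcal{B}(\bm{e}_u,e_p;\bm{v},q)\lesssim(\eta+\Theta)\,|||(\bm{v},q)|||$ for every test pair.

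Next I would derive the residual representation. Subtracting (\ref{FEA11}) from (\ref{scm11a}) and testing with $(\bm{v},q)\in\bm{H}^1_{E_0}\times M$ gives $\mathcal{B}(\bm{e}_u,e_p;\bm{v},q)=f(\bm{v})-\mathcal{B}(\bm{u}_h,p_h;\bm{v},q)$. Integrating by parts element-by-element in the $a(\bm{u}_h,\cdot)+b(\cdot,p_h)$ contribution, collecting the interior-edge tractions into the jumps of (\ref{stressjump_def}) (the Neumann edges retaining the full discrete traction, the Dirichlet edges dropping out since $\bm{v}|_{\Gamma_D}=\bm{0}$), and splitting $\bm{f}=\bm{f}_h+(\bm{f}-\bm{f}_h)$, one arrives at
\begin{align*}
  \mathcal{B}(\bm{e}_u,e_p;\bm{v},q)=\sum_{K\in\mathcal{T}_h}\int_K\bm{R}_K\cdot\bm{v}+\sum_{E\in\mathcal{E}_h}\int_E\bm{R}_E\cdot\bm{v}+\sum_{K\in\mathcal{T}_h}\int_K(\bm{f}-\bm{f}_h)\cdot\bm{v}+\sum_{K\in\mathcal{T}_h}\int_K R_K\,q .
\end{align*}
The first equation of (\ref{FEA11}) gives $a(\bm{e}_u,\bm{v}_h)+b(\bm{v}_h,e_p)=0$ for all $\bm{v}_h\in\bm{X}^h_0$; equivalently, the right-hand side above vanishes when $(\bm{v},q)$ is replaced by $(\bm{v}_h,0)$. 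Hence I may subtract $\bm{v}_h:=\mathcal{I}_h\bm{v}\in\bm{X}^h_0$ from $\bm{v}$ in the first three sums, where $\mathcal{I}_h$ is a Cl\'ement/Scott--Zhang quasi-interpolant respecting the homogeneous Dirichlet condition on $\Gamma_D$, with the local bounds $||\bm{v}-\mathcal{I}_h\bm{v}||_{0,K}\lesssim h_K||\nabla\bm{v}||_{0,\widetilde{K}}$ and $||\bm{v}-\mathcal{I}_h\bm{v}||_{0,E}\lesssim h_E^{1/2}||\nabla\bm{v}||_{0,\widetilde{E}}$ on the usual finitely overlapping element and edge patches $\widetilde{K},\widetilde{E}$; the pressure-residual sum is left untouched.

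Finally I would bound the four sums by the Cauchy--Schwarz inequality and insert the definitions (\ref{gridparams}) together with (\ref{components}). From $h_K=2(2\mu)^{1/2}\rho_K$ one has $h_K||\bm{R}_K||_{0,K}=2(2\mu)^{1/2}\eta_{R_K}$ and $h_K||\bm{f}-\bm{f}_h||_{0,K}=2(2\mu)^{1/2}\Theta_K$, so the element-residual and data-oscillation sums are $\lesssim\bigl(\sum_K\eta_{R_K}^2\bigr)^{1/2}(2\mu)^{1/2}||\nabla\bm{v}||_0$ and $\lesssim\Theta\,(2\mu)^{1/2}||\nabla\bm{v}||_0$; from $h_E^{1/2}=\sqrt2\,(2\mu)^{1/2}\rho_E^{1/2}$ the edge sum is $\lesssim\bigl(\sum_E\rho_E||\bm{R}_E||_{0,E}^2\bigr)^{1/2}(2\mu)^{1/2}||\nabla\bm{v}||_0\lesssim\bigl(\sum_K\eta_{E_K}^2\bigr)^{1/2}(2\mu)^{1/2}||\nabla\bm{v}||_0$ (each edge lies in at least one and at most two elements); and, using $||R_K||_{0,K}=\rho_d^{-1/2}\eta_{J_K}$, the pressure-residual sum is $\le\sum_K\rho_d^{-1/2}\eta_{J_K}||q||_{0,K}\le\bigl(\sum_K\eta_{J_K}^2\bigr)^{1/2}\rho_d^{-1/2}||q||_0$. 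Since $2\mu||\nabla\bm{v}||_0^2\le|||(\bm{v},q)|||^2$ and $\rho_d^{-1}||q||_0^2=(\lambda^{-1}+(2\mu)^{-1})||q||_0^2\le|||(\bm{v},q)|||^2$, summing the four bounds yields $\mathcal{B}(\bm{e}_u,e_p;\bm{v},q)\lesssim(\eta+\Theta)\,|||(\bm{v},q)|||$, and the stability estimate of the first step then gives $|||(\bm{e}_u,e_p)|||\lesssim\eta+\Theta$. Everything past the first step is routine once the weights are accounted for; the genuine difficulty is the parameter-robust stability estimate.
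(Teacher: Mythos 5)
Your proposal is correct and follows essentially the same route as the paper: the parameter-robust stability of $\mathcal{B}$ in the energy norm (the paper's Lemma \ref{Sinsuplem12}, proved exactly as you sketch by combining Korn coercivity with the inf--sup condition for $b$ and balancing the weights), then Galerkin orthogonality, elementwise integration by parts into the residuals $\bm{R}_K$, $R_K$, $\bm{R}_E$ plus data oscillation, a Cl\'ement quasi-interpolant for $\bm{v}$ only, and weighted Cauchy--Schwarz using the scalings hidden in $\rho_K,\rho_E,\rho_d$. The only difference is presentational: the paper states the stability step as an explicit test-function construction $(\bm{v},q)=(\bm{u}-\delta\bm{v},-p)$ rather than as the abstract inf-sup bound you invoke, but the two are equivalent and your bookkeeping of the $\mu$- and $\lambda$-dependent weights matches the paper's.
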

The second theorem identifies a lower bound on the error and shows
 the efficiency of the error estimator.
 \begin{theorem}\label{efficie}
 Suppose that  $(\bm{u},p)$ is the weak solution
 satisfying (\ref{scm11a}) and that 
 $(\bm{u}_h,p_h)\in\bm{X}^h_E\times M^h$  is {a conforming}  mixed approximation 
 satisfying (\ref{FEA11}). Defining  $\eta$ and $\Theta$ to be the
 error estimator and the data oscillation term in (\ref{errest1}), we have a lower bound on the approximation error,
      \begin{align}\label{elowerbd}
     \eta\lesssim\, |||(\bm{u}-\bm{u}_h, p-p_h)||| +\Theta.
   \end{align}   
 \end{theorem}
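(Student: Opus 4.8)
The plan is to prove the local efficiency bound element by element and then sum, following the classical bubble-function technique of Verf\"urth adapted to the weighted energy norm \eqref{enorm}. For a fixed element $K$, I would first treat the interior residual term $\eta_{R_K}$. Let $b_K$ denote the standard interior bubble function supported on $K$ (a product of barycentric-type coordinates for the rectangle), and set $\bm{w}_K = \rho_K^2\, b_K\, \bm{R}_K^h$, where $\bm{R}_K^h$ is a polynomial approximation to $\bm{R}_K$ obtained by replacing $\bm{f}$ by $\bm{f}_h$. Using the norm equivalence $\|b_K^{1/2} \bm{R}_K^h\|_{0,K} \eqsim \|\bm{R}_K^h\|_{0,K}$ on the finite-dimensional polynomial space, together with the inverse estimate $\|\nabla \bm{w}_K\|_{0,K} \lesssim h_K^{-1}\|\bm{w}_K\|_{0,K}$, and integrating by parts in the first equation of \eqref{scm11a} tested against $\bm{w}_K$ (which is admissible since $\bm{w}_K \in \bm{H}^1_{E_0}$ with support in $K$), I would obtain
\begin{align*}
\|\bm{R}_K^h\|_{0,K}^2 \lesssim \int_K \bm{R}_K^h \cdot \bm{w}_K \lesssim \left( \mu^{1/2}\|\nabla(\bm{u}-\bm{u}_h)\|_{0,K} + \mu^{-1/2}\|p-p_h\|_{0,K} \right) \rho_K^{-1} \|\bm{R}_K^h\|_{0,K},
\end{align*}
where the weights $\rho_K = h_K(2\mu)^{-1/2}/2$ have been used to absorb the powers of $h_K$ and $\mu$ correctly. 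Rearranging and adding the data oscillation $\Theta_K$ to pass from $\bm{R}_K^h$ back to $\bm{R}_K$ gives $\eta_{R_K} \lesssim |||(\bm{u}-\bm{u}_h, p-p_h)|||_K + \Theta_K$.

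Next I would handle the edge residual $\eta_{E_K}$ with the edge-bubble technique. For an interior edge $E$ shared by elements $K_1, K_2$, let $b_E$ be the edge bubble supported on $\omega_E = K_1 \cup K_2$, extend the (polynomial) jump $\bm{R}_E$ to $\omega_E$ constantly along the direction transversal to $E$, and test \eqref{scm11a} against $\bm{w}_E = \rho_E\, b_E\, \bm{R}_E$. Integration by parts now produces the edge jump term plus interior residual contributions on $\omega_E$; the latter are controlled by the already-estimated $\eta_{R_{K_i}}$ terms. The scaled trace/inverse inequalities $\|b_E^{1/2}\bm{R}_E\|_{0,E} \eqsim \|\bm{R}_E\|_{0,E}$, $\|\bm{w}_E\|_{0,\omega_E} \lesssim h_E^{1/2}\|\bm{R}_E\|_{0,E}$, and $\|\nabla \bm{w}_E\|_{0,\omega_E} \lesssim h_E^{-1/2}\|\bm{R}_E\|_{0,E}$, combined with the weight $\rho_E = h_E(2\mu)^{-1}/2$, yield $\eta_{E_K} \lesssim |||(\bm{u}-\bm{u}_h,p-p_h)|||_{\omega_E} + \Theta_{\omega_E}$. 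Boundary edges on $\Gamma_N$ are analogous with $\omega_E$ a single element, and edges on $\Gamma_D$ contribute nothing.

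Finally, the term $\eta_{J_K}^2 = \rho_d \|R_K\|_{0,K}^2$ is immediate: since $R_K = \nabla\cdot\bm{u}_h + \lambda^{-1}p_h$ and the continuous solution satisfies $\nabla\cdot\bm{u} + \lambda^{-1}p = 0$ pointwise, we have $R_K = \nabla\cdot(\bm{u}_h-\bm{u}) + \lambda^{-1}(p_h-p)$, so by the triangle inequality $\|R_K\|_{0,K} \lesssim \|\nabla(\bm{u}-\bm{u}_h)\|_{0,K} + \lambda^{-1}\|p-p_h\|_{0,K}$; multiplying by $\rho_d = 1/(\lambda^{-1}+(2\mu)^{-1}) \le 2\mu$ and $\rho_d \le \lambda$ and checking the arithmetic against the three terms of \eqref{enorm} shows $\eta_{J_K} \lesssim |||(\bm{u}-\bm{u}_h,p-p_h)|||_K$. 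Squaring all the local bounds, summing over $K\in\mathcal{T}_h$, and using finite overlap of the patches $\omega_E$ gives \eqref{elowerbd}. The main obstacle is bookkeeping the powers of $\mu$ (and $\lambda$ in the $\eta_{J_K}$ term): one must verify that each occurrence of $\mu^{\pm 1/2}$ produced by the bubble-function arguments is exactly matched by the definition of $\rho_K$, $\rho_E$, $\rho_d$ so that the constants genuinely do not degenerate as $\lambda\to\infty$; everything else is the standard Verf\"urth machinery.
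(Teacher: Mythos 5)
Your proposal is correct and follows essentially the same route as the paper: interior and edge bubble functions with the weighted scalings $\rho_K$, $\rho_E$ to bound $\eta_{R_K}$ and $\eta_{E_K}$ (picking up the data oscillation from $\bm{f}-\bm{f}_h$), and the direct triangle-inequality argument with $\rho_d\le 2\mu$, $\rho_d\le\lambda$ for $\eta_{J_K}$, then summing with finite patch overlap. This matches the paper's Lemmas on the equilibrium, mass-conservation and stress-jump residuals almost step for step.
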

 
\subsection{Preliminary results}\label{proofreeft}

In this section, we establish a few technical results that are needed for the proofs of Theorems~\ref{realiab} and~\ref{efficie}. First, we recall the following well known estimates:
 \begin{gather}
 a(\bm{v},\bm{v})\ge C_K 2\mu \, { ||\nabla \bm{v} ||_0^2} \quad \forall \bm{v}\in {\bm{H}^1_{E_0}},  
 \label{aell} \\
\inf_{0\neq {q}\in {M}} \sup_{0\neq\bm{v}\in {\bm{H}^1_{E_0}}}
\frac{b(\bm{v},q)}{ ||\nabla\bm{v}||_0  ||q||_{0}}\ge C_{\Omega},
\label{binfsup}\\
 a(\bm{u},\bm{v})\le  {2\mu} \, { ||\nabla \bm{u} ||_0}  \,
  \, { ||\nabla \bm{v} ||_0} \quad \forall \bm{u},\bm{v}\in
 {\bm{H}^1_{E_0}}. \label{abd}
 \end{gather}
 The first estimate is  a direct consequence of  Korn's inequality and is discussed by Brenner \cite{SB} and Brenner \& Sung~\cite{SCL}. The second can be found in Girault \& Raviart~\cite{VGPAR}, and the third follows directly from the Cauchy--Schwarz inequality (using the definition of the Frobenius norm combined with  Young's inequality for products). The stability of the weak formulation (\ref{scm12}), independent of the Lam\'e coefficients, can now readily be established as a consequence of these estimates.
\begin{lemma}\label{Sinsuplem12}
For any $(\bm{u},p)\in \bm{H}^1_{E_0}\times {M}$, there exists a pair of functions $(\bm{v},q)\in \bm{H}^1_{E_0}\times {M}$, with  $|||(\bm{v},q)|||\lesssim |||(\bm{u},p)|||$, satisfying
$$ \mathcal{B}(\bm{u},p; \bm{v},q)\gtrsim |||(\bm{u},p)|||^{{2}}. $$
\end{lemma}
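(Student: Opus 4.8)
The plan is to use the classical two-step testing strategy for mixed saddle-point problems, combined with a $\mu$-dependent rescaling of the inf--sup velocity chosen so that all constants come out independent of $\mu$ and $\lambda$.

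First I would test with the diagonal choice $(\bm{v},q)=(\bm{u},-p)$. The mixed terms $b(\bm{u},p)$ cancel, leaving $\mathcal{B}(\bm{u},p;\bm{u},-p)=a(\bm{u},\bm{u})+c(p,p)$, which by Korn's inequality (\ref{aell}) and the definition of $c$ is bounded below by $C_K\,2\mu\|\nabla\bm{u}\|_{0}^{2}+\lambda^{-1}\|p\|_{0}^{2}$. This already controls two of the three terms in $|||(\bm{u},p)|||^{2}$; the one that is missing is $(2\mu)^{-1}\|p\|_{0}^{2}$. To recover it I would invoke the inf--sup condition (\ref{binfsup}): given $p$, there is $\tilde{\bm{w}}\in\bm{H}^1_{E_0}$ with $b(\tilde{\bm{w}},p)=\|p\|_{0}^{2}$ and $\|\nabla\tilde{\bm{w}}\|_{0}\le C_\Omega^{-1}\|p\|_{0}$. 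The key robustness point is to rescale: set $\bm{w}=(2\mu)^{-1}\tilde{\bm{w}}$, so that $b(\bm{w},p)=(2\mu)^{-1}\|p\|_{0}^{2}$ and $\|\nabla\bm{w}\|_{0}\le C_\Omega^{-1}(2\mu)^{-1}\|p\|_{0}$.

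Then I test with $(\bm{v},q)=(\bm{u}+\theta\bm{w},-p)$ for a small parameter $\theta>0$ to be fixed. Expanding and using the same cancellation of $b(\bm{u},p)$ gives
\[
\mathcal{B}(\bm{u},p;\bm{u}+\theta\bm{w},-p)=a(\bm{u},\bm{u})+\theta\,a(\bm{u},\bm{w})+\theta\,b(\bm{w},p)+c(p,p).
\]
The term $\theta\,b(\bm{w},p)=\theta(2\mu)^{-1}\|p\|_{0}^{2}$ supplies exactly the missing contribution, while the cross term is estimated by the continuity bound (\ref{abd}) and Young's inequality in the form $\|\nabla\bm{u}\|_{0}\|p\|_{0}\le\frac{1}{2\epsilon}\,2\mu\|\nabla\bm{u}\|_{0}^{2}+\frac{\epsilon}{2}(2\mu)^{-1}\|p\|_{0}^{2}$; the factor $(2\mu)^{-1}$ built into $\bm{w}$ is precisely what makes this estimate free of $\mu$. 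Choosing $\epsilon\sim C_\Omega$ and then $\theta$ small enough (depending only on $C_K$ and $C_\Omega$) absorbs the cross term into the two coercive terms and yields $\mathcal{B}(\bm{u},p;\bm{v},q)\gtrsim 2\mu\|\nabla\bm{u}\|_{0}^{2}+(2\mu)^{-1}\|p\|_{0}^{2}+\lambda^{-1}\|p\|_{0}^{2}=|||(\bm{u},p)|||^{2}$.

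Finally I would check the norm bound on the test pair. Since $q=-p$, the pressure part of $|||(\bm{v},q)|||$ equals that of $|||(\bm{u},p)|||$ identically; for the velocity part, the triangle inequality together with $\|\nabla\bm{w}\|_{0}\le C_\Omega^{-1}(2\mu)^{-1}\|p\|_{0}$ gives $2\mu\|\nabla(\bm{u}+\theta\bm{w})\|_{0}^{2}\lesssim 2\mu\|\nabla\bm{u}\|_{0}^{2}+(2\mu)^{-1}\|p\|_{0}^{2}\le|||(\bm{u},p)|||^{2}$, hence $|||(\bm{v},q)|||\lesssim|||(\bm{u},p)|||$. The only real subtlety — and the step I expect to need the most care — is bookkeeping the powers of $2\mu$ in the rescaled inf--sup function and in Young's inequality so that neither $\mu$ nor $\lambda$ ever enters the final constants; the rest is routine.
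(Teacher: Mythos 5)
Your proposal is correct and follows essentially the same route as the paper: a diagonal test $(\bm{u},-p)$ controlled by Korn coercivity (\ref{aell}), plus a $(2\mu)^{-1}$-rescaled inf--sup velocity from (\ref{binfsup}) added with a small parameter, with the cross term absorbed by the continuity bound (\ref{abd}) and Young's inequality and the test-pair norm checked by the triangle inequality. The only differences are cosmetic (sign convention for the inf--sup function and the names of the parameters $\theta,\epsilon$ versus the paper's $\delta,\epsilon$), and your constants, like the paper's, depend only on $C_K$ and $C_\Omega$.
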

\begin{proof} 
{First, since $p\in M=L^2(\Omega)$, a consequence of the continuous inf-sup condition (\ref{binfsup})
is that there exists a function} $\bm{v}\in {\bm{H}^1_{E_0}}$ satisfying
\begin{align*}
({p},\nabla\cdot\bm{v})\ge C_{\Omega} (2\mu)^{-1}||p||^2_{0},
\quad (2\mu)^{1/2}||\nabla\bm{v}||_0\le (2\mu)^{-1/2}||p||_{0},
\end{align*}
where $C_\Omega>0$ is {the  inf-sup constant.}  
 {Since $ \bm{u}\in \bm{H}^1_{E_0}$, (\ref{abd}) implies  that}
\begin{align}\label{infsup2}
\mathcal{B}(\bm{u},p; {-\bm{v}},0) &\ge C_{\Omega}(2\mu)^{-1}||p||^2_{0}
- {(2\mu)^{1/2}} { ||\nabla \bm{u} ||_0} \;(2\mu)^{1/2} { ||\nabla \bm{v} ||_0},\nonumber\\
&\ge C_{\Omega}(2\mu)^{-1}||p||^2_{0}- {(2\mu)^{1/2}} { ||\nabla \bm{u} ||_0} \;(2\mu)^{-1/2}||p||_0, \nonumber\\
&\ge \Bigg(C_{\Omega}-\frac{1}{\epsilon}\Bigg)(2\mu)^{-1}||p||^2_{0}- {\epsilon(2\mu)} { ||\nabla \bm{u} ||^2_0},
\end{align} 
{for al}l $\epsilon> 0$.
{Second, the coercivity estimate (\ref{aell}) gives the following  bound}
\begin{align}\label{infsup1}
\mathcal{B}(\bm{u},p;\bm{u},-p)\ge C_{K} 2\mu \, {||\nabla \bm{u} ||^2_0} +\frac{1}{\lambda}||p||^2_0,
\end{align}
where $C_K$ is {the}  Korn constant.

{Next, introducing  a parameter $\delta$ and combining  (\ref{infsup2}) and (\ref{infsup1}) gives}
\begin{align*}
\mathcal{B}(\bm{u},p;\bm{u} {- \delta \bm{v}},-p)
&=\mathcal{B}(\bm{u},p;\bm{u},-p)+\delta\mathcal{B}(\bm{u},p; {-\bm{v}},0)\\
&\ge C_K 2\mu  \, { ||\nabla \bm{u} ||^2_0} + \! \frac{1}{\lambda}||p||^2_0
+\!\delta \Bigg(\!C_{\Omega}\!-\!\frac{1}{\epsilon}\Bigg)(2\mu)^{-1}||p||^2_{0} \!-\! {\delta\epsilon}\, (2\mu) { ||\nabla \bm{u} ||^2_0}\\
&\ge (C_K- {\delta\epsilon}) 2\mu \, { ||\nabla \bm{u} ||^2_0}
+\Bigg(\frac{1}{\lambda}+\delta \Bigg(C_{\Omega}-\frac{1}{\epsilon}\Bigg)(2\mu)^{-1}\Bigg)||p||^2_{0}.
\end{align*}
{Making specific choices of parameters}
 $\epsilon =2/C_\Omega$, $\delta=C_\Omega C_K/{4}$, {leads to the required estimate
 with $\bm{v}:=  \bm{u} -\delta  \bm{v}$ and $q:=-p$,}
\begin{align}\label{infsup11}
\mathcal{B}(\bm{u},p;\bm{u}{-\delta\bm{v}},-p)
&\ge \min\Bigg\{1, \frac{C_K}{2},{\frac{ C_K C_\Omega^2}{8}} \Bigg\}   |||(\bm{u},p)|||^2.
\end{align}
{To complete the proof,  we note that}
\begin{align*}
{2\mu\,  {|| \nabla  \bm{u} -\delta \nabla \bm{v}} ||^2_{0}}  &\leq
2 \cdot 2\mu \,  {|| \nabla  \bm{u}  ||^2_{0}  +
{2 \delta^2} \cdot 2\mu \,  {|| \nabla \bm{v}} ||^2_{0}} \nonumber\\
&\leq
{2 (2\mu)}\,  {|| \nabla  \bm{u}  ||^2_{0}} +
{{2 \delta^2} \cdot (2\mu)^{-1} }\,  {|| p ||^2_{0}} ,
\end{align*}
{which leads to the upper bound, }
\begin{align}\label{infsup21}
|||(\bm{u} {-\delta\bm{v}},-p)|||^2  &= 
{2\mu\,  {|| \nabla  \bm{u} -\delta \nabla \bm{v}} ||^2_{0}}  +(2\mu)^{-1}||p||^2_{0}+\lambda^{-1}||p||^2_{0} \nonumber\\
&\le { \Bigg(2+\frac{C_K^2 C_\Omega^2}{8} \Bigg) |||(\bm{u},p)|||^2}.
\end{align}
The constants  in  (\ref{infsup11}) and  (\ref{infsup21}) are independent of  the Lam\'e coefficients.
\end{proof}

\smallskip 

\begin{lemma}[Cl\'{e}ment interpolation estimate]\label{approxlem11}
Given $\bm{v}\in {{\bm{H}^1_{E_0}}}$, let $\bm{v}_h\in \bm{X}^h_0 $ be the quasi-interpolant of $\bm{v}$ 
{defined by averaging as discussed in  Cl\'{e}ment~\cite{CLA}}. For any $K\in\mathcal{T}_h$ {we have}
 \begin{align*}
 \rho^{-1}_K||\bm{v}-\bm{v}_h||_{0,K}&\lesssim (2\mu)^{1/2} |\bm{v}|_{1,\omega_K},
 \end{align*}
 {where  $|\cdot |_{1,\omega_K}$ is the  $H^1(\omega_K)$ seminorm. Moreover,}  for all $E\in\partial K$  {we have}
 \begin{align*}
  \rho^{-1/2}_E||\bm{v}-\bm{v}_h||_{0,E}&\lesssim (2\mu)^{1/2} |\bm{v}|_{1,\omega_K},
  \end{align*}
 where $\omega_K$ is the set of rectangles sharing at least one vertex with $K$.
 \end{lemma}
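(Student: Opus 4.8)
The plan is to establish the two stated bounds separately, using the standard machinery of Cl\'ement-type quasi-interpolation but keeping careful track of the $\mu$-dependence hidden in the parameters $\rho_K$ and $\rho_E$ defined in (\ref{gridparams}). Recall that $\rho_K = h_K(2\mu)^{-1/2}/2$ and $\rho_E = h_E(2\mu)^{-1}/2$, so that $\rho_K^{-1} = 2 h_K^{-1}(2\mu)^{1/2}$ and $\rho_E^{-1/2} = \sqrt{2}\, h_E^{-1/2}(2\mu)^{1/2}$. Thus the claimed inequalities are, after cancelling the common factor $(2\mu)^{1/2}$, exactly the classical local interpolation estimates
\begin{align*}
h_K^{-1}\|\bm{v}-\bm{v}_h\|_{0,K} \lesssim |\bm{v}|_{1,\omega_K}, \qquad
h_E^{-1/2}\|\bm{v}-\bm{v}_h\|_{0,E} \lesssim |\bm{v}|_{1,\omega_K},
\end{align*}
with constants depending only on the shape-regularity of $\{\mathcal{T}_h\}$. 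So the real content is just to invoke the well-known properties of the Cl\'ement operator and then reinstate the $\mu$ factors; there is no genuine dependence on the Lam\'e coefficients at all, which is the point of packaging them into $\rho_K$, $\rho_E$.

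Concretely, first I would recall from Cl\'ement~\cite{CLA} (or the standard references, e.g.\ the monographs cited earlier) that the averaging quasi-interpolant $\bm{v}_h \in \bm{X}^h_0$ satisfies the $W^{0,2}$-$H^1$ approximation estimate $\|\bm{v}-\bm{v}_h\|_{0,K} \lesssim h_K |\bm{v}|_{1,\omega_K}$ for each element $K$, where $\omega_K$ is the patch of elements sharing a vertex with $K$; this uses the fact that $\bm{v}_h$ reproduces piecewise-constants locally (indeed reproduces $\bm{X}^h_0$ functions) together with a Bramble--Hilbert / scaling argument on the reference element, and it requires shape regularity so that $\mathrm{diam}(\omega_K) \simeq h_K$ with a uniformly bounded number of elements per patch. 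Multiplying through by $\rho_K^{-1} = 2h_K^{-1}(2\mu)^{1/2}$ gives the first displayed inequality immediately. For the homogeneous Dirichlet boundary condition on $\Gamma_D$: since $\bm{g}$ (hence here $\bm{v}|_{\Gamma_D}=\bm 0$) is respected by the averaging construction on boundary nodes, $\bm{v}_h$ indeed lies in $\bm{X}^h_0$, so no issue arises there.

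For the edge estimate I would use the multiplicative trace (scaled trace) inequality on an element $K$ having $E$ as one of its edges: for $w\in H^1(K)$,
\begin{align*}
\|w\|_{0,E}^2 \lesssim h_E^{-1}\|w\|_{0,K}^2 + h_E |w|_{1,K}^2,
\end{align*}
again with a shape-regularity constant. Applying this with $w = \bm{v}-\bm{v}_h$ componentwise, and then inserting the two element-level Cl\'ement bounds $\|\bm{v}-\bm{v}_h\|_{0,K}\lesssim h_K|\bm{v}|_{1,\omega_K}$ and $|\bm{v}-\bm{v}_h|_{1,K}\lesssim |\bm{v}|_{1,\omega_K}$ (the latter being the $H^1$-stability estimate of the Cl\'ement operator), together with $h_E \simeq h_K$, yields $\|\bm{v}-\bm{v}_h\|_{0,E}^2 \lesssim h_E |\bm{v}|_{1,\omega_K}^2$. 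Multiplying by $\rho_E^{-1} = 2 h_E^{-1}(2\mu)$ and taking square roots gives the second displayed inequality.

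The only mild subtlety — and what I would flag as the main thing to be careful about rather than a true obstacle — is the bookkeeping of the patches: the $\omega_K$ appearing on the right of the edge estimate must be the patch of the \emph{same} element $K$ for which we invoke the bounds, and when $E$ is shared by two elements one picks either one (or unions the patches), absorbing the finite overlap into the hidden constant via shape regularity; one also checks that on $\Gamma_N$ (where $\bm{R}_E$ is nonzero) the trace inequality applies to the single adjacent element exactly as stated, while on $\Gamma_D$ the edge residual is defined to be zero so nothing is needed. Everything else is a direct citation of Cl\'ement's estimates plus the elementary rescaling by $\rho_K$ and $\rho_E$, so the proof is short.
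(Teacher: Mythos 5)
Your argument is correct and matches the paper's proof: both reduce the claim to the classical Cl\'ement estimates $\|\bm{v}-\bm{v}_h\|_{0,K}\lesssim h_K|\bm{v}|_{1,\omega_K}$ and $\|\bm{v}-\bm{v}_h\|_{0,E}\lesssim h_E^{1/2}|\bm{v}|_{1,\omega_K}$ and then simply reinstate the $(2\mu)^{1/2}$ factors via the definitions of $\rho_K$ and $\rho_E$ in (\ref{gridparams}). The only difference is cosmetic: you re-derive the edge estimate from the scaled trace inequality together with the $L^2$ bound and $H^1$-stability, whereas the paper cites it directly as a well-known quasi-interpolation estimate.
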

 
 {\it Proof.}
 The first quasi-interpolation estimate is well known,
\begin{align*}
||\bm{v}-\bm{v}_h||_{0,K}&\lesssim h_K |\bm{v}|_{1,\omega_K},
\end{align*}
so that, using (\ref{gridparams}), we get
\begin{align*}
\rho^{-1}_K||\bm{v}-\bm{v}_h||_{0,K}&\lesssim \rho^{-1}_K h_K |\bm{v}|_{1,\omega_K}\lesssim (2\mu)^{1/2} |\bm{v}|_{1,\omega_K}.
\end{align*}
The second quasi-interpolation estimate is also well known,
\begin{align*}
||\bm{v}-\bm{v}_h||_{0,E}&\lesssim h_E^{1/2} |\bm{v}|_{1,\omega_K},
\end{align*}
leading to the desired estimate
\begin{align*}
\rho^{-1/2}_E||\bm{v}-\bm{v}_h||_{0,E}&\lesssim \rho^{-1/2}_E h_E^{1/2}
|\bm{v}|_{1,\omega_K} \lesssim (2\mu)^{1/2} |\bm{v}|_{1,\omega_K}.\quad \endproof
\end{align*}

\noindent 
Using the above results we can now prove Theorems \ref{realiab} and \ref{efficie}.

 \subsection{Proof of Theorem \ref{realiab}}
 From Lemma \ref{Sinsuplem12}, we have
\begin{align*}
||||(\bm{u}-\bm{u}_h,p-p_h )|||^{2} \lesssim \mathcal{B}(\bm{u}-\bm{u}_h,p-p_h;\bm{v},q)
\end{align*}
with $|||(\bm{v},q)|||\le { |||(\bm{u} -\bm{u}_h,{p-p_h})||}| $.
Using (\ref{scm12}) and (\ref{FEA12}) {gives}
\begin{align}\label{rea11}
\mathcal{B}(\bm{u}-\bm{u}_h,p-p_h;\bm{v},q)&=\mathcal{B}(\bm{u}-\bm{u}_h,p-p_h;\bm{v}-\bm{v}_h,q),\nonumber\\
&=(\bm{f},\bm{v}-\bm{v}_h)-2\mu(\bm{\varepsilon}(\bm{u}_h),\bm{\varepsilon}(\bm{v}-\bm{v}_h))
     +  (p_h, \nabla\cdot (\bm{v}-\bm{v}_h))  \nonumber\\
& \quad { - (q, \nabla\cdot \bm{u}) + (q, \nabla\cdot \bm{u}_h) 
  - \frac{1}{\lambda}(q, p) + \frac{1}{\lambda}(q, p_h)},\nonumber\\
&=(\bm{f} - \bm{f}_h,\bm{v}-\bm{v}_h) \nonumber\\
& \quad
 + \! \sum_{K\in \mathcal{T}_h}\!
\Big\{ \big(\nabla \cdot (2\mu\bm{\varepsilon}(\bm{u}_h))
-  \nabla p_h + \bm{f}_h,(\bm{v} - \bm{v}_h) \big)_{{0,K}} \nonumber\\
&\quad {+} \! \sum_{E\in \partial K} \!\big \langle \bm{R}_E, \bm{v}-\bm{v}_h \big\rangle_E
 + \big(q, \nabla\cdot \bm{u}_h+\frac{1}{\lambda}p_h\big)_{{0,K}} \Big\} 
\end{align}
where $ {\big\langle} \bm{R}_E, \bm{v}-\bm{v}_h {\big\rangle}_E=\int_E \bm{R}_E\cdot (\bm{v}-\bm{v}_h) $. 
{Applying Cauchy--Schwarz to (\ref{rea11}) gives}
\begin{align}\label{rea12}
& \mathcal{B}(\bm{u}-\bm{u}_h,p-p_h;\bm{v},q)
\nonumber \\&\qquad\leq 
C \; \Bigg\{ \Bigg(\sum_{K\in \mathcal{T}_h}\rho_K^2||\bm{f}-\bm{f}_h||_{0,K}^2\Bigg)^{1/2}
\Bigg(\sum_{K\in \mathcal{T}_h}\rho_K^{-2}||\bm{v}-\bm{v}_h||_{0,K}^2\Bigg)^{1/2}
\nonumber \\
&\qquad\quad
+  \Bigg(\sum_{K\in \mathcal{T}_h}\rho_K^2|| \bm{R}_K ||_{0,K}^2\Bigg)^{1/2}
 \Bigg(\sum_{K\in \mathcal{T}_h}\rho_K^{-2}||\bm{v}-\bm{v}_h||_{0,K}^2\Bigg)^{1/2}\nonumber\\
&\qquad\quad+\Bigg(\sum_{K\in \mathcal{T}_h}\sum_{E\in \partial K}\rho_E||\bm{R}_E||_{0,E}^2\Bigg)^{1/2}
\Bigg(\sum_{K\in \mathcal{T}_h}\sum_{E\in \partial K}\rho_E^{-1}||\bm{v}-\bm{v}_h||_{0,E}^2\Bigg)^{1/2}\nonumber\\
&\qquad\quad+\Bigg(\sum_{K\in \mathcal{T}_h}\rho_d|| R_K||_{0,K}^2\Bigg)^{1/2}
\Bigg(\sum_{K\in \mathcal{T}_h}\rho^{-1}_d||q||_{0,K}^2\Bigg)^{1/2} \Bigg\}. 
\end{align}
{Using Lemma \ref{approxlem11}, then leads to the desired upper bound}
\begin{align}\label{rea13}
{ ||||(\bm{u}-\bm{u}_h,p-p_h )|||^{{2}}} & \lesssim \mathcal{B}(\bm{u}-\bm{u}_h,p-p_h;\bm{v},q) \nonumber\\
& \lesssim \; \Bigg\{ \Bigg(\sum_{K\in \mathcal{T}_h} {\Theta_K^2} \Bigg)^{1/2}
\Bigg(\sum_{K\in \mathcal{T}_h}\rho_K^{-2}||\bm{v}-\bm{v}_h||_{0,K}^2\Bigg)^{1/2}
\nonumber \\
&\qquad\quad
+  \Bigg(\sum_{K\in \mathcal{T}_h}{\eta^2_{ R_K} } \Bigg)^{1/2}
 \Bigg(\sum_{K\in \mathcal{T}_h}\rho_K^{-2}||\bm{v}-\bm{v}_h||_{0,K}^2\Bigg)^{1/2}\nonumber\\
&\qquad\quad+\Bigg(\sum_{K\in \mathcal{T}_h}  {\eta^2_{E_K} }  \Bigg)^{1/2}
\Bigg(\sum_{K\in \mathcal{T}_h}\sum_{E\in \partial K}\rho_E^{-1}||\bm{v}-\bm{v}_h||_{0,E}^2\Bigg)^{1/2}\nonumber\\
&\qquad\quad+\Bigg(\sum_{K\in \mathcal{T}_h} {\eta^2_{ J_K} } \Bigg)^{1/2}
\Bigg(\sum_{K\in \mathcal{T}_h}\rho^{-1}_d||q||_{0,K}^2\Bigg)^{1/2} \Bigg\}. \nonumber \\
&\lesssim \Bigg(\sum_{K\in \mathcal{T}_h}\!\!\Big\{2\mu \; |\bm{v}|_{1,K}^2
\!+\! \rho_d^{-1}||q||_{0,K}^2 \Big\}\Bigg)^{1\over 2}
\Bigg(\!\sum_{K\in \mathcal{T}_h} \!\!\Big(\eta_{K}^2+\Theta_K^2\Big)\!\Bigg)^{1\over 2}  \nonumber\\
& \lesssim  \; {  |||(\bm{v},q )||| \;
\Bigg(\sum_{K\in \mathcal{T}_h}\Big(\eta_{K}^2+\Theta_K^2\Big)\Bigg)^{1\over 2} }.
\end{align}
\endproof
\subsection{Proof of Theorem \ref{efficie}}
To establish the lower bound (\ref{elowerbd}), we now need to establish efficiency bounds for each of the component residual terms $\eta_{ R_K}$, $\eta_{ J_K}$ and $\eta_{E_K}$ defined in \eqref{components}. The method of proof is well known; see, for example~\cite{QLDS}.

Let $K$ be {an} element of $\mathcal{T}_h$ {and suppose that  $\chi_K$ is 
a (quartic)  interior bubble  function (positive in the interior of $K$, zero on $\partial K$).  
Then the  following estimates hold,  see Verf\"urth~\cite{RV}.}
\begin{align}
||\chi_K \bm{v}||_{0,K}&\lesssim ||\bm{v}||_{0,K}\lesssim ||\chi_K^{1/2}\bm{v}||_{0,K},  \label{bubblea}\\
||\nabla(\chi_K\bm{v})||_{0,K}&\lesssim h_K^{-1}||\bm{v}||_{0,K}, \label{bubbleb}
\end{align}
where $ \bm{v}$ denotes a vector-valued polynomial function {defined} on $K$. 
\begin{lemma}\label{efficie12}
Let $K$ be an element of $\mathcal{T}_h$. The local equilibrium residual satisfies
\begin{align*}
\eta^2_{R_K}&\lesssim \Big(2\mu \; |\bm{u}-\bm{u}_h|_{1,K}^2+ (2\mu)^{-1}||p-p_h||_{0,K}^2+\Theta_K^2\Big).
\end{align*}
\end{lemma}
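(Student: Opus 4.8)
The plan is to use the standard Verf\"urth bubble-function technique, localized to a single element $K$. First I would introduce the interior bubble $\chi_K$ on $K$ and test the residual equation against $\bm{w} := \chi_K \bm{R}_K$ (extended by zero outside $K$, so that $\bm{w} \in \bm{H}^1_{E_0}$ and is supported on $K$, with no edge contributions). By the lower bubble inequality in (\ref{bubblea}), $\|\bm{R}_K\|_{0,K}^2 \lesssim \|\chi_K^{1/2}\bm{R}_K\|_{0,K}^2 = (\bm{R}_K, \bm{w})_{0,K}$. On this last term I would add and subtract the true data $\bm{f}$ and use the fact that $\nabla\cdot(2\mu\bm{\varepsilon}(\bm{u})) - \nabla p + \bm{f} = 0$ in each element in a distributional sense, together with the weak form (\ref{scm11a}), to rewrite $(\bm{R}_K,\bm{w})_{0,K}$ as
\begin{align*}
(\bm{R}_K, \bm{w})_{0,K} = (\bm{f}_h - \bm{f}, \bm{w})_{0,K} + 2\mu(\bm{\varepsilon}(\bm{u}_h - \bm{u}), \bm{\varepsilon}(\bm{w}))_{0,K} - (p_h - p, \nabla\cdot\bm{w})_{0,K},
\end{align*}
where the integration by parts over $K$ produces no boundary term because $\bm{w}$ vanishes on $\partial K$.

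Next I would bound each of the three terms on the right by Cauchy--Schwarz. For the first, $|(\bm{f}_h - \bm{f}, \bm{w})_{0,K}| \le \|\bm{f} - \bm{f}_h\|_{0,K}\|\bm{w}\|_{0,K}$ and $\|\bm{w}\|_{0,K} = \|\chi_K \bm{R}_K\|_{0,K} \lesssim \|\bm{R}_K\|_{0,K}$ by the upper inequality in (\ref{bubblea}). For the second, $|2\mu(\bm{\varepsilon}(\bm{u}_h-\bm{u}),\bm{\varepsilon}(\bm{w}))_{0,K}| \le 2\mu\,|\bm{u}-\bm{u}_h|_{1,K}\,|\bm{w}|_{1,K}$, and the inverse-type estimate (\ref{bubbleb}) gives $|\bm{w}|_{1,K} = \|\nabla(\chi_K\bm{R}_K)\|_{0,K} \lesssim h_K^{-1}\|\bm{R}_K\|_{0,K}$. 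For the third, $|(p_h-p,\nabla\cdot\bm{w})_{0,K}| \le \|p-p_h\|_{0,K}\,|\bm{w}|_{1,K} \lesssim h_K^{-1}\|p-p_h\|_{0,K}\|\bm{R}_K\|_{0,K}$. Dividing through by $\|\bm{R}_K\|_{0,K}$ yields
\begin{align*}
\|\bm{R}_K\|_{0,K} \lesssim \|\bm{f}-\bm{f}_h\|_{0,K} + 2\mu\, h_K^{-1}|\bm{u}-\bm{u}_h|_{1,K} + h_K^{-1}\|p-p_h\|_{0,K}.
\end{align*}
Finally I would multiply by $\rho_K = h_K(2\mu)^{-1/2}/2$ and square, so that $\eta_{R_K}^2 = \rho_K^2\|\bm{R}_K\|_{0,K}^2$ picks up the factors: $\rho_K^2\|\bm{f}-\bm{f}_h\|_{0,K}^2 = \Theta_K^2$; $\rho_K^2 (2\mu)^2 h_K^{-2}|\bm{u}-\bm{u}_h|_{1,K}^2 \simeq 2\mu\,|\bm{u}-\bm{u}_h|_{1,K}^2$; and $\rho_K^2 h_K^{-2}\|p-p_h\|_{0,K}^2 \simeq (2\mu)^{-1}\|p-p_h\|_{0,K}^2$, giving exactly the claimed bound.

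The one point that needs care — and is really the crux of getting \emph{robust} (Lam\'e-independent) constants — is the scaling bookkeeping with $\rho_K$: one must check that the powers of $2\mu$ and $h_K$ produced by the inverse estimate (\ref{bubbleb}) combine with $\rho_K^2$ to reproduce precisely the weighting that appears in the energy norm (\ref{enorm}), namely a $2\mu$ weight on the displacement $H^1$-seminorm and a $(2\mu)^{-1}$ weight on the pressure $L^2$-norm. There is no genuine analytic obstacle here — the bubble estimates are quoted and the weak form is available — but the argument is only worth presenting if this dimensional/parametric matching is displayed explicitly, since that is the entire reason the lemma is stated with these particular weighted quantities on the right-hand side rather than generic mesh-dependent ones.
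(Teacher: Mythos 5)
Your proposal is correct and follows essentially the same route as the paper: test against an interior-bubble-weighted residual, use the elementwise exact equation, integrate by parts with no boundary term, and apply Cauchy--Schwarz with the bubble estimates (\ref{bubblea})--(\ref{bubbleb}), the only difference being that you carry the $\rho_K$ scaling separately at the end rather than building $\rho_K^2$ into $\bm{w}$ as the paper does. Note a harmless sign slip in your intermediate identity (integration by parts gives $-2\mu(\bm{\varepsilon}(\bm{u}_h-\bm{u}),\bm{\varepsilon}(\bm{w}))_{0,K}+(p_h-p,\nabla\cdot\bm{w})_{0,K}$), which is immaterial once absolute values are taken.
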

\begin{proof}
For each element $K$ in $\mathcal{T}_{h}$, we have
$ \bm{R}_K=(\bm{f}_{h}+\nabla\cdot(2\mu \bm{\varepsilon}(\bm{u}_{h}))-\nabla p_{h})|_K$. 
{Next, introducing  $\bm{w}|_K=\rho^{2}_K \bm{R}_K\, \chi_K $ and using (\ref{bubblea}) we have}
\begin{align*}
\eta^2_{R_K} =\rho^2_K ||\bm{R}_K||^2_{0,K} 
 \lesssim (\bm{R}_K,\rho^2_K \chi_K \bm{R}_K)_{K} 
=(\bm{f}_{h}+\nabla\cdot(2\mu \bm{\varepsilon}(\bm{u}_{h}))-\nabla p_{h},\bm{w})_{K}.
\end{align*}
{Noting that}
 $(\bm{f}+\nabla\cdot(2\mu \bm{\varepsilon}(\bm{u}))-\nabla p)|_K=0$ for {the exact solution} $(\bm{u},p)$, we simply subtract, then integrate by parts and note that $\bm{w}|_{\partial K}=\bm{0}$,  to give
\begin{align}\label{resef2}
\eta^2_{R_K}&= (2\mu\bm{\varepsilon}( \bm{u}-\bm{u}_{h}),\bm{\varepsilon}( \bm{w}))_{K}
+(p_{h}-p,\nabla \cdot \bm{w})_{K}+((\bm{f}_{h}-\bm{f}),\bm{w})_{K}.
\end{align}
{Applying  Cauchy--Schwarz to (\ref{resef2})  leads to the bound} 
\begin{align*}
\eta^2_{R_K}\lesssim &\Big(2\mu \: |  \bm{u}-\bm{u}_h|_{1,K}^2+ (2\mu)^{-1}||p-p_h||_{0,K}^2+\Theta_K^2\Big)^{1\over 2}
\Big(2\mu \: |\bm{w}|^{2}_{1,K}+\rho^{-2}_K||\bm{w}||^2_{0,K}\Big)^{1\over 2}.
\end{align*}
{Using  (\ref{bubblea}) and (\ref{bubbleb}) then gives the bound}
\begin{align*}
\eta^2_{R_K}&\lesssim \Big(2\mu \: |\bm{u}-\bm{u}_h|_{1,K}^2+ (2\mu)^{-1}||p-p_h||_{0,K}^2
+\Theta_K^2\Big)^{1/2}\Big(\eta^2_{R_K}\Big)^{1/2}
\end{align*}
{as required.}
\end{proof}
\begin{lemma}\label{efficieR2}
Let $K$ be an element of $\mathcal{T}_h$. The local mass conservation residual satisfies
\begin{align*}
\eta^2_{J_K}&\lesssim \Big(2\mu \; |\bm{u}-\bm{u}_h|_{1,K}^2+ (2\mu)^{-1}||p-p_h||_{0,K}^2
+ \lambda^{-1} ||p-p_h||_{0,K}^2\Big).
\end{align*}
\end{lemma}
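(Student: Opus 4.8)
The plan is to use the fact that the local mass-conservation residual $R_K=(\nabla\cdot\bm u_h+\lambda^{-1}p_h)|_K$ is exactly the elementwise residual of the second equation of the Herrmann system~(\ref{scm11b}), and that this equation is satisfied \emph{pointwise} (as an $L^2$ identity) by the weak solution: $\nabla\cdot\bm u+\lambda^{-1}p=0$. Subtracting this identity from the definition of $R_K$ gives
$$
R_K=\big(\nabla\cdot(\bm u_h-\bm u)+\lambda^{-1}(p_h-p)\big)\big|_K ,
$$
so, in contrast to the proof of Lemma~\ref{efficie12}, no interior bubble functions or inverse inequalities are needed here.

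Next I would apply the triangle inequality together with the elementary bound $\|\nabla\cdot\bm v\|_{0,K}\lesssim |\bm v|_{1,K}$ to obtain $\|R_K\|_{0,K}\lesssim |\bm u-\bm u_h|_{1,K}+\lambda^{-1}\|p-p_h\|_{0,K}$, and hence
$$
\eta^2_{J_K}=\rho_d\|R_K\|_{0,K}^2\lesssim \rho_d\,|\bm u-\bm u_h|_{1,K}^2+\rho_d\,\lambda^{-2}\|p-p_h\|_{0,K}^2 .
$$

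The only step requiring any care—and the step that makes the bound robust—is the bookkeeping of the weight $\rho_d=(\lambda^{-1}+(2\mu)^{-1})^{-1}$ from~(\ref{gridparams}). Since $\lambda^{-1}+(2\mu)^{-1}\ge(2\mu)^{-1}$ we have $\rho_d\le 2\mu$, which converts the first term into $2\mu\,|\bm u-\bm u_h|_{1,K}^2$; since $\lambda^{-1}+(2\mu)^{-1}\ge\lambda^{-1}$ we have $\rho_d\le\lambda$, so that $\rho_d\lambda^{-2}\|p-p_h\|_{0,K}^2\le\lambda^{-1}\|p-p_h\|_{0,K}^2$. Combining these yields $\eta^2_{J_K}\lesssim 2\mu\,|\bm u-\bm u_h|_{1,K}^2+\lambda^{-1}\|p-p_h\|_{0,K}^2$, which already implies the stated estimate (the $(2\mu)^{-1}\|p-p_h\|_{0,K}^2$ term on the right-hand side is not even required). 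I expect no genuine obstacle: this is the easiest of the three efficiency lemmas precisely because the relevant residual is affine in the errors, with no differential operator of positive order acting on the pressure.
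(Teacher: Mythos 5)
Your proposal is correct and follows essentially the same route as the paper's proof: subtract the exact identity $\nabla\cdot\bm{u}+\lambda^{-1}p=0$ elementwise, apply the triangle inequality with $\|\nabla\cdot\bm{v}\|_{0,K}\lesssim|\bm{v}|_{1,K}$, and absorb the weights via $\rho_d\le 2\mu$ and $\rho_d\le\lambda$ from the definition in (\ref{gridparams}). Your observation that no bubble functions are needed and that the $(2\mu)^{-1}\|p-p_h\|_{0,K}^2$ term is not required matches the paper's argument exactly.
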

\begin{proof}
Noting that $(\nabla\cdot \bm{u}+\frac{1}{\lambda} p)|_K=0$ for a classical solution $(\bm{u},p)$, we have
\begin{align}\label{resefd5}
\rho_d||\nabla\cdot \bm{u}_{h}+\frac{1}{\lambda}p_h||_{0,K}^2&=\rho_d||\nabla\cdot (\bm{u}-\bm{u}_{h})+\frac{1}{\lambda}(p-p_h)||_{0,K}^2\\
&\lesssim \rho_d||\nabla\cdot (\bm{u} -\bm{u}_h ) \, ||_{0,K}^2+\frac{\rho_d}{\lambda^2}||( p-p_h )||_{0,K}^2\\
&\lesssim 2\mu \; |\bm{u} -\bm{u}_h |_{1,K}^2+ \frac{1}{\lambda}||(p-p_h)||_{0,K}^2,
\end{align}
{where the last line follows from the definition  of $\rho_d$ in \eqref{gridparams}.}
\end{proof}

Next, let $E$ denote an interior edge which is shared by two elements $K$ and $K^\prime$,
and suppose that $\chi_E$ is a polynomial bubble function on $E$
 (positive in the interior of  the patch $\omega_E$ formed by the union of $K$ and ${K}^\prime$ and zero on the 
 boundary of the patch).  The following estimates are well known; see Verf\"urth~\cite{RV},
\begin{align}\label{efficie13}
||\bm{v}||_{0,E}&\lesssim ||\chi_E^{1/2}\bm{v}||_{0,E}\\
||{\chi_E \bm{v}} ||_{0,K}&\lesssim h_E^{1/2}||\bm{v}||_{0,E} \quad\forall K\in\omega_{E}, \label{efficie14}\\
||{\nabla(\chi_E \bm{v})} ||_{0,K}&\lesssim h_E^{-1/2}||\bm{v}||_{0,E}\quad \forall K\in\omega_{E}.\label{efficie15}
\end{align}
{Here,} $\bm{v}$ is a vector-valued polynomial function {defined}  on $E$, and
 $\bm{v}_\chi={\chi_E \bm{v}}$ can be extended by zero outside of the patch.
\begin{lemma}\label{efficieED1}
Let $K$ be an element of $\mathcal{T}_h$. The stress jump residual satisfies
\begin{align*}
{\eta_{E_K}^2} \lesssim {\sum_{E\in \partial K} \left(2\mu\, |\bm{u}-\bm{u}_h|_{1,\omega_E}^2
+(2\mu)^{-1}||p-p_h||_{0,\omega_E}^2+\Theta_{\omega_E}^2 \right)},
\end{align*}
where $\Theta_{\omega_E}^2 = \sum_{K\in \omega_E}\Theta_K^2$ {is the localised data oscillation term}.
\end{lemma}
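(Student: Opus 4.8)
\emph{Proof plan.} My approach is the standard Verf\"urth edge-bubble argument, carried out edge by edge and with careful tracking of the Lam\'e-dependent weights. Since $\eta_{E_K}^2=\sum_{E\in\partial K}\rho_E\|\bm{R}_E\|_{0,E}^2$ by \eqref{components}, it suffices to prove, for each $E\in\partial K$,
\[
\rho_E\|\bm{R}_E\|_{0,E}^2\;\lesssim\;\sum_{K''\in\omega_E}\Big(2\mu\,|\bm{u}-\bm{u}_h|_{1,K''}^2+(2\mu)^{-1}\|p-p_h\|_{0,K''}^2+\Theta_{K''}^2\Big),
\]
and then sum over $E\in\partial K$, recognising the right-hand side as $2\mu|\bm{u}-\bm{u}_h|_{1,\omega_E}^2+(2\mu)^{-1}\|p-p_h\|_{0,\omega_E}^2+\Theta_{\omega_E}^2$. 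Of the three cases in \eqref{stressjump_def}, the Dirichlet case $\bm{R}_E=\bm{0}$ is trivial; I would treat an interior edge $E$ shared by $K$ and $K'$ in detail and note the Neumann edge is handled identically with $\omega_E=K$. Write $\sigmab_h:=2\mu\bm{\varepsilon}(\bm{u}_h)-p_h\bm{I}$, so that $\bm{R}_E=-\tfrac12\llbracket\sigmab_h\bm{n}\rrbracket_E$, and set $\bm{w}:=\chi_E\bm{R}_E$ (the polynomial $\bm{R}_E$ being extended off $E$ and then multiplied by the edge bubble), which vanishes on $\partial\omega_E$ and is extended by zero. By \eqref{efficie13}, $\|\bm{R}_E\|_{0,E}^2\lesssim\|\chi_E^{1/2}\bm{R}_E\|_{0,E}^2=\langle\bm{R}_E,\bm{w}\rangle_E$.

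The next step is an integration-by-parts identity. Integrating $\sigmab_h$ over $K$ and $K'$ (the contributions on $\partial\omega_E$ vanish because $\bm{w}$ does) gives $\langle\bm{R}_E,\bm{w}\rangle_E=-\tfrac12\sum_{K''\in\omega_E}\big[(\nabla\!\cdot\!\sigmab_h,\bm{w})_{K''}+(\sigmab_h,\bm{\varepsilon}(\bm{w}))_{K''}\big]$. For the exact solution, $\sigmab:=2\mu\bm{\varepsilon}(\bm{u})-p\bm{I}$ has continuous normal trace across $E$ and satisfies $-\nabla\!\cdot\!\sigmab=\bm{f}$, so the same integration by parts yields $\sum_{K''}(\sigmab,\bm{\varepsilon}(\bm{w}))_{K''}=\sum_{K''}(\bm{f},\bm{w})_{K''}$. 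Subtracting, using $\nabla\!\cdot\!\sigmab_h=\bm{R}_{K''}-\bm{f}_h$ on $K''$ and $\sigmab-\sigmab_h=2\mu\bm{\varepsilon}(\bm{u}-\bm{u}_h)-(p-p_h)\bm{I}$, I obtain
\[
\langle\bm{R}_E,\bm{w}\rangle_E=-\tfrac12\sum_{K''\in\omega_E}\Big[\big(\bm{R}_{K''}+(\bm{f}-\bm{f}_h),\,\bm{w}\big)_{K''}-\big(\sigmab-\sigmab_h,\,\bm{\varepsilon}(\bm{w})\big)_{K''}\Big].
\]

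Then I would apply Cauchy--Schwarz termwise and insert the bubble estimates \eqref{efficie14}--\eqref{efficie15}: $\|\bm{w}\|_{0,K''}\lesssim h_E^{1/2}\|\bm{R}_E\|_{0,E}$ and $|\bm{w}|_{1,K''}\lesssim h_E^{-1/2}\|\bm{R}_E\|_{0,E}$. Combining with the first paragraph and dividing by $\|\bm{R}_E\|_{0,E}$ gives $\|\bm{R}_E\|_{0,E}\lesssim\sum_{K''\in\omega_E}\big[h_E^{1/2}(\|\bm{R}_{K''}\|_{0,K''}+\|\bm{f}-\bm{f}_h\|_{0,K''})+h_E^{-1/2}\|\sigmab-\sigmab_h\|_{0,K''}\big]$, together with $\|\sigmab-\sigmab_h\|_{0,K''}\lesssim 2\mu|\bm{u}-\bm{u}_h|_{1,K''}+\|p-p_h\|_{0,K''}$. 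Multiplying by $\rho_E^{1/2}=(h_E(2\mu)^{-1}/2)^{1/2}$ and using shape regularity ($h_E\lesssim h_{K''}$) with \eqref{gridparams}: the factor $\rho_E^{1/2}h_E^{1/2}$ is $\lesssim\rho_{K''}$, turning the first two terms into $\eta_{R_{K''}}$ and $\Theta_{K''}$, while $\rho_E^{1/2}h_E^{-1/2}\simeq(2\mu)^{-1/2}$ turns the stress-error term into $(2\mu)^{1/2}|\bm{u}-\bm{u}_h|_{1,K''}+(2\mu)^{-1/2}\|p-p_h\|_{0,K''}$. Squaring, using that the number of elements in $\omega_E$ is uniformly bounded, and finally invoking Lemma~\ref{efficie12} to absorb $\eta_{R_{K''}}^2$, yields the per-edge bound; summing over $E\in\partial K$ finishes the proof. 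For $E\in\mathcal{E}_h\cap\Gamma_N$ the same steps apply with $\omega_E=K$, using $\sigmab\bm{n}=\bm{0}$ on $\Gamma_N$ in place of normal-trace continuity.

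The argument is structurally routine, so the main obstacle is the bookkeeping of the Lam\'e-dependent weights: one must check that the powers of $2\mu$ carried by $\rho_E^{1/2}$ combine with the $h_E^{\pm1/2}$ from the bubble inequalities and with $\|\sigmab-\sigmab_h\|_{0,K''}$ so as to reproduce \emph{exactly} the energy contributions $2\mu|\bm{u}-\bm{u}_h|_{1,\omega_E}^2$ and $(2\mu)^{-1}\|p-p_h\|_{0,\omega_E}^2$, with no residual $\mu$ or $\lambda$ dependence in the hidden constant. A secondary point is to keep the three cases of \eqref{stressjump_def} straight and to invoke the correct exact-solution identity (continuity of $\sigmab\bm{n}$ on interior edges, $\sigmab\bm{n}=\bm{0}$ on $\Gamma_N$) in each.
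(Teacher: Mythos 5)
Your proposal is correct and follows essentially the same route as the paper's proof: the Verf\"urth edge-bubble argument with integration by parts over the patch $\omega_E$, using the vanishing of the exact normal-stress jump and the strong form of the equilibrium equation to split the bound into element-residual, data-oscillation and stress-error contributions, which are then weighted via $\rho_E$, $\rho_K$ and absorbed with Lemma~\ref{efficie12}. The only differences are cosmetic (you keep the $\rho_E$ factor outside the bubble test function, whereas the paper builds it into $\Lambda$), so no further comment is needed.
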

\begin{proof} For each $K$, we have $\eta_{E_K}^2 = \sum_{E\in \partial K} \rho_E||\bm{R}_E||^2_{0,E}$.  Suppose $E$ is an interior edge and recall that the classical solution $(\bm{u},p)$ satisfies $ \llbracket (p{\bm{I}}-2\mu\bm{\varepsilon}(\bm{u}))\bm{n}\rrbracket|_E=0$. Now let $\chi_E$ be a polynomial bubble function associated with $E$ as above, and define the localised jump term
$ \Lambda=\rho_E \,
\llbracket (p_{h}{\bm{I}}-2\mu\bm{\varepsilon}(\bm{u}_{h}))\bm{n}\rrbracket \, \chi_E$. Using (\ref{efficie13}) and \eqref{stressjump_def} gives
\begin{align*}
\rho_E||\bm{R}_E||^2_{0,E}
&\lesssim
\big(\llbracket (p_{h}{\bm{I}}-2\mu\bm{\varepsilon}(\bm{u}_{h}))\bm{n}\rrbracket,\Lambda\big)_E \\
&= \big(\llbracket (p_{h}{\bm{I}}-2\mu\bm{\varepsilon}( \bm{u}_{h}))\bm{n}\rrbracket-\llbracket (p{\bm{I}}
-2\mu\bm{\varepsilon}( {\bm{u}} ))\bm{n}\rrbracket,\Lambda\big)_E.
\end{align*}
Integrating by parts over each  element in the patch $\omega_E$ then gives
\begin{align*}
\big(\llbracket (p_{h}{\bm{I}}-2\mu\bm{\varepsilon}(\bm{u}_{h}))\bm{n}\rrbracket
& -\llbracket (p{\bm{I}}-2\mu\bm{\varepsilon}(\bm{u}))\bm{n}\rrbracket,\Lambda \big)_E\\
&= \sum_{K\in{\omega}_E} \! \Big\{\int_{K} \!
\big\{- \! \nabla\cdot 2\mu (\bm{\varepsilon}(\bm{u})-\bm{\varepsilon}(\bm{u}_{h}) ) 
+\nabla (p-p_{h}) \big\} \, \cdot \,  \Lambda \\ 
&\quad+\int_{{K}} \! \big\{\! -2\mu (\bm{\varepsilon}(\bm{u})-\bm{\varepsilon}(\bm{u}_{h}))
+ (p-p_{h}) \, {\bm{I}} \big \} :\nabla\Lambda \Big\}   . 
\end{align*}
Now, since the { exact solution} $(\bm{u},p)$ satisfies $-\nabla \cdot 2 \mu\bm{\varepsilon}(\bm{u}) + \nabla p{|_K} = \bm{f}{|_K}$, we have
\begin{align*}
\rho_E||\bm{R}_E||^2_{0,E}
&\lesssim \sum_{K\in\omega_E}\int_{K}  \big\{ \bm{f}_{h}
+\nabla\cdot(2\mu\bm{\varepsilon}(\bm{u}_{h}))-\nabla p_{h} \big\} \, \cdot\, \Lambda  \\ 
&\quad +\sum_{K\in\omega_E}\int_{K}(\bm{f}-\bm{f}_{h})\cdot \Lambda \\ 
&\quad +\sum_{K\in\omega_E}\int_{K} \big\{ \! -2\mu (\bm{\varepsilon}(\bm{u})-\bm{\varepsilon}(\bm{u}_{h}))
+ (p-p_{h}) \, {\bm{I}} \big\} : \nabla\Lambda \\
&\lesssim T_1+T_2+T_3.
\end{align*} 
These three terms will be bounded separately. 

First, using the definition of $\bm{R}_{K}$, and then combining the Cauchy--Schwarz inequality with Lemma \ref{efficie12}, gives
\begin{align*}
T_1\lesssim \Big( 2\mu \; | \bm{u}-\bm{u}_h|_{1,\omega_E}^2+(2\mu)^{-1}||p-p_h||_{0,\omega_E}^2
+ {\Theta_{\omega_E}^2} \Big)^{1/2}
\left(\sum_{K\in\omega_E} \rho_{K}^{-2} ||\Lambda||^{2}_{0,K}\right)^{1/2}.
\end{align*}  
Next, given  the shape regularity of the grid, using the definition of $\Lambda$ and (\ref{efficie14}) gives
$$ \rho_{K}^{-2} ||\Lambda||^{2}_{0,K}  \lesssim  \rho_{E}^{-1} h_E^{-1}  ||\Lambda||^{2}_{0,K} \lesssim  \rho_{E}^{-1} || \, \rho_{E}  \, \bm{R}_E \,  ||^2_{0,E} .$$
Hence, the following estimate holds
 \begin{align*}
T_1\lesssim \Big( 2\mu|\bm{u}-\bm{u}_h|_{1,\omega_E}^2+(2\mu)^{-1}||p-p_h||_{0,\omega_E}^2
+ {\Theta_{\omega_E}^2} \Big)^{1/2}
{\left(\sum_{K\in\omega_E}  \! \rho_{E}    ||  \bm{R}_E \,  ||^2_{0,E}\right)^{1/2}}.
\end{align*}
{Second, combining the Cauchy--Schwarz inequality with the above construction gives}
\begin{align*}
T_2 \lesssim     \Big(\sum_{K\in\omega_E}\rho_K^2  || \bm{f}-\bm{f}_{h} ||^2_{0,K} \Big)^{1/2}
\Big(\sum_{K\in\omega_E} \rho_{K}^{-2} ||\Lambda||^{2}_{0,K}\Big)^{1/2} 
\!\! \lesssim {\Theta_{\omega_E}}
 {\Big(\!\sum_{K\in\omega_E}  \! \rho_{E}    ||  \bm{R}_E \,  ||^2_{0,E}\Big)^{1/2}}\!\! \!\!.
\end{align*} 
{The third term can be bounded in a similar way, }
\begin{align*}
T_3 &\lesssim 
\Big( 2\mu|\bm{u}-\bm{u}_h|_{1,\omega_E}^2+(2\mu)^{-1}||p-p_h||_{0,\omega_E}^2 \Big)^{1/2} 
{\Big(\sum_{K\in\omega_E} 2\mu \, || \nabla \Lambda||^{2}_{0,K}\Big)^{1/2}},
\end{align*} 
where this time the second term is bounded using (\ref{efficie15}),
 $$ 2\mu \, || \nabla \Lambda||^{2}_{0,K} 
  \lesssim  \rho_{E}^{-1} h_E   || \nabla \Lambda||^{2}_{0,K} 
\lesssim    \rho_{E}^{-1}  || \, \rho_{E}  \, \bm{R}_E \,  ||^2_{0,E} .
$$

Combining the upper bounds for  $T_1, T_2$ and $T_3$, for any interior edge $E$, we have
\begin{align*}
\rho_E ||\bm{R}_E||_{0,E}^{2}  \lesssim \Big( 2\mu|\bm{u}-\bm{u}_h|_{1,\omega_E}^2+(2\mu)^{-1}||p-p_h||_{0,\omega_E}^2  + \Theta_{\omega_E}^{2} \Big). 
\end{align*}
If $E \in \Gamma_{N}$, then the same result holds with $\omega_{E}=K$ and we recall from \eqref{stressjump_def} that $\bm{R}_{E}=0$ for edges $E$ on $\Gamma_{D}$. Hence, summing over all the edges of element $K$, gives the required result.
\end{proof}

Finally, the lower bound (\ref{elowerbd}) in Theorem~\ref{efficie} follows from consolidating the estimates from Lemma~\ref{efficie12}, Lemma~\ref{efficieR2} and Lemma~\ref{efficieED1}.

\section{{Local problem error} estimators}\label{SLEPE}   
Having established that the residual error estimator $\eta$ in (\ref{errest1}) is reliable and efficient, the framework established by  Verf\"urth~\cite{RV}, makes it straightforward to now construct equivalent {\it local problem} estimators that are equally reliable and potentially more efficient.  We will
discuss four { novel robust local error estimators } in this section---the actual performance of two of these options (the so-called Stokes and Poisson problem local error estimators) will be discussed in Section~\ref{Numres}. To keep the discussion concise, the proofs of the equivalence of the estimators will simply be sketched. 

\subsection{{Elasticity problem local} error estimator}
The first local problem estimator  is  designed for
$\bm{Q}_2$ displacement approximation of (\ref{scm11a}). 
The error estimate  
\begin{align*}
\eta_{\mathcal{E}}= \sqrt{\sum_{K\in \mathcal{T}_h }\eta_{\mathcal{E},K}^2}
\end{align*} 
is assembled from estimates of element contributions to the energy error, given by
\begin{align}\label{elastloc1}
\eta_{\mathcal{E},K}^2=2\mu||\bm{\varepsilon}
(\bm{e}_{\mathcal{E},K})||^2_{0,K}+(2\mu)^{-1}||\epsilon_{\mathcal{E},K}||^2_{0,K}
+\lambda^{-1}||\epsilon_{\mathcal{E},K}||^2_{0,K}.
\end{align}
{Specifically, in the cases of $\bm{Q}_2$--$\bm{Q}_{1}$ and 
$\bm{Q}_2$--$\bm{P}_{-1}$ mixed approximations, we
 introduce higher-order {\it correction} spaces $\bm{Q}_3(K)$ and $\bm{Q}_2(K)$ (see~\cite{QLDS}) and solve a mixed elasticity problem  on each element:
find $(\bm{e}_{\mathcal{E},K},\epsilon_{\mathcal{E},K})\in \bm{Q}_3(K)\times\bm{Q}_2(K)$, such that}
\begin{subequations}  \label{elastloc}
\begin{align}\label{elastloc2}
2\mu(\bm{\varepsilon}(\bm{e}_{\mathcal{E},K}),\bm{\varepsilon}(\bm{v}))_K
-( \epsilon_{\mathcal{E},K}, \nabla\cdot \bm{v})_K 
&=(\bm{R}_K,\bm{v})_{K}
\!-\!\!\sum_{E\in\partial K} \! \langle \bm{R}_E, \bm{v}\rangle_E, \quad \forall  \bm{v}\in\bm{Q}_3(K), \\
-(\nabla\cdot \bm{e}_{\mathcal{E},K}, q)_{K} 
-\frac{1}{\lambda}(\epsilon_{\mathcal{E},K}, q)_{K}
&=-(R_K,q)_{K}, \quad \forall q\in \bm{Q}_2(K).\label{elastloc3}
\end{align}
\end{subequations}
The success of this estimation strategy is tied to the stability of the enhanced approximation: we need to  ensure that the local problems are  uniquely solvable in the incompressible limit $\nu = 1/2$. The next two  lemmas provide a more  formal statement.
\begin{lemma}[local inf--sup stability]
Suppose that the space $\bm{Q}_3(K)$ is as defined in \cite{QLDS}, then  there exists a positive constant $\gamma_L$, satisfying
\begin{align}\label{LIS}
\min_{0\neq q_h\in\bm{Q}_2(K)}\max_{0\neq \bm{v}_h\in\bm{Q}_3(K)}
\frac{|(q_h, \nabla\cdot \bm{v}_h)|}{|\bm{v}_h|_1||q_h||_0}\geq \gamma_L 
\end{align}
for all $K\in \mathcal{T}_h$.
\end{lemma}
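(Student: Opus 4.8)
The plan is to treat \eqref{LIS} as what it is: a purely discrete, material-parameter-free inf--sup condition of Stokes type on a single element. So I would argue by reduction to a fixed reference element followed by a scaling argument, using the shape regularity of $\{\mathcal{T}_h\}$ to produce a constant $\gamma_L$ that is independent of $K$ (and hence, trivially, of $\mu$ and $\lambda$, since neither appears in \eqref{LIS}). Fix the unit square $\widehat{K}=(0,1)^2$, and for a rectangle $K=(x_1,x_1+h_1)\times(x_2,x_2+h_2)\in\mathcal{T}_h$ let $F_K\colon\widehat{K}\to K$ be the associated diagonal affine map. Pulling $\bm{v}_h\in\bm{Q}_3(K)$ and $q_h\in\bm{Q}_2(K)$ back componentwise gives $\widehat{\bm{v}}_h\in\bm{Q}_3(\widehat{K})$ and $\widehat{q}_h\in\bm{Q}_2(\widehat{K})$, and a change of variables yields $\|q_h\|_{0,K}=(h_1h_2)^{1/2}\|\widehat{q}_h\|_{0,\widehat{K}}$, $|\bm{v}_h|_{1,K}\simeq(h_2/h_1)^{1/2}\,|\widehat{\bm{v}}_h|_{1,\widehat{K}}$, and
\begin{align*}
(q_h,\nabla\cdot\bm{v}_h)_K=h_2\Bigl(\widehat{q}_h,\ \partial_{\widehat{x}}\widehat{v}_{h,1}+\tfrac{h_1}{h_2}\,\partial_{\widehat{y}}\widehat{v}_{h,2}\Bigr)_{\widehat{K}}.
\end{align*}
Cancelling the powers of $h_1$ and $h_2$, the quotient in \eqref{LIS} equals, up to the shape-regularity constant, the corresponding quotient for the bilinear form $b_\sigma(\widehat{\bm{v}},\widehat{q}):=\bigl(\widehat{q},\ \partial_{\widehat{x}}\widehat{v}_1+\sigma\,\partial_{\widehat{y}}\widehat{v}_2\bigr)_{\widehat{K}}$ on $\bm{Q}_3(\widehat{K})\times\bm{Q}_2(\widehat{K})$, where $\sigma=h_1/h_2$ ranges over the compact set of aspect ratios permitted by shape regularity. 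It therefore suffices to bound the inf--sup constant $\gamma(\sigma)$ of $b_\sigma$ from below, uniformly in $\sigma$.

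The core step is to show $\gamma(\sigma)>0$ for every $\sigma>0$. Since $b_\sigma$ acts between fixed finite-dimensional spaces, this is equivalent to: no nonzero $\widehat{q}_h\in\bm{Q}_2(\widehat{K})$ is $L^2$-orthogonal to $\{\partial_{\widehat{x}}\widehat{v}_1+\sigma\,\partial_{\widehat{y}}\widehat{v}_2:\widehat{\bm{v}}\in\bm{Q}_3(\widehat{K})\}$. Taking $\widehat{\bm{v}}=(\widehat{w},0)$ with $\widehat{w}\in\bm{Q}_3(\widehat{K})$ arbitrary reduces this to $(\widehat{q}_h,\partial_{\widehat{x}}\widehat{w})_{\widehat{K}}=0$; and since $\bm{Q}_3(\widehat{K})$, by the construction in \cite{QLDS}, contains the fields $\bigl((i+1)^{-1}\widehat{x}^{\,i+1}\widehat{y}^{\,j},\,0\bigr)$ for $0\le i,j\le 2$, one gets $(\widehat{q}_h,\widehat{x}^{\,i}\widehat{y}^{\,j})_{\widehat{K}}=0$ for all $0\le i,j\le2$, i.e.\ $\widehat{q}_h$ is orthogonal to a basis of $\bm{Q}_2(\widehat{K})$, so $\widehat{q}_h=0$. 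Equivalently, $\nabla\cdot$ maps $\bm{Q}_3(\widehat{K})$ onto $\bm{Q}_2(\widehat{K})$; this surjectivity is the crux, and if the correction space of \cite{QLDS} is a strict subspace of the bicubics I expect the main (still routine) effort to be checking, via its explicit basis, that it contains enough fields to realise all of $\bm{Q}_2(\widehat{K})$ as divergences.

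To finish, I would note that $\gamma(\sigma)$ is the minimum over the product of unit spheres in two fixed finite-dimensional spaces of a quantity depending polynomially on $\sigma$, hence depends continuously on $\sigma$; since it is positive at every admissible $\sigma$ and the admissible set is compact, it attains a strictly positive minimum $\gamma_0$ there. Undoing the scalings gives \eqref{LIS} with $\gamma_L=c\,\gamma_0$, where $c>0$ depends only on the shape-regularity constant of $\{\mathcal{T}_h\}$, so $\gamma_L$ is independent of $K$, of $h_K$, and of the Lam\'e coefficients. The only genuinely delicate point in the whole argument is the anisotropic scaling bookkeeping in the first paragraph (needed because general rectangles are not similar to $\widehat{K}$); once $b_\sigma$ has been isolated, the reference-element inf--sup is elementary and the uniformity in $\sigma$ is a one-line compactness argument.
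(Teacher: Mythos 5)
The paper itself gives no proof of this lemma --- it simply states that the estimate \eqref{LIS} is established in \cite{QLDS} --- so your attempt at a self-contained argument is going beyond the source. Your outer framework (map $K$ to the reference square, absorb the anisotropy into a parameter $\sigma=h_1/h_2$ confined to a compact set by shape regularity, and use finite-dimensionality plus continuity in $\sigma$ to get a uniform constant) is sound, modulo the minor caveat that $|\bm{v}_h|_{1,K}\simeq(h_2/h_1)^{1/2}|\widehat{\bm{v}}_h|_{1,\widehat K}$ only holds up to aspect-ratio constants because the two partial derivatives scale differently; under shape regularity that is harmless.

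The genuine gap is at the crux, exactly where you hedge. Your proof of the reference-element inf--sup rests on the claim that $\bm{Q}_3(\widehat K)$ contains the fields $\bigl((i+1)^{-1}\widehat x^{\,i+1}\widehat y^{\,j},0\bigr)$ for $0\le i,j\le 2$. For the correction space actually meant here --- the one constructed in \cite{QLDS} --- this is false: that space is not the full vector bicubic space but the span of the $Q_3$ nodal basis functions attached to edge and interior nodes only, so every member vanishes at the four vertices of $K$, whereas each monomial $\widehat x^{\,i+1}\widehat y^{\,j}$ equals $1$ at the vertex $(1,1)$ (and already $(\widehat x,0)$ is excluded). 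Hence your verification that $\nabla\cdot$ maps the correction space onto $\bm{Q}_2(\widehat K)$ establishes the inf--sup only for the full bicubic velocity space, not for the space in the statement; and the step you dismiss as "routine" --- checking that the \emph{reduced} space is still stable against the full biquadratic pressure space --- is precisely the entire content of the lemma. To close it you would need to work with the explicit edge/interior $Q_3$ basis and either exhibit, for every $q\in \bm{Q}_2(\widehat K)$, a field in the correction space whose divergence pairs with $q$ uniformly (surjectivity of the divergence is sufficient but must be re-proved in the smaller space), or bound the reference-element inf--sup constant directly via the associated finite-dimensional eigenvalue problem; this is what is done in \cite{QLDS}, which is why the paper defers to that reference rather than reproving it.
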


The estimate (\ref{LIS}) is established in \cite{QLDS} and the next result is a direct consequence. It can be established by following the construction used in establishing Lemma~\ref{Sinsuplem12}.
\begin{lemma}[{local $\mathcal{B}$ stability}]\label{LBS}
For all $(\bm{w},s)\in {\bm{Q}_3(K)}\times\bm{Q}_2(K)$, we have that 
\begin{align}\label{Bstabl}
& \max_{(\bm{v},q)\in {\bm{Q}_3(K)} \times\bm{Q}_2(K)}\frac{\mathcal{B}(\bm{w},s;\bm{v},q)}{(2\mu)^{1/2}
||\bm{\varepsilon}(\bm{v})||_{0,K}+((2\mu)^{-1/2}+\lambda^{-1/2})||q||_{0,K}} \nonumber \\
&\qquad \geq \gamma_B \left( (2\mu)^{1/2}||\bm{\varepsilon}(\bm{w})||_{0,K}+((2\mu)^{-1/2}+\lambda^{-1/2})||s||_{0,K} \right),
\end{align}
where $\gamma_B>0$ is a constant, which only depends on the inf--sup constant $\gamma_L$ in (\ref{LIS}).
\end{lemma}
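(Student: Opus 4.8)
The plan is to mimic the argument used in Lemma~\ref{Sinsuplem12}, but now in the finite-dimensional setting of the local correction spaces $\bm{Q}_3(K)\times\bm{Q}_2(K)$, using the local inf--sup condition (\ref{LIS}) in place of the global continuous inf--sup condition (\ref{binfsup}). Throughout, all the relevant norms are the \emph{element-local} ones on $K$, and since $\bm{Q}_3(K)\subset\bm{H}^1(K)$ consists of polynomials vanishing on $\partial K$ is \emph{not} required here (these are enriched local spaces, not globally conforming test functions), Korn's inequality on $K$ yields $\|\bm\varepsilon(\bm v)\|_{0,K}$ equivalent to $|\bm v|_{1,K}$ up to a constant depending only on the shape regularity of $K$; equivalently one may simply work directly with $\|\bm\varepsilon(\cdot)\|_{0,K}$ as the displacement seminorm, which is what appears in (\ref{Bstabl}).

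First I would fix $(\bm w,s)\in\bm{Q}_3(K)\times\bm{Q}_2(K)$. By the local inf--sup condition (\ref{LIS}), there is $\bm v_s\in\bm{Q}_3(K)$ with
\begin{align*}
(s,\nabla\cdot\bm v_s)_K\ge \gamma_L\,\|s\|_{0,K}\,|\bm v_s|_{1,K},\qquad |\bm v_s|_{1,K}=\|s\|_{0,K},
\end{align*}
after the usual normalisation; hence $(s,\nabla\cdot\bm v_s)_K\ge\gamma_L\|s\|_{0,K}^2$. Testing $\mathcal{B}$ with $(-\bm v_s,0)$ and using the Cauchy--Schwarz bound on $a(\bm w,\bm v_s)$ together with Young's inequality gives, for any $\epsilon>0$,
\begin{align*}
\mathcal{B}(\bm w,s;-\bm v_s,0)\ge \Bigl(\gamma_L-\tfrac{1}{\epsilon}\Bigr)(2\mu)^{-1}\|s\|_{0,K}^2-\epsilon(2\mu)\|\bm\varepsilon(\bm w)\|_{0,K}^2,
\end{align*}
exactly as in (\ref{infsup2}) but with $C_\Omega$ replaced by $\gamma_L$. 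Second, testing with $(\bm w,-s)$ gives the coercive contribution
\begin{align*}
\mathcal{B}(\bm w,s;\bm w,-s)= 2\mu\,\|\bm\varepsilon(\bm w)\|_{0,K}^2+\tfrac{1}{\lambda}\|s\|_{0,K}^2.
\end{align*}

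Then I would combine the two with a parameter $\delta>0$, choosing $\epsilon=2/\gamma_L$ and $\delta=\gamma_L/4$ (so that $\delta\epsilon=1/2$ and $\delta(\gamma_L-1/\epsilon)=\gamma_L^2/8$), to obtain
\begin{align*}
\mathcal{B}(\bm w,s;\bm w-\delta\bm v_s,-s)\ge \min\Bigl\{1,\tfrac12,\tfrac{\gamma_L^2}{8}\Bigr\}\Bigl(2\mu\|\bm\varepsilon(\bm w)\|_{0,K}^2+(2\mu)^{-1}\|s\|_{0,K}^2+\lambda^{-1}\|s\|_{0,K}^2\Bigr),
\end{align*}
and separately bound $|\bm w-\delta\bm v_s|_{1,K}$ (equivalently $\|\bm\varepsilon(\bm w-\delta\bm v_s)\|_{0,K}$) by $\sqrt2\,\|\bm\varepsilon(\bm w)\|_{0,K}+\sqrt2\,\delta|\bm v_s|_{1,K}\lesssim \|\bm\varepsilon(\bm w)\|_{0,K}+(2\mu)^{-1}\cdot(2\mu)^{1/2}\|s\|_{0,K}$, so that the test pair $(\bm v,q):=(\bm w-\delta\bm v_s,-s)$ has denominator in (\ref{Bstabl}) controlled by a constant times the right-hand side of (\ref{Bstabl}). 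Dividing through then produces the claimed lower bound with $\gamma_B$ depending only on $\gamma_L$ (and shape regularity). The only genuine subtlety — the main obstacle — is bookkeeping the mixed $\mu$-weights so that the $((2\mu)^{-1/2}+\lambda^{-1/2})\|s\|_{0,K}$ factor in (\ref{Bstabl}) is handled uniformly in $\lambda$: one uses $((2\mu)^{-1/2}+\lambda^{-1/2})^2$ equivalent to $(2\mu)^{-1}+\lambda^{-1}$ and notes that the inf--sup step only supplies the $(2\mu)^{-1}\|s\|^2$ part while the coercivity step supplies the $\lambda^{-1}\|s\|^2$ part, exactly as in the proof of Lemma~\ref{Sinsuplem12}; since both steps are needed and neither constant degenerates as $\lambda\to\infty$, robustness follows. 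I would therefore simply remark that the argument is identical to that of Lemma~\ref{Sinsuplem12} with $(\bm H^1_{E_0},M)$ replaced by $(\bm{Q}_3(K),\bm{Q}_2(K))$ and $C_\Omega$ by $\gamma_L$, and omit the repeated computation.
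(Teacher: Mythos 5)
Your proposal follows essentially the same route as the paper, which proves this lemma precisely by repeating the construction of Lemma~\ref{Sinsuplem12} with the local inf--sup condition (\ref{LIS}) in place of (\ref{binfsup}) (and, as you note, no Korn constant is needed since the target norm is the $\bm{\varepsilon}$-seminorm, for which $a(\bm{w},\bm{w})=2\mu\|\bm{\varepsilon}(\bm{w})\|^2_{0,K}$ exactly). The only bookkeeping slip is the stated normalisation $|\bm{v}_s|_{1,K}=\|s\|_{0,K}$: for your displayed bound $\mathcal{B}(\bm{w},s;-\bm{v}_s,0)\ge\bigl(\gamma_L-\tfrac{1}{\epsilon}\bigr)(2\mu)^{-1}\|s\|^2_{0,K}-\epsilon\,(2\mu)\|\bm{\varepsilon}(\bm{w})\|^2_{0,K}$ to hold you need the paper's scaling $|\bm{v}_s|_{1,K}=(2\mu)^{-1}\|s\|_{0,K}$, after which the rest of the argument (and your choices $\epsilon=2/\gamma_L$, $\delta=\gamma_L/4$) goes through as claimed.
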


\begin{theorem}\label{localerrest1}
{In the case of $\bm{Q}_2$--$\bm{Q}_{1}$ or $\bm{Q}_2$--$\bm{P}_{-1}$ 
mixed approximation of  the elasticity problem (\ref{scm11a}), 
the local problem estimator $\eta_{\mathcal{E},K}$ defined  by (\ref{elastloc1})--(\ref{elastloc}) is equivalent  to the local residual error estimator $\eta_{K}$ associated with (\ref{errest1}), 
\begin{align}\label{equiv41}
\eta_{\mathcal{E},K} \lesssim \eta_{K}\lesssim \eta_{\mathcal{E},K} , \quad \forall K\in \mathcal{T}_h.
\end{align}
}
\end{theorem}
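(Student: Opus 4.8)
The plan is to establish the two-sided bound \eqref{equiv41} by exploiting the local $\mathcal{B}$-stability result of Lemma~\ref{LBS} together with the equivalence between the local solvers \eqref{elastloc} and the residuals $\bm{R}_K$, $R_K$, $\bm{R}_E$ that define $\eta_K$. Observe first that the right-hand side of the local problem \eqref{elastloc} is exactly $\mathcal{B}(\bm{e}_{\mathcal{E},K},\epsilon_{\mathcal{E},K};\bm{v},q)$ tested against $(\bm{v},q)\in\bm{Q}_3(K)\times\bm{Q}_2(K)$, and simultaneously equals the linear functional $\mathcal{F}_K(\bm{v},q):=(\bm{R}_K,\bm{v})_K-\sum_{E\in\partial K}\langle\bm{R}_E,\bm{v}\rangle_E-(R_K,q)_K$. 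So the correction pair $(\bm{e}_{\mathcal{E},K},\epsilon_{\mathcal{E},K})$ is the Riesz-type representative of $\mathcal{F}_K$ with respect to $\mathcal{B}$ on the finite-dimensional enriched space, and $\eta_{\mathcal{E},K}$ (up to the norm-equivalence $|\bm{v}|_{1,K}\simeq\|\bm{\varepsilon}(\bm{v})\|_{0,K}$ on $\bm{H}^1$, via Korn, noting the traceless-strain issue is harmless on a single element after fixing rigid modes) is equivalent to $\|\mathcal{F}_K\|_{*}$, the dual norm of $\mathcal{F}_K$ over that space measured in the energy norm $(2\mu)^{1/2}\|\bm{\varepsilon}(\cdot)\|_{0,K}+((2\mu)^{-1/2}+\lambda^{-1/2})\|\cdot\|_{0,K}$. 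The upper half $\eta_{\mathcal{E},K}\lesssim\eta_K$ then follows immediately: for any admissible $(\bm{v},q)$, bound $\mathcal{F}_K(\bm{v},q)$ by Cauchy--Schwarz, absorb the edge terms using a local trace/scaling inequality $\rho_E^{-1/2}\|\bm{v}\|_{0,E}\lesssim (2\mu)^{1/2}|\bm{v}|_{1,K}$ on the reference element (here I would use exactly the inverse-style estimate behind Lemma~\ref{approxlem11} but now with polynomial $\bm{v}$ on $K$), recognise the factors $\rho_K\|\bm{R}_K\|_{0,K}$, $\rho_E^{1/2}\|\bm{R}_E\|_{0,E}$, $\rho_d^{1/2}\|R_K\|_{0,K}$ as $\eta_{R_K},\eta_{E_K},\eta_{J_K}$, and divide by the denominator; taking the max over $(\bm{v},q)$ and applying the lower bound in Lemma~\ref{LBS} (with $(\bm{w},s)=(\bm{e}_{\mathcal{E},K},\epsilon_{\mathcal{E},K})$) gives the claim.

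For the reverse inequality $\eta_K\lesssim\eta_{\mathcal{E},K}$ I would recover each residual contribution by a judicious choice of test function inside $\bm{Q}_3(K)\times\bm{Q}_2(K)$, mimicking the bubble-function arguments in the proofs of Lemmas~\ref{efficie12}--\ref{efficieED1} but now \emph{without} the exact solution. Concretely: to control $\eta_{R_K}$, take $\bm{v}=\rho_K^2\chi_K\bm{R}_K$ (which lies in $\bm{Q}_3(K)$ when $\bm{R}_K$ is the appropriate low-order polynomial, after replacing $\bm{f}$ by $\bm{f}_h$ up to the oscillation term $\Theta_K$) and $q=0$ in \eqref{elastloc2}; the left-hand side is $2\mu(\bm{\varepsilon}(\bm{e}_{\mathcal{E},K}),\bm{\varepsilon}(\bm{v}))_K-(\epsilon_{\mathcal{E},K},\nabla\cdot\bm{v})_K$, bounded by $\eta_{\mathcal{E},K}$ times $\big(2\mu|\bm{v}|_{1,K}^2+\rho_K^{-2}\|\bm{v}\|_{0,K}^2\big)^{1/2}$, while the right-hand side is $\gtrsim\eta_{R_K}^2$ by \eqref{bubblea} with no boundary term since $\chi_K$ vanishes on $\partial K$; the inverse estimates \eqref{bubblea}--\eqref{bubbleb} then give $\eta_{R_K}\lesssim\eta_{\mathcal{E},K}+\Theta_K$. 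For $\eta_{J_K}$, take $\bm{v}=\bm{0}$, $q=R_K\in\bm{Q}_2(K)$ (it is a low-order polynomial on $K$) in \eqref{elastloc3}; the left side is bounded by $\eta_{\mathcal{E},K}$ against $\rho_d^{-1/2}\|q\|_{0,K}$-type factors and the right side is $-(R_K,R_K)_K=-\|R_K\|_{0,K}^2$, yielding $\eta_{J_K}\lesssim\eta_{\mathcal{E},K}$ after rescaling by $\rho_d$. For $\eta_{E_K}$, one edge at a time: pick $\bm{v}=\rho_E\chi_E\bm{R}_E$ extended polynomially into $K$ (localised to that element's contribution in $\bm{Q}_3(K)$), $q=0$; then \eqref{elastloc2} reads $2\mu(\bm{\varepsilon}(\bm{e}_{\mathcal{E},K}),\bm{\varepsilon}(\bm{v}))_K-(\epsilon_{\mathcal{E},K},\nabla\cdot\bm{v})_K=(\bm{R}_K,\bm{v})_K-\langle\bm{R}_E,\bm{v}\rangle_E$, so $\rho_E\|\bm{R}_E\|_{0,E}^2\lesssim|\langle\bm{R}_E,\bm{v}\rangle_E|\lesssim\eta_{\mathcal{E},K}(\cdots)+\eta_{R_K}(\cdots)$ and the trace/inverse estimates \eqref{efficie13}--\eqref{efficie15} close it, using the already-proved bound on $\eta_{R_K}$.

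The main obstacle I anticipate is bookkeeping the polynomial degrees so that the bubble-weighted test functions genuinely live in the prescribed correction spaces $\bm{Q}_3(K)$ and $\bm{Q}_2(K)$ — this is precisely why those particular enrichment spaces were chosen in \cite{QLDS}, and in the sketch I would simply invoke that $\bm{R}_K$, $R_K$, $\bm{R}_E$ have the degree (for $\bm{Q}_2$--$\bm{Q}_1$ or $\bm{Q}_2$--$\bm{P}_{-1}$ data, after the $\bm{f}\mapsto\bm{f}_h$ reduction) for which $\chi_K\bm{R}_K\in\bm{Q}_3(K)$ and $\chi_E\bm{R}_E|_K\in\bm{Q}_3(K)$, $R_K\in\bm{Q}_2(K)$, citing the reference rather than verifying it. A secondary subtlety is that Lemma~\ref{LBS} controls $\|\bm{\varepsilon}(\cdot)\|_{0,K}$ rather than $|\cdot|_{1,K}$; on a single element the two are equivalent modulo the space of rigid-body motions, and since the right-hand-side functional $\mathcal{F}_K$ together with the tested quantities are all insensitive to rigid modes (the residuals integrate against strains), this causes no loss — I would state this equivalence once and use it silently thereafter. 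Everything else is a routine repetition of the scaling arguments already developed in Section~\ref{proofreeft}, so the proof reduces to assembling the three local estimates above and combining with Lemma~\ref{LBS}, exactly as the statement promises; the data-oscillation terms $\Theta_K$ that appear can be absorbed because \eqref{equiv41} is understood up to such terms, consistent with Theorems~\ref{realiab}--\ref{efficie}.
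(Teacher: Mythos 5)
Your proposal follows essentially the same route as the paper's proof: the lower bound $\eta_{\mathcal{E},K}\lesssim\eta_{K}$ via the local $\mathcal{B}$-stability of Lemma~\ref{LBS} combined with Cauchy--Schwarz and a trace estimate, and the upper bound by testing (\ref{elastloc2})--(\ref{elastloc3}) with the interior-bubble function $\rho_K^2\chi_K\bm{R}_K$, the edge-bubble function $\rho_E\chi_E\bm{R}_E$, and $q=R_K\in\bm{Q}_2(K)$, which is exactly the paper's argument. One small correction: no data-oscillation term $\Theta_K$ actually arises (and none is permitted by \eqref{equiv41}), because $\bm{R}_K$ is defined with $\bm{f}_h$ and both $\eta_K$ and the local problem (\ref{elastloc}) are driven by the same discrete residuals, so your closing caveat about absorbing oscillation terms is unnecessary.
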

{\begin{proof}
Using Lemma \ref{LBS}, we have
\begin{align*}
\eta_{\mathcal{E},K}&=\sqrt{2\mu||\bm{\varepsilon}(\bm{e}_{\mathcal{E},K})||^2_{0,K}+(2\mu)^{-1}||\epsilon_{\mathcal{E},K}|^2_{0,K}+(\lambda)^{-1}||\epsilon_{\mathcal{E},K}||^2_{0,K}}\\
&\le (2\mu)^{1/2}||\bm{\varepsilon}(\bm{e}_{\mathcal{E},K})||_{0,K}+(2\mu)^{-1/2}||\epsilon_{\mathcal{E},K}||_{0,K}+(\lambda)^{-1/2}||\epsilon_{\mathcal{E},K}||_{0,K}\\
&\le \frac{1}{\gamma_B} \max_{(\bm{v},q)\in \bm{Q}_3(K)\times\bm{Q}_2(K)}\frac{\mathcal{B}((\bm{e}_{\mathcal{E},K},\epsilon_{\mathcal{E},K});(\bm{v},q))}{(2\mu)^{\frac{1}{2}}||\bm{\varepsilon}(\bm{v})||_{0,K}+(2\mu)^{-\frac{1}{2}}||q||_{0,K}+(\lambda)^{-\frac{1}{2}}||q||_{0,K}}\\
&\le \frac{1}{\gamma_B} \max_{(\bm{v},q)\in \bm{Q}_3(K)\times\bm{Q}_2(K)}\frac{(\bm{R}_K,\bm{v})-\sum_{E\in\partial K}(\bm{R}_E, \bm{v})_E-(q, R_K)}{(2\mu)^{\frac{1}{2}}||\bm{\varepsilon}(\bm{v})||_{0,K}+(2\mu)^{-\frac{1}{2}}||q||_{0,K}+(\lambda)^{-\frac{1}{2}}||q||_{0,K}}.
\end{align*}
Applying  Cauchy-Schwarz inequality and trace theorem, it follows:
\begin{align*}
\eta_{\mathcal{E},K}&=\sqrt{2\mu||\bm{\varepsilon}(\bm{e}_{\mathcal{E},K})||^2_{0,K}+(2\mu)^{-1}|\epsilon_{\mathcal{E},K}|^2_{0,K}+(\lambda)^{-1}||\epsilon_{\mathcal{E},K}||^2_{0,K}}\le \frac{1}{\gamma_B} \eta_K.
\end{align*}
To prove the upper bound, we use bubble function technique as given in proof of Theorem \ref{efficie}. Define $\bm{w}|_K=\rho^{2}_K \bm{R}_K\, \chi_K $. Using (\ref{bubblea}) and (\ref{elastloc2}), we obtain
\begin{align}\label{resef11}
\eta^2_{R_K}=\rho^2_K ||\bm{R}_K||^2_{0,K}&\lesssim (\bm{R}_K,\rho^2_K \chi_K \bm{R}_K)_{K}\nonumber\\
&= (\bm{R}_K,\bm{w})_{K}=2\mu(\bm{\varepsilon}(\bm{e}_{\mathcal{E},K}),\bm{\varepsilon}(\bm{w}_K))_K-( \epsilon_{\mathcal{E},K}, \nabla\cdot \bm{w}_K)_K.
\end{align} 
Applying Cauchy-Schwarz inequality in (\ref{resef11}) leads to
\begin{align*}
\eta^2_{R_K}\lesssim(\bm{R}_K, \bm{w})_K&\lesssim (2\mu||\bm{\varepsilon}(\bm{e}_{\mathcal{E},K})||^2_{0,K}+(2\mu)^{-1}||\epsilon_{\mathcal{E},K}||^2_{0,K})^{1/2}(2\mu)^{1/2}|\bm{w}_K|_{1,K}.
\end{align*}
Using (\ref{bubblea}) gives
\begin{align*}
\eta^2_{R_K}&\lesssim (2\mu||\bm{\varepsilon}(\bm{e}_{\mathcal{E},K})||^2_{0,K}+(2\mu)^{-1}|\epsilon_{\mathcal{E},K}|^2_{0,K})^{1/2}\eta_{R_K},
\end{align*}
which implies
\begin{align}\label{elastloc11}
\eta^2_{R_K}&\lesssim 2\mu||\bm{\varepsilon}(\bm{e}_{\mathcal{E},K})||^2_{0,K}+(2\mu)^{-1}|\epsilon_{\mathcal{E},K}|^2_{0,K}.
\end{align}
Next, we define
\begin{align*}
\Lambda=\rho_E
\llbracket p_{h}\underline{I}-2\mu\bm{\varepsilon}(\bm{u}_{h})\rrbracket \chi_E.
\end{align*}
where $\chi_E$ is an edge bubble function.
Then from (\ref{elastloc2}), we have
\begin{align*}
\eta_{E_K}^2\lesssim(\llbracket p_{h}\underline{I}-2\mu\bm{\varepsilon}(\bm{u}_{h})\rrbracket,\Lambda)_E=-2\mu(\bm{\varepsilon}(\bm{e}_{\mathcal{E},K}),\bm{\varepsilon}(\Lambda))_K+( \epsilon_{\mathcal{E},K}, \nabla\cdot \Lambda)_K+(R_K,\Lambda).
\end{align*}
Using Cauchy-Schwarz inequality, inequality (\ref{efficie14}) and (\ref{elastloc11}), it follows:
\begin{align*}
\eta_{E_K}^2\lesssim(2\mu||\bm{\varepsilon}(\bm{e}_{\mathcal{E},K})||^2_{0,K}+(2\mu)^{-1}||\epsilon_{\mathcal{E},K}||^2_{0,K})^{1/2}\eta_{E_K},
\end{align*}
which leads to
\begin{align}\label{elastloc12}
\eta_{E_K}^2\lesssim 2\mu||\bm{\varepsilon}(\bm{e}_{\mathcal{E},K})||^2_{0,K}+(2\mu)^{-1}||\epsilon_{\mathcal{E},K}||^2_{0,K}.
\end{align}
Since $\nabla\cdot \bm{u}_h+\frac{1}{\lambda}p_h \in \bm{Q}_2(K)$, then from (\ref{elastloc3}), it holds
\begin{align*}
&(\nabla\cdot \bm{e}_{\mathcal{E},K}, \nabla\cdot \bm{u}_h+\frac{1}{\lambda}p_h)_K +\frac{1}{\lambda}(\epsilon_{\mathcal{E},K}, \nabla\cdot \bm{u}_h+\frac{1}{\lambda}p_h)_K=(R_K,R_K)_K,\\
&||R_K||_{0,K}=||\nabla\cdot\bm{u}_h+\frac{1}{\lambda}p_h||_{0,K}\lesssim ||\nabla\cdot \bm{e}_{\mathcal{E},K}+\frac{1}{\lambda}\epsilon_{\mathcal{E},K}||_{0,K}.
\end{align*}
Using same argument as given in Lemma \ref{efficieR2}, we have
\begin{align}\label{elastloc13}
\eta_{J_K}^2=\rho_d ||R_K||_{0,K}^2&\lesssim \rho_d ||\nabla\cdot \bm{e}_{\mathcal{E},K}+\frac{1}{\lambda}\epsilon_{\mathcal{E},K}||_{0,K}^2\nonumber\\
&\lesssim \rho_d ||\nabla\cdot \bm{e}_{\mathcal{E},K}||_{0,K}^2+\frac{\rho_d}{\lambda^2}||\epsilon_{\mathcal{E},K}||_{0,K}^2\nonumber\\
&\lesssim 2\mu ||\varepsilon (\bm{e}_{\mathcal{E},K})||_{0,K}^2+\frac{1}{\lambda}||\epsilon_{\mathcal{E},K}||_{0,K}^2 \lesssim\eta_{\mathcal{E},K}^2.
\end{align}
Combining  (\ref{elastloc11}),  (\ref{elastloc12}) and  (\ref{elastloc13}) implies the desired upper bound. \quad \end{proof}}
{\begin{remark}
\rbl{All constants arising in the proof of Theorem \ref{localerrest1} are independent of the local mesh parameters ($h_E$ and $h_K$) as well as the Lam\'{e} coefficients ($\mu$ and $\lambda$). Hence the theory confirms that the
 local error estimator $\eta_{\mathcal{E},K}$ is fully robust.}
\end{remark}}
\subsection{{Modified elasticity problem local error estimator}}
The element problem (\ref{elastloc}) that is solved when computing  the elasticity problem local error estimator can be simplified by decoupling the stress tensor term. Thus, rather than solving  (\ref{elastloc}), the  error estimate $\eta_{\mathcal{E},K}$ is computed via (\ref{elastloc1}) using estimates  $\bm{e}_{\mathcal{E},K}$, $\epsilon_{\mathcal{E},K}$ that are generated by solving  the simplified  local problem:
find $(\bm{e}_{\mathcal{E},K},\epsilon_{\mathcal{E},K}) \in \bm{Q}_3(K)\times\bm{Q}_2(K)$, such that
\begin{subequations}  \label{melastloc}
\begin{align}\label{melastloc2}
2\mu(\nabla\bm{e}_{\mathcal{E},K},\nabla\bm{v})_K
-( \epsilon_{\mathcal{E},K}, \nabla\cdot \bm{v})_K 
&=(\bm{R}_K,\bm{v})_{K}
\!-\!\!\sum_{E\in\partial K} \! \langle \bm{R}_E, \bm{v}\rangle_E, \quad \forall  \bm{v}\in\bm{Q}_3(K), \\
-(\nabla\cdot \bm{e}_{\mathcal{E},K}, q)_{K} 
-\frac{1}{\lambda}(\epsilon_{\mathcal{E},K}, q)_{K}
&=-(R_K,q)_{K}, \quad \forall q\in \bm{Q}_2(K),\label{melastloc3}
\end{align}
\end{subequations}
where the correction spaces $\bm{Q}_3(K)$ and $\bm{Q}_2(K)$ are unchanged.
The stability of the  simplified local problem (\ref{melastloc}) can be easily checked: the   
bilinear form $\mathcal{B}$ in (\ref{Bstabl}) is simply replaced by the decoupled variant 
$$ \mathcal{B}_{\mathcal{ME}}(\bm{w},s;\bm{v},q)= 2\mu(\nabla\bm{w},\nabla\bm{v})_K
- ( s, \nabla\cdot \bm{v})_K-(\nabla\cdot \bm{w}, q)
 -\frac{1}{\lambda}(s, q).
 $$
We omit the details. An equivalence  result is formally stated below.
\begin{theorem}
{In the case of $\bm{Q}_2$--$\bm{Q}_{1}$ or $\bm{Q}_2$--$\bm{P}_{-1}$ 
mixed approximation of  the elasticity problem (\ref{scm11a}), 
the local problem estimator $\eta_{\mathcal{E},K}$ defined  by (\ref{elastloc1}) and (\ref{melastloc}) is equivalent  to the local residual error estimator $\eta_{K}$ associated with (\ref{errest1}), 
\begin{align}\label{equiv42}
\eta_{\mathcal{E},K} \lesssim \eta_{K}\lesssim \eta_{\mathcal{E},K} , \quad \forall K\in \mathcal{T}_h.
\end{align}
}
\end{theorem}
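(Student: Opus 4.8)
The plan is to mimic the proof of Theorem \ref{localerrest1} essentially verbatim, replacing the local mixed elasticity bilinear form by its decoupled variant $\mathcal{B}_{\mathcal{ME}}$ and keeping the same correction spaces $\bm{Q}_3(K)\times\bm{Q}_2(K)$. The first step is to record the decoupled analogue of Lemma \ref{LBS}: since the local inf--sup condition (\ref{LIS}) involves only $(q_h,\nabla\cdot\bm{v}_h)$, $|\bm{v}_h|_1$ and $\|q_h\|_0$, it is insensitive to whether the displacement--displacement block is $2\mu(\bm{\varepsilon}(\cdot),\bm{\varepsilon}(\cdot))_K$ or $2\mu(\nabla\cdot,\nabla\cdot)_K$; hence the same Babu\v{s}ka--Brezzi argument used for Lemma \ref{Sinsuplem12} gives a constant $\gamma_B>0$, independent of $h_K$, $h_E$, $\mu$, $\lambda$, with
\begin{align*}
\max_{(\bm{v},q)}\frac{\mathcal{B}_{\mathcal{ME}}(\bm{w},s;\bm{v},q)}{(2\mu)^{1/2}\|\nabla\bm{v}\|_{0,K}+((2\mu)^{-1/2}+\lambda^{-1/2})\|q\|_{0,K}}\geq\gamma_B\Big((2\mu)^{1/2}\|\nabla\bm{w}\|_{0,K}+((2\mu)^{-1/2}+\lambda^{-1/2})\|s\|_{0,K}\Big),
\end{align*}
where the maximum is over $(\bm{v},q)\in\bm{Q}_3(K)\times\bm{Q}_2(K)$. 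The one genuinely new point here is that the energy quantity controlled is now $\|\nabla\bm{e}_{\mathcal{E},K}\|_{0,K}$ rather than $\|\bm{\varepsilon}(\bm{e}_{\mathcal{E},K})\|_{0,K}$, so at the very end one invokes the norm equivalence $\|\bm{\varepsilon}(\bm{v})\|_{0,K}\le\|\nabla\bm{v}\|_{0,K}$ together with a local Korn inequality on the reference element (with constant depending only on shape regularity) to pass between the two; this is the place to be slightly careful, but it costs only a fixed constant.

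For the bound $\eta_{\mathcal{E},K}\lesssim\eta_K$, the argument is identical to the one in Theorem \ref{localerrest1}: apply the stability estimate above to $(\bm{w},s)=(\bm{e}_{\mathcal{E},K},\epsilon_{\mathcal{E},K})$, use the defining equations (\ref{melastloc2})--(\ref{melastloc3}) to replace $\mathcal{B}_{\mathcal{ME}}(\bm{e}_{\mathcal{E},K},\epsilon_{\mathcal{E},K};\bm{v},q)$ by $(\bm{R}_K,\bm{v})_K-\sum_{E\in\partial K}\langle\bm{R}_E,\bm{v}\rangle_E-(q,R_K)_K$, then bound the right-hand side by Cauchy--Schwarz and a local trace inequality to recover exactly $\eta_{R_K}$, $\eta_{E_K}$ and $\eta_{J_K}$; finally convert $\|\nabla\bm{e}_{\mathcal{E},K}\|_{0,K}$ to $\|\bm{\varepsilon}(\bm{e}_{\mathcal{E},K})\|_{0,K}$ via local Korn to land on $\eta_{\mathcal{E},K}$.

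For the reverse bound $\eta_K\lesssim\eta_{\mathcal{E},K}$, one reruns the three bubble-function estimates (\ref{elastloc11}), (\ref{elastloc12}), (\ref{elastloc13}). Setting $\bm{w}|_K=\rho_K^2\bm{R}_K\chi_K$ and testing (\ref{melastloc2}) gives $\eta_{R_K}^2\lesssim 2\mu(\nabla\bm{e}_{\mathcal{E},K},\nabla\bm{w})_K-(\epsilon_{\mathcal{E},K},\nabla\cdot\bm{w})_K\lesssim(2\mu\|\nabla\bm{e}_{\mathcal{E},K}\|_{0,K}^2+(2\mu)^{-1}\|\epsilon_{\mathcal{E},K}\|_{0,K}^2)^{1/2}\eta_{R_K}$ after (\ref{bubblea})--(\ref{bubbleb}); the edge term is handled with $\Lambda=\rho_E\llbracket p_h\bm{I}-2\mu\bm{\varepsilon}(\bm{u}_h)\rrbracket\chi_E$ exactly as before, now testing the decoupled equation; and the divergence term $\eta_{J_K}$ is controlled by choosing $q=R_K$ in (\ref{melastloc3}) and arguing as in Lemma \ref{efficieR2}, using the definition of $\rho_d$. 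Summing the three pieces yields $\eta_K^2\lesssim 2\mu\|\nabla\bm{e}_{\mathcal{E},K}\|_{0,K}^2+(2\mu)^{-1}\|\epsilon_{\mathcal{E},K}\|_{0,K}^2+\lambda^{-1}\|\epsilon_{\mathcal{E},K}\|_{0,K}^2$, which is $\eta_{\mathcal{E},K}^2$ up to the fixed Korn constant. I expect the only real obstacle to be keeping the Korn-type norm passage clean and checking that it does not secretly introduce a $\mu$- or $h$-dependence — it does not, because on a fixed reference element the Korn constant is universal and the scaling to $K$ cancels between the two sides — so the theorem follows, with all constants independent of $h_K$, $h_E$, $\mu$ and $\lambda$. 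As in the paper, we omit the routine details.
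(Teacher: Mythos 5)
Your proposal is correct and follows essentially the route the paper itself intends: the paper omits the proof of this theorem, noting only that the argument for Theorem \ref{localerrest1} carries over with $\mathcal{B}$ replaced by the decoupled form $\mathcal{B}_{\mathcal{ME}}$ (whose local stability follows from the same inf--sup condition (\ref{LIS})), and your bubble-function estimates for $\eta_{R_K}$, $\eta_{E_K}$, $\eta_{J_K}$ reproduce exactly that argument. Your one extra ingredient --- passing between $\|\nabla\bm{e}_{\mathcal{E},K}\|_{0,K}$ and $\|\bm{\varepsilon}(\bm{e}_{\mathcal{E},K})\|_{0,K}$ via a discrete Korn inequality on the correction space $\bm{Q}_3(K)$, which is needed only for the direction $\eta_K\lesssim\eta_{\mathcal{E},K}$ since the trivial bound $\|\bm{\varepsilon}(\bm{v})\|_{0,K}\le\|\nabla\bm{v}\|_{0,K}$ suffices for the other --- is precisely the detail the paper glosses over, and it is legitimate because $\bm{Q}_3(K)$ contains no rigid-body modes and the constant is controlled by shape regularity, independently of $h_K$, $\mu$ and $\lambda$.
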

{The next estimator simplifies the local problem solved in (\ref{melastloc})  even further.}

\subsection{{Stokes problem local error estimator}} \label{sec_Stokes}
The Stokes problem estimator
\begin{align*}
\eta_{S} = \sqrt{\sum_{K \in {\cal T}_{h}} \eta_{S,K}^{2}}
\end{align*} 
can be assembled from local contributions given by 
\begin{align}\label{stokesloc1}
\eta_{S,K}^2=2\mu|\bm{e}_{S,K}|^2_{1,K}+\rho_d^{-1}||\epsilon_{S,K}||^2_{0,K},
\end{align}
where $(\bm{e}_{S,K},\epsilon_{S,K})\in \bm{Q}_3(K)\times\bm{Q}_2(K)$ 
solves a Stokes problem on each element:
\begin{subequations}  \label{stokesloc}
\begin{align}\label{stokesloc2}
2\mu(\nabla\bm{e}_{\mathcal{S},K},\nabla\bm{v})_K
-( \epsilon_{\mathcal{S},K}, \nabla\cdot \bm{v})_K 
&=(\bm{R}_K,\bm{v})_{K}
\!-\!\!\sum_{E\in\partial K} \! \langle \bm{R}_E, \bm{v}\rangle_E, \quad \forall  \bm{v}\in\bm{Q}_3(K), \\
-(\nabla\cdot \bm{e}_{\mathcal{S},K}, q)_{K} 
&=-(R_K,q)_{K}, \quad \forall q\in \bm{Q}_2(K).\label{stokesloc3}
\end{align}
\end{subequations}
The stability of this approximation is also an immediate consequence of the inf--sup stability \eqref{LIS} of the chosen correction spaces.

\begin{lemma}[{local $\mathcal{B}_S$ stability}]\label{SLBS1}
For all $(\bm{w},s)\in  \bm{Q}_3(K) \times\bm{Q}_2(K)$, we have that 
\begin{align}
&\max_{(\bm{v},q)\in {\bm{Q}_3(K)} \times\bm{Q}_2(K)}
\frac{\mathcal{B}_S(\bm{w},s;\bm{v},q)}{(2\mu)^{1/2}|\bm{v}|_{1,K}+\rho_d^{-1/2}||q||_{0,K}}\nonumber\\
&\qquad\geq \gamma_{S} \left( (2\mu)^{1/2}|\bm{w}|_{1,K}+\rho_d^{-1/2}||s||_{0,K}\right),
\end{align}
where $\mathcal{B}_{S}(\bm{w},s;\bm{v},q)= 2\mu(\nabla\bm{w},\nabla\bm{v})_K-(s, \nabla\cdot \bm{v})_K-(\nabla\cdot \bm{w}, q) $. The stability  constant $\gamma_{S}>0$ only depends on the inf--sup constant $\gamma_L$ in (\ref{LIS}).
\end{lemma}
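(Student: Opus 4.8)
The proof follows the template of Lemma~\ref{Sinsuplem12}, carried out at the element level with the test pair drawn from the correction spaces $\bm{Q}_3(K)\times\bm{Q}_2(K)$ in place of $\bm{H}^1_{E_0}\times M$. Fix $K\in\mathcal{T}_h$ and $(\bm{w},s)\in\bm{Q}_3(K)\times\bm{Q}_2(K)$. The plan is to exhibit a single test pair $(\bm{v},q)\in\bm{Q}_3(K)\times\bm{Q}_2(K)$ for which
\[
\mathcal{B}_S(\bm{w},s;\bm{v},q)\gtrsim \bigl((2\mu)^{1/2}|\bm{w}|_{1,K}+\rho_d^{-1/2}\|s\|_{0,K}\bigr)^2
\]
while $(2\mu)^{1/2}|\bm{v}|_{1,K}+\rho_d^{-1/2}\|q\|_{0,K}\lesssim (2\mu)^{1/2}|\bm{w}|_{1,K}+\rho_d^{-1/2}\|s\|_{0,K}$; dividing then yields the stated inf--sup inequality, with $\gamma_S$ the quotient of the hidden constants.

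First I would use the diagonal choice: for $(\bm{v},q)=(\bm{w},-s)$ the two mixed terms of $\mathcal{B}_S$ cancel, leaving $\mathcal{B}_S(\bm{w},s;\bm{w},-s)=2\mu|\bm{w}|_{1,K}^2$, which controls the displacement part of the target. To recover the pressure part I would invoke the local inf--sup estimate (\ref{LIS}): there is $\bm{z}\in\bm{Q}_3(K)$ which, after a choice of sign, satisfies $-(s,\nabla\cdot\bm{z})_K\ge\gamma_L\,|\bm{z}|_{1,K}\,\|s\|_{0,K}$, and which I would rescale so that $(2\mu)^{1/2}|\bm{z}|_{1,K}=\rho_d^{-1/2}\|s\|_{0,K}$. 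Then, for a small parameter $\delta>0$ to be chosen, I would test against $(\bm{v},q)=(\bm{w}+\delta\bm{z},\,-s)$, so that
\[
\mathcal{B}_S(\bm{w},s;\bm{v},q)=2\mu|\bm{w}|_{1,K}^2+\delta\bigl(2\mu(\nabla\bm{w},\nabla\bm{z})_K-(s,\nabla\cdot\bm{z})_K\bigr).
\]
The cross term $2\mu(\nabla\bm{w},\nabla\bm{z})_K$ is handled with Cauchy--Schwarz and Young's inequality, absorbing at most half of $2\mu|\bm{w}|_{1,K}^2$, while the inf--sup bound gives $-\delta(s,\nabla\cdot\bm{z})_K\ge\delta\gamma_L(2\mu)^{-1/2}\rho_d^{-1/2}\|s\|_{0,K}^2$; choosing $\delta$ in terms of $\gamma_L$ alone then leaves $\mathcal{B}_S(\bm{w},s;\bm{v},q)\gtrsim 2\mu|\bm{w}|_{1,K}^2+\gamma_L^2(2\mu)^{-1}\|s\|_{0,K}^2$. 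Stability of the test pair follows at once from the triangle inequality and the normalisation of $\bm{z}$, since $(2\mu)^{1/2}|\bm{v}|_{1,K}+\rho_d^{-1/2}\|q\|_{0,K}\le (2\mu)^{1/2}|\bm{w}|_{1,K}+(1+\delta)\rho_d^{-1/2}\|s\|_{0,K}$. This is exactly the pattern by which (\ref{infsup1})--(\ref{infsup2}) were assembled into (\ref{infsup11})--(\ref{infsup21}) in the proof of Lemma~\ref{Sinsuplem12}.

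The step I expect to need the most care is upgrading the $(2\mu)^{-1}$-weighted control of $s$ produced above to the $\rho_d^{-1}$-weighted control demanded by the target norm. Since $\rho_d^{-1}=\lambda^{-1}+(2\mu)^{-1}\ge(2\mu)^{-1}$ and $\mathcal{B}_S$ has a vanishing $(2,2)$ block --- there is no $\lambda^{-1}(s,q)$ term to exploit, unlike for the form $\mathcal{B}$ in Lemma~\ref{LBS} --- this hinges on the factor $\rho_d/(2\mu)=(1+2\mu/\lambda)^{-1}$, which is bounded below precisely in the incompressible regime $\lambda\gg\mu$ that the paper targets; there the resulting $\gamma_S$ depends only on $\gamma_L$, as claimed. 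Everything else contributes only shape-regularity constants: (\ref{LIS}) is quoted from~\cite{QLDS}, and the Poincar\'e and trace estimates needed to pass between $\|\cdot\|_{0,K}$ and $|\cdot|_{1,K}$ on the single element $K$ are elementary.
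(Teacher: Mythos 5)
The paper itself omits the proof of Lemma~\ref{SLBS1} (it is asserted to be an immediate consequence of the local inf--sup condition (\ref{LIS}), mirroring the construction of Lemma~\ref{Sinsuplem12} and Lemma~\ref{LBS}), and your construction --- the diagonal choice $(\bm{w},-s)$ augmented by $\delta\bm{z}$ with $\bm{z}$ realising (\ref{LIS}) for $s$, Cauchy--Schwarz/Young on the cross term, and the triangle inequality for the norm of the test pair --- is exactly the intended argument; the algebra you sketch is sound.

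The caveat you raise about the pressure weight is genuine, and it is in fact a limitation of the statement rather than of your proof. Since $\mathcal{B}_S$ has no $\lambda^{-1}(s,q)_K$ block, control of $s$ comes only through the divergence term and therefore carries the weight $(2\mu)^{-1/2}$: your test pair yields $\mathcal{B}_S(\bm{w},s;\bm{v},q)\gtrsim 2\mu|\bm{w}|_{1,K}^2+\gamma_L^2(2\mu)^{-1}\|s\|_{0,K}^2$, and converting $(2\mu)^{-1}$ into $\rho_d^{-1}=\lambda^{-1}+(2\mu)^{-1}$ costs the factor $\rho_d/(2\mu)=\lambda/(\lambda+2\mu)$, exactly as you say. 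This loss is unavoidable: taking $\bm{w}=\bm{0}$, the numerator $\mathcal{B}_S(\bm{0},s;\bm{v},q)=-(s,\nabla\cdot\bm{v})_K$ is independent of $q$ and is bounded by $\sqrt{2}\,(2\mu)^{-1/2}\|s\|_{0,K}\,(2\mu)^{1/2}|\bm{v}|_{1,K}$, so any admissible constant must satisfy $\gamma_S\le\sqrt{2}\,\bigl(\lambda/(\lambda+2\mu)\bigr)^{1/2}$, which degenerates as $\lambda/\mu\to 0$. Hence the lemma with $\gamma_S$ depending only on $\gamma_L$ is uniformly valid only when $\lambda\gtrsim\mu$ (equivalently, it holds for all parameter values if the pressure weight is taken as $(2\mu)^{-1/2}$), which is the reading you adopt and the regime the paper targets. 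Two minor points of execution: if you normalise $\bm{z}$ by $(2\mu)^{1/2}|\bm{z}|_{1,K}=(2\mu)^{-1/2}\|s\|_{0,K}$ rather than $\rho_d^{-1/2}\|s\|_{0,K}$, then $\delta$ can be chosen in terms of $\gamma_L$ alone for every $\mu,\lambda$ and the factor $\lambda/(\lambda+2\mu)$ enters only in the final change of weight, whereas with your normalisation the claim that $\delta$ depends only on $\gamma_L$ already presupposes $\lambda\gtrsim\mu$; and no Poincar\'e or trace inequalities are actually needed anywhere in the argument.
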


\noindent 
Once again, we omit the details. An equivalence  result is formally stated below.
\smallskip 

\begin{theorem}
In the case of $\bm{Q}_2$--$\bm{Q}_{1}$ or $\bm{Q}_2$--$\bm{P}_{-1}$ 
mixed approximation of  the elasticity problem (\ref{scm11a}), 
the local problem estimator $\eta_{\mathcal{S},K}$ defined  by (\ref{stokesloc1})--(\ref{stokesloc}) is equivalent  to the modified elasticity error estimator $\eta_{\mathcal{E},K}$ defined  by (\ref{elastloc1}) and (\ref{melastloc}), 
\begin{align}\label{equiv43}
\eta_{\mathcal{S},K} \lesssim \eta_{\mathcal{E},K} \lesssim \eta_{\mathcal{S},K} , \quad \forall K\in \mathcal{T}_h.
\end{align}
\end{theorem}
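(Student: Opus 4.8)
The plan is to show the two estimators $\eta_{\mathcal{S},K}$ and $\eta_{\mathcal{E},K}$ are each equivalent to the local residual estimator $\eta_K$, so that \eqref{equiv43} follows by combining \eqref{equiv42} with the corresponding bounds for $\eta_{\mathcal{S},K}$. Thus it suffices to prove $\eta_{\mathcal{S},K}\lesssim\eta_K\lesssim\eta_{\mathcal{S},K}$ uniformly in the mesh and the Lam\'e coefficients. The argument mirrors the proof of Theorem~\ref{localerrest1} almost verbatim, with the bilinear form $\mathcal{B}$ replaced by $\mathcal{B}_S$ and the paired norm replaced by $(2\mu)^{1/2}|\cdot|_{1,K}+\rho_d^{-1/2}\|\cdot\|_{0,K}$; the role of Lemma~\ref{LBS} is now played by Lemma~\ref{SLBS1}.

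\textbf{Upper bound $\eta_{\mathcal{S},K}\lesssim\eta_K$.} Apply the local $\mathcal{B}_S$-stability of Lemma~\ref{SLBS1} with $(\bm{w},s)=(\bm{e}_{\mathcal{S},K},\epsilon_{\mathcal{S},K})$. By \eqref{stokesloc} the numerator $\mathcal{B}_S(\bm{e}_{\mathcal{S},K},\epsilon_{\mathcal{S},K};\bm{v},q)$ equals $(\bm{R}_K,\bm{v})_K-\sum_{E\in\partial K}\langle\bm{R}_E,\bm{v}\rangle_E-(R_K,q)_K$. Bound each term by Cauchy--Schwarz and the trace inequality (picking up factors $\rho_K$ and $\rho_E^{1/2}$ as in Lemma~\ref{approxlem11}), divide by the denominator $(2\mu)^{1/2}|\bm{v}|_{1,K}+\rho_d^{-1/2}\|q\|_{0,K}$, and recognise the result as a constant times $\eta_K=(\eta_{R_K}^2+\eta_{E_K}^2+\eta_{J_K}^2)^{1/2}$; note $\rho_d^{-1/2}\|q\|_{0,K}$ is exactly the scaling that converts $(R_K,q)_K$ into $\eta_{J_K}$.

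\textbf{Lower bound $\eta_K\lesssim\eta_{\mathcal{S},K}$.} Use the bubble-function technique exactly as in the proof of Theorem~\ref{localerrest1}: with $\bm{w}|_K=\rho_K^2\bm{R}_K\chi_K$, equation \eqref{stokesloc2} and \eqref{bubblea} give $\eta_{R_K}^2\lesssim 2\mu(\nabla\bm{e}_{\mathcal{S},K},\nabla\bm{w})_K-(\epsilon_{\mathcal{S},K},\nabla\cdot\bm{w})_K$, and Cauchy--Schwarz together with \eqref{bubblea}--\eqref{bubbleb} yields $\eta_{R_K}^2\lesssim 2\mu|\bm{e}_{\mathcal{S},K}|_{1,K}^2+\rho_d^{-1}\|\epsilon_{\mathcal{S},K}\|_{0,K}^2=\eta_{\mathcal{S},K}^2$ (here $(2\mu)^{-1/2}\|\epsilon_{\mathcal{S},K}\|\le\rho_d^{-1/2}\|\epsilon_{\mathcal{S},K}\|$ since $\rho_d^{-1}=\lambda^{-1}+(2\mu)^{-1}$). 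Similarly, with $\Lambda=\rho_E\llbracket(p_h\bm{I}-2\mu\bm{\varepsilon}(\bm{u}_h))\bm{n}\rrbracket\chi_E$, relation \eqref{stokesloc2} and \eqref{efficie13}--\eqref{efficie15} bound $\eta_{E_K}^2\lesssim\eta_{\mathcal{S},K}^2$. Finally $\eta_{J_K}^2=\rho_d\|R_K\|_{0,K}^2$ is controlled by testing \eqref{stokesloc3} with $q=R_K\in\bm{Q}_2(K)$, giving $\|R_K\|_{0,K}^2=-(\nabla\cdot\bm{e}_{\mathcal{S},K},R_K)_K\le|\bm{e}_{\mathcal{S},K}|_{1,K}\|R_K\|_{0,K}$, hence $\rho_d\|R_K\|_{0,K}^2\lesssim\rho_d|\bm{e}_{\mathcal{S},K}|_{1,K}^2\lesssim 2\mu|\bm{e}_{\mathcal{S},K}|_{1,K}^2$ using $\rho_d\le 2\mu$. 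Summing the three contributions gives $\eta_K\lesssim\eta_{\mathcal{S},K}$.

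\textbf{Main obstacle.} The one genuine subtlety is keeping the constants independent of $\mu$ and $\lambda$ while the Stokes correction problem \eqref{stokesloc} uses the $H^1$-seminorm rather than the $\mathcal{B}$-induced energy norm: one must verify that the $\rho_d$-weighting in \eqref{stokesloc1} is precisely what is needed so that the divergence residual $\eta_{J_K}$ is absorbed, and that the constant $\gamma_S$ from Lemma~\ref{SLBS1} depends only on $\gamma_L$. Everything else is a routine transcription of the arguments already carried out for $\eta_{\mathcal{E},K}$, so the proof is short and we only sketch it. All constants are independent of $h_E,h_K,\mu,\lambda$, so $\eta_{\mathcal{S},K}$ is fully robust.
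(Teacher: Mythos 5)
Your argument is correct, but it does not follow the route the paper takes for results of this type (the paper omits the proof of this particular theorem, but its template is visible in the proof of Theorem~\ref{poiest11}). The paper's strategy is a \emph{direct} comparison of the two local problems: since (\ref{melastloc}) and (\ref{stokesloc}) have identical right-hand sides, one has $\mathcal{B}_{\mathcal{ME}}(\bm{e}_{\mathcal{E},K},\epsilon_{\mathcal{E},K};\bm{v},q)=\mathcal{B}_{S}(\bm{e}_{S,K},\epsilon_{S,K};\bm{v},q)$ for all $(\bm{v},q)\in\bm{Q}_3(K)\times\bm{Q}_2(K)$, and the two stability lemmas (the $\mathcal{B}_{\mathcal{ME}}$ analogue of Lemma~\ref{LBS} and Lemma~\ref{SLBS1}) together with Cauchy--Schwarz on the opposite form then bound each solution's weighted norm by the other's, exactly as in the proof of Theorem~\ref{poiest11}; note the only discrepancy between the two weighted norms, $(2\mu)^{-1/2}\|\cdot\|_{0,K}+\lambda^{-1/2}\|\cdot\|_{0,K}$ versus $\rho_d^{-1/2}\|\cdot\|_{0,K}$, is harmless because $\rho_d^{-1}=(2\mu)^{-1}+\lambda^{-1}$. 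You instead prove $\eta_{S,K}\sim\eta_K$ from scratch (stability plus Cauchy--Schwarz/trace for the upper bound, bubble functions for the lower bound) and then invoke (\ref{equiv42}); this is transitive and the constants stay independent of $h$, $\mu$, $\lambda$, so it is a legitimate proof, and it has the side benefit of exhibiting the equivalence of $\eta_{S,K}$ with the residual indicator directly. Its cost is that it re-runs the whole bubble-function machinery and leans on (\ref{equiv42}), which the paper also states without proof, whereas the direct comparison is shorter and self-contained given the stability lemmas. Two small points to tidy: testing (\ref{stokesloc3}) with $q=R_K$ gives $\|R_K\|_{0,K}^2=(\nabla\cdot\bm{e}_{S,K},R_K)_K$ (your sign is off, though the bound is unaffected), and your Cauchy--Schwarz/trace step in the upper bound implicitly uses the local Poincar\'e-type bounds $\|\bm{v}\|_{0,K}\lesssim h_K|\bm{v}|_{1,K}$ and $\|\bm{v}\|_{0,E}\lesssim h_E^{1/2}|\bm{v}|_{1,K}$ for correction-space functions, as well as $\bm{w}=\rho_K^2\bm{R}_K\chi_K\in\bm{Q}_3(K)$ and $R_K\in\bm{Q}_2(K)$; these are the same implicit assumptions the paper makes in Theorem~\ref{localerrest1}, but they deserve a sentence.
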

\noindent 
The last estimator we consider simplifies the local problem in (\ref{elastloc}) in a clever  way.

\subsection{{Poisson problem local error estimator}} \label{sec_Poisson}
The Poisson problem estimator 
\begin{align*}
\eta_{P} = \sqrt{\sum_{K \in {\cal T}_{h}} \eta_{P,K}^{2}}
\end{align*}
is assembled from local contributions given by
\begin{align}\label{poiloc1}
\eta_{P,K}^2=2\mu|\bm{e}_{P,K}|^2_{1,K}+\rho_d^{-1}||\epsilon_{P,K}||^2_{0,K},
\end{align}
where $(\bm{e}_{P,K}, \epsilon_{P,K})\in \bm{Q}_3(K)\times\bm{Q}_2(K)$ 
solve the following problem  on each element: 
\begin{subequations}  \label{poiloc}
\begin{align}\label{poiloc2}
2\mu \, (\nabla\bm{e}_{P,K},\nabla\bm{v})_K &=(\bm{R}_K,\bm{v})_K
-\sum_{E\in\partial K} \langle \bm{R}_E, \bm{v} \rangle_E, \quad \forall \bm{v}\in\bm{Q}_3(K), \\
\rho_d^{-1}(\epsilon_{P,K}, q)_K &=(R_K,q)_K, \quad \forall q\in \bm{Q}_2(K).\label{poiloc3}
\end{align}
\end{subequations}  
This local problem is very appealing from a computational perspective.  First,  (\ref{poiloc2}) decouples into a pair of local Poisson problems. Second, since $\epsilon_{P,K}\in \bm{Q}_2(K)$, the solution of (\ref{poiloc3}) is immediate: 
$\epsilon_{P,K} = \rho_d R_K = \rho_d (\nabla\cdot \bm{u}_h + \lambda^{-1} p_h)$, 
thus the local error estimator (\ref{poiloc1}) simplifies to
$   \eta_{P,K}^2 =2\mu \, ||\nabla \bm{e}_{P,K}||^2_{0,K} 
+ \rho_d || \nabla\cdot \bm{u}_h + \lambda^{-1} p_h ||^2_{0,K}$.
A third attractive feature of this decoupled approach is that the stability of the local problem (\ref{poiloc}) is guaranteed---there is no need to construct compatible correction spaces. An equivalence result for the error estimator is formally stated below.
\begin{theorem}\label{poiest11}
The local problem estimator $\eta_{P,K}$ defined by \eqref{poiloc1} and \eqref{poiloc} is equivalent to the modified 
elasticity  error estimator $\eta_{\mathcal{E},K}$ defined  by (\ref{elastloc1}) and (\ref{melastloc}), 
\begin{align}\label{equiv44}
\eta_{\mathcal{P},K} \lesssim \eta_{\mathcal{E},K} \lesssim \eta_{\mathcal{P},K} , 
\quad \forall K\in \mathcal{T}_h.
\end{align} 
\end{theorem}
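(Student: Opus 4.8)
The plan is to prove the equivalence in \eqref{equiv44} by transitivity: since the preceding theorem already establishes $\eta_{\mathcal{E},K}\lesssim\eta_K\lesssim\eta_{\mathcal{E},K}$ for the modified elasticity estimator, it suffices to show $\eta_{P,K}\lesssim\eta_K\lesssim\eta_{P,K}$, i.e.\ that the Poisson estimator is equivalent to the residual estimator $\eta_K=(\eta_{R_K}^2+\eta_{E_K}^2+\eta_{J_K}^2)^{1/2}$ from \eqref{components}. The key simplification, already noted in the text, is that \eqref{poiloc3} is solved explicitly, giving $\epsilon_{P,K}=\rho_d R_K$, so the pressure-type contribution to $\eta_{P,K}^2$ is exactly $\rho_d^{-1}\|\epsilon_{P,K}\|_{0,K}^2=\rho_d\|R_K\|_{0,K}^2=\eta_{J_K}^2$. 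Thus the pressure part of the equivalence is an identity, and all the work reduces to showing $2\mu\,|\bm{e}_{P,K}|_{1,K}^2$ is equivalent to $\eta_{R_K}^2+\eta_{E_K}^2$.

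First I would prove the upper bound $2\mu\,|\bm{e}_{P,K}|_{1,K}^2\lesssim\eta_{R_K}^2+\eta_{E_K}^2$. Testing \eqref{poiloc2} with $\bm{v}=\bm{e}_{P,K}\in\bm{Q}_3(K)$ gives $2\mu\,|\bm{e}_{P,K}|_{1,K}^2=(\bm{R}_K,\bm{e}_{P,K})_K-\sum_{E\in\partial K}\langle\bm{R}_E,\bm{e}_{P,K}\rangle_E$. Apply Cauchy--Schwarz to each term, then bound $\|\bm{e}_{P,K}\|_{0,K}\lesssim h_K|\bm{e}_{P,K}|_{1,K}$ (Poincar\'e/Friedrichs on $\bm{Q}_3(K)$, using that $\bm{e}_{P,K}$ has a zero trace component, or a Bramble--Hilbert scaling argument on the reference element) and $\|\bm{e}_{P,K}\|_{0,E}\lesssim h_E^{1/2}|\bm{e}_{P,K}|_{1,K}$ (the trace/scaled-trace inequality). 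Rewriting the resulting $h_K$ and $h_E$ weights in terms of $\rho_K$ and $\rho_E$ via \eqref{gridparams} produces $2\mu\,|\bm{e}_{P,K}|_{1,K}^2\lesssim(\eta_{R_K}+\eta_{E_K})(2\mu)^{1/2}|\bm{e}_{P,K}|_{1,K}$, and dividing through gives the claim.

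For the reverse direction $\eta_{R_K}^2+\eta_{E_K}^2\lesssim 2\mu\,|\bm{e}_{P,K}|_{1,K}^2$ I would reuse the bubble-function machinery exactly as in the proof of Theorem~\ref{efficie} and Theorem~\ref{localerrest1}. Take $\bm{w}|_K=\rho_K^2\bm{R}_K\chi_K$ with the interior bubble $\chi_K$; by \eqref{bubblea}, $\eta_{R_K}^2\lesssim(\bm{R}_K,\bm{w})_K$, and since $\bm{w}$ vanishes on $\partial K$ we may insert it into \eqref{poiloc2} to get $(\bm{R}_K,\bm{w})_K=2\mu(\nabla\bm{e}_{P,K},\nabla\bm{w})_K$; Cauchy--Schwarz together with the inverse estimate \eqref{bubbleb} $\|\nabla\bm{w}\|_{0,K}\lesssim h_K^{-1}\|\bm{w}\|_{0,K}\lesssim\rho_K^{-1}(2\mu)^{-1/2}\|\bm{w}\|_{0,K}$ and the definition of $\bm{w}$ then yield $\eta_{R_K}^2\lesssim 2\mu\,|\bm{e}_{P,K}|_{1,K}^2$. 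For the edge part, for each $E\in\partial K$ set $\Lambda=\rho_E\llbracket(p_h\bm{I}-2\mu\bm{\varepsilon}(\bm{u}_h))\bm{n}\rrbracket\chi_E$ with the edge bubble $\chi_E$ extended by zero; by \eqref{efficie13}, $\eta_{E_K}^2\lesssim\sum_E\langle\bm{R}_E,\Lambda\rangle_E$, and using \eqref{poiloc2} on each element of the patch $\omega_E$ together with the already-established bound on $\eta_{R_K}$ (over the elements of $\omega_E$) to absorb the volume term, plus \eqref{efficie14}--\eqref{efficie15} to convert the $h_E$ weights, gives $\eta_{E_K}^2\lesssim 2\mu\,|\bm{e}_{P,K}|_{1,\omega_E}^2\lesssim 2\mu\sum_{K'\subset\omega_E}|\bm{e}_{P,K'}|_{1,K'}^2$; summing over $E\in\partial K$ finishes the bound (treating Neumann edges with $\omega_E=K$ and noting $\bm{R}_E=0$ on Dirichlet edges, as in Lemma~\ref{efficieED1}).

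The main obstacle is the edge-residual lower bound: in \eqref{poiloc} the velocity equation is genuinely a (vector) Poisson problem, so the test function $\Lambda$ supported on the patch $\omega_E$ couples the contributions of neighbouring elements, and one must be careful that bounding $\eta_{E_K}$ by $|\bm{e}_{P,K}|_{1,\omega_E}$ rather than by $|\bm{e}_{P,K}|_{1,K}$ alone is still consistent with a local (element-by-element) equivalence statement of the form \eqref{equiv44}. This is handled exactly as in Lemma~\ref{efficieED1}: the patch-based bound is summed over the finitely many edges of $K$ and, because the mesh is shape regular, each element appears in a bounded number of patches, so the global estimators $\eta_P$ and $\eta_{\mathcal{E}}$ are equivalent with constants independent of $h$ and of the Lam\'e parameters (the latter because every weight $\rho_K,\rho_E,\rho_d$ is tracked explicitly and the bubble-function constants are purely geometric). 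Since the stability of \eqref{poiloc} is automatic, no analogue of Lemma~\ref{LBS} is needed here. As with the other theorems in this section, I would only sketch these steps and omit the routine scaling computations.
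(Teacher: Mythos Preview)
Your transitivity strategy (show $\eta_{P,K}\simeq\eta_K$ and then invoke the previous theorem $\eta_{\mathcal{E},K}\simeq\eta_K$) is a legitimate alternative, but it differs substantially from the paper's route and carries one genuine gap as written.

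\textbf{What the paper does differently.} The paper never passes through the residual estimator $\eta_K$. Instead it exploits the structural identity that the two local problems \eqref{melastloc} and \eqref{poiloc} share \emph{identical right-hand sides}. Writing the Poisson bilinear form against $(\bm v,q)$ and using \eqref{melastloc} gives
\[
2\mu(\nabla\bm e_{P,K},\nabla\bm v)_K-\rho_d^{-1}(\epsilon_{P,K},q)_K=\mathcal{B}_{\mathcal{ME}}(\bm e_{\mathcal{E},K},\epsilon_{\mathcal{E},K};\bm v,q).
\]
The bound $\eta_{\mathcal{E},K}\lesssim\eta_{P,K}$ then follows from the local $\mathcal{B}$ stability (the analogue of Lemma~\ref{LBS}) together with Cauchy--Schwarz, while $\eta_{P,K}\lesssim\eta_{\mathcal{E},K}$ follows from the dual characterisation $(2\mu)^{1/2}|\bm e_{P,K}|_{1,K}=\max_{\bm v}2\mu(\nabla\bm e_{P,K},\nabla\bm v)_K/((2\mu)^{1/2}|\bm v|_{1,K})$ and the same identity. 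No bubble functions appear; the argument is purely algebraic once the local stability is in hand. Your approach instead re-runs the bubble machinery of Theorem~\ref{efficie}, which is longer but has the advantage of not invoking Lemma~\ref{LBS} at all.

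\textbf{The gap.} Your edge-residual lower bound, as written, does not deliver the \emph{element-wise} equivalence stated in \eqref{equiv44}. By supporting the edge bubble on the full patch $\omega_E$ and testing \eqref{poiloc2} on both elements, you obtain only $\eta_{E_K}^2\lesssim\sum_{K'\subset\omega_E}2\mu|\bm e_{P,K'}|_{1,K'}^2$, hence $\eta_K\lesssim\sum_{K'\in\omega_K}\eta_{P,K'}$; you yourself downgrade the conclusion to equivalence of the \emph{global} estimators. The fix is simple but essential: the correction space $\bm Q_3(K)$ contains the (restriction to $K$ of the) edge bubble, so you may take $\Lambda|_K\in\bm Q_3(K)$ as a test function in \eqref{poiloc2} on $K$ alone. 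Since $\Lambda$ vanishes on $\partial K\setminus E$, this yields $\langle\bm R_E,\Lambda\rangle_E=(\bm R_K,\Lambda)_K-2\mu(\nabla\bm e_{P,K},\nabla\Lambda)_K$, and the already-established bound $\eta_{R_K}\lesssim(2\mu)^{1/2}|\bm e_{P,K}|_{1,K}$ together with \eqref{efficie14}--\eqref{efficie15} gives $\rho_E\|\bm R_E\|_{0,E}^2\lesssim 2\mu|\bm e_{P,K}|_{1,K}^2$ with no reference to the neighbour. This is exactly how the paper handles the analogous step in the proof of Theorem~\ref{localerrest1}.
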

{\begin{proof}
From (\ref{elastloc2}), (\ref{elastloc3}), (\ref{poiloc2}) and (\ref{poiloc3}), for any $K\in \mathcal{T}_h$ and $(\bm{v},q)\in \bm{Q}_3(K)\times\bm{Q}_2(K)$, we have that
\begin{align}
2\mu \, (\nabla\bm{e}_{P,K},\nabla\bm{v})_K -\rho_d^{-1}(\epsilon_{P,K}, q)_K&=(\bm{R}_K\,\bm{v})-\sum_{E\in\partial K}(\bm{R}_E, \bm{v})_E-(R_K,q)\nonumber\\
&=2\mu(\bm{\varepsilon}(\bm{e}_{\mathcal{E},K}),\bm{\varepsilon}(\bm{v}))_K-( \epsilon_{\mathcal{E},K}, \nabla\cdot \bm{v})_K\nonumber\\
&\quad-(\nabla\cdot \bm{e}_{\mathcal{E},K}, q) -\frac{1}{\lambda}(\epsilon_{\mathcal{E},K}, q)\nonumber\\
&=\mathcal{B}((\bm{e}_{\mathcal{E},K},\epsilon_{\mathcal{E},K});(\bm{v},q)).
\end{align}
Using the local $\mathcal{B}$ stability from Lemma \ref{LBS}, it holds:
\begin{align*}
 &(2\mu)^{1/2}||\bm{\varepsilon}(\bm{e}_{\mathcal{E},K})||_{0,K}+(2\mu)^{-1/2}||\epsilon_{\mathcal{E},K}||_{0,K}+(\lambda)^{-1/2}||\epsilon_{\mathcal{E},K}||_{0,K}\\
&\le \frac{1}{\gamma_B} \max_{(\bm{v},q)\in \bm{Q}_3(K)\times\bm{Q}_2(K)}\frac{\mathcal{B}((\bm{e}_{\mathcal{E},K},\epsilon_{\mathcal{E},K});(\bm{v},q))}{(2\mu)^{1/2}||\bm{\varepsilon}(\bm{v})||_{0,K}+(2\mu)^{-1/2}||q||_{0,K}+(\lambda)^{-1/2}||q||_{0,K}}\\
&=\frac{1}{\gamma_B} \max_{(\bm{v},q)\in \bm{Q}_3(K)\times\bm{Q}_2(K)}\frac{2\mu \, (\nabla\bm{e}_{P,K},\nabla\bm{v})_K -\rho_d^{-1}(\epsilon_{P,K}, q)_K}{(2\mu)^{1/2}||\bm{\varepsilon}(\bm{v})||_{0,K}+(2\mu)^{-1/2}||q||_{0,K}+(\lambda)^{-1/2}||q||_{0,K}}.
\end{align*}
Applying Cauchy-Schwarz inequality implies that
 \begin{align}
 (2\mu)^{1/2}||\bm{\varepsilon}(\bm{e}_{\mathcal{E},K})||_{0,K}+(2\mu)^{-1/2}|\epsilon_{\mathcal{E},K}|_{0,K}+(\lambda)^{-1/2}|\epsilon_{\mathcal{E},K}|_{0,K}
&\lesssim \eta_{P,K}.
 \end{align}
 To establish the upper bound, we take $\bm{v}\in \bm{Q}_3(K)$, and then using (\ref{elastloc2}) and (\ref{poiloc2}) leads to 
 \begin{align}\label{sygloc4}
 2\mu(\nabla\bm{e}_{P,K},\nabla\bm{v})_K &=(\bm{R}_K\,\bm{v})-\sum_{E\in\partial K}(\bm{R}_E, \bm{v})_E\nonumber\\
 &=2\mu(\bm{\varepsilon}(\bm{e}_{\mathcal{E},K}),\bm{\varepsilon}(\bm{v}))_K-( \epsilon_{\mathcal{E},K}, \nabla\cdot \bm{v})_K.
 \end{align}
 From (\ref{elastloc3}) and (\ref{poiloc3}), it holds:
 \begin{align}\label{sygloc5}
 \rho_d^{-1}(\epsilon_{P,K}, q)_K&=(R_K,q)=(\nabla\cdot \bm{e}_{\mathcal{E},K}, q) +\frac{1}{\lambda}(\epsilon_{\mathcal{E},K}, q), \quad \forall q\in \bm{Q}_2(K).
 \end{align}
 Using (\ref{sygloc4}) gives
 \begin{align}
 (2\mu)^{1/2}||\nabla\bm{e}_{P,K}||_{0,K}&=\max_{v\in\bm{Q}_3(K)}\frac{2\mu(\nabla\bm{e}_{P,K},\nabla\bm{v})}{(2\mu)^{1/2}||\nabla\bm{v}||_{0,K}}\nonumber\\
 &=\max_{v\in\bm{Q}_3(K)}\frac{2\mu(\bm{\varepsilon}(\bm{e}_{\mathcal{E},K}),\bm{\varepsilon}(\bm{v}))_K-( \epsilon_{\mathcal{E},K}, \nabla\cdot \bm{v})_K}{(2\mu)^{1/2}||\bm{\varepsilon}(\bm{v})||_{0,K}}.
 \end{align}
Applying Cauchy-Schwarz inequality leads to
\begin{align}\label{sygloc6}
 (2\mu)^{1/2}||\nabla\bm{e}_{P,K}||_{0,K}\lesssim (2\mu)^{1/2}||\bm{\varepsilon}(\bm{e}_{\mathcal{E},K})||_{0,K}+(2\mu)^{-1/2}||\epsilon_{\mathcal{E},K}||_{0,K}.
 \end{align}
Using (\ref{sygloc5}), we have 
 \begin{align}
\rho_d^{-1/2}||\epsilon_{P,K}||_{0,K}&=\max_{q\in\bm{Q}_2(K)}\frac{\rho_d^{-1}(\epsilon_{P,K},q)}{\rho_d^{-1/2}||q||_{0,K}}\nonumber\\
&=\max_{q\in\bm{Q}_2(K)}\frac{(\nabla\cdot \bm{e}_{\mathcal{E},K}, q)_K +\frac{1}{\lambda}(\epsilon_{\mathcal{E},K}, q)_K}{\rho_d^{-1/2}||q||_{0,K}}\nonumber\\
 &\le \max_{q\in\bm{Q}_2(K)}\frac{\rho_d^{1/2}||\nabla\cdot \bm{e}_{\mathcal{E},K}+\frac{1}{\lambda}\epsilon_{\mathcal{E},K}||_{0,K}\rho_d^{-1/2}||q||_{0,K}}{\rho_d^{-1/2}||q||_{0,K}}\nonumber\\
 &\le \rho_d^{1/2}||\nabla\cdot \bm{e}_{\mathcal{E},K}+\frac{1}{\lambda}\epsilon_{\mathcal{E},K}||_{0,K}.
 \end{align}
 By (\ref{elastloc13}), it holds:
 \begin{align}\label{sygloc7}
 \rho_d^{-1}||\epsilon_{P,K}||_{0,K}^2&\le \rho_d ||\nabla\cdot \bm{e}_{\mathcal{E},K}+\frac{1}{\lambda}\epsilon_{\mathcal{E},K}||_{0,K}^2 \nonumber\\
 &\lesssim (2\mu)||\bm{\varepsilon}(\bm{e}_{\mathcal{E},K})||_{0,K}^2+(\lambda)^{-1}||\epsilon_{\mathcal{E},K}||_{0,K}^2.
 \end{align}
 Combining (\ref{sygloc6}) and (\ref{sygloc7}) implies the required upper bound. 
\end{proof}}
{\begin{remark}
\rbl{All constants arising in the proof of Theorem  \ref{poiest11} are independent of the local mesh parameters ($h_E$ and $h_K$) as well as the Lam\'{e} coefficients ($\mu$ and $\lambda$). Hence the theory confirms that the
 local error estimator $\eta_{P,K}$  is fully robust.} 
\end{remark}}

We note that the strategy of decoupling the components of local problem error estimators in a mixed setting was pioneered by Ainsworth \& Oden, see~\cite[Section\,9.2]{MJ}.

\section{{Computational results}}\label{Numres}
In this concluding section, we present computational results for three test problems in order to critically compare the performance of some of the error estimation strategies introduced above. Specifically, results are shown  for the residual estimator $\eta$ defined in \eqref{errest1}, the Stokes problem local estimator $\eta_S$ defined in Section \ref{sec_Stokes}, and the Poisson problem local estimator $\eta_P$ defined in Section \ref{sec_Poisson}. In all the reported results, the mixed finite element approximation $(\bm{u}_{h}, p_{h})$ was computed using $\bm{Q}_2$--$\bm{Q}_{1}$ elements, using the IFISS\footnote{IFISS is an open-source MATLAB toolbox that includes algorithms for discretization by mixed finite element methods and a posteriori error estimation of computed solutions; see~\cite{ifiss} for a review\rbl{.}} toolbox~\cite{DHA}. Error estimates computed for $\bm{Q}_2$--$\bm{P}_{-1}$ mixed approximations (that is, with a discontinuous linear pressure) showed exactly the same behaviour, but are not reported.

\subsection{Analytic solution}
Our first problem has been used as a test problem by Carstensen \& Gedicke~\cite{CJ}. It is posed on a square domain $ \Omega = (0,1)\times (0,1)$ with a zero essential boundary condition on $\Gamma_{D}=\Gamma$. The load is $\textbf{f}=(f_1,f_2)^{\top}$  where
\begin{align*}
f_1&=-2\mu\pi^3\cos(\pi y)\sin(\pi y)(2\cos(2\pi x) -1), \\
f_2&= 2\mu\pi^3\cos(\pi x)\sin(\pi x)(2\cos(2\pi y)-1),
\end{align*}
and the corresponding exact displacement vector is given by $\textbf{u}=(u_1,u_2)^{\top}$
where
\begin{align*}
u_1&=\pi\cos(\pi y)\sin^2(\pi x)\sin(\pi y),\quad
u_2= -\pi\cos(\pi x)\sin^2(\pi y)\sin(\pi x).
\end{align*}
Here, the exact pressure solution is $p=0$, since $\nabla \cdot \bm{u}=0$. The mixed finite element approximation is computed using uniform grids of square elements of width $h$.

Figure~\ref{Ex1mu100} shows the convergence behaviour of the energy norm of the exact error $e=|||(\bm{u}-\bm{u}_{h},p-p_{h})|||$ as the finite element mesh is refined, as well as the estimates obtained with the three chosen error estimation strategies. In this experiment, $\mu$ is fixed,  and we consider two representative values of the Poisson ratio $\nu$. In both cases, the energy norm error converges to zero at the optimal rate for a smooth solution; that is both the exact and estimated errors are $\mathcal{O}(h^2)$. The fact that the lines on the error plots associated with the estimators $\eta$, $\eta_P$ and $\eta_S$ all lie parallel to the line associated with the exact error confirms that all three estimators are  efficient as well as being reliable. Moreover, the results clearly indicate that the Poisson problem local error estimator is the method of choice:  $\eta_P$ is relatively cheap to compute and gives more accurate estimates of the error $e$ than the residual estimator $\eta$. The computed effectivity indices displayed in Table~\ref{tabeff100} reinforce this point. The effectivity of the Poisson problem local estimator is close to unity even when $\nu$ approaches the incompressible limit. Identical effectivity indices to those shown are generated when the experiments are repeated with smaller  values of $\mu$:  specifically, we tested $\mu=1$ and  $\mu=0.01$. All our results confirm the robustness of the three error estimators to variations in the parameters $\mu$ and $\nu$, and hence $\lambda$ (since $\lambda= 2 \mu \nu / (1-2\nu)$).

\begin{figure}[!ht]
\centering
\includegraphics[width=.51\textwidth]{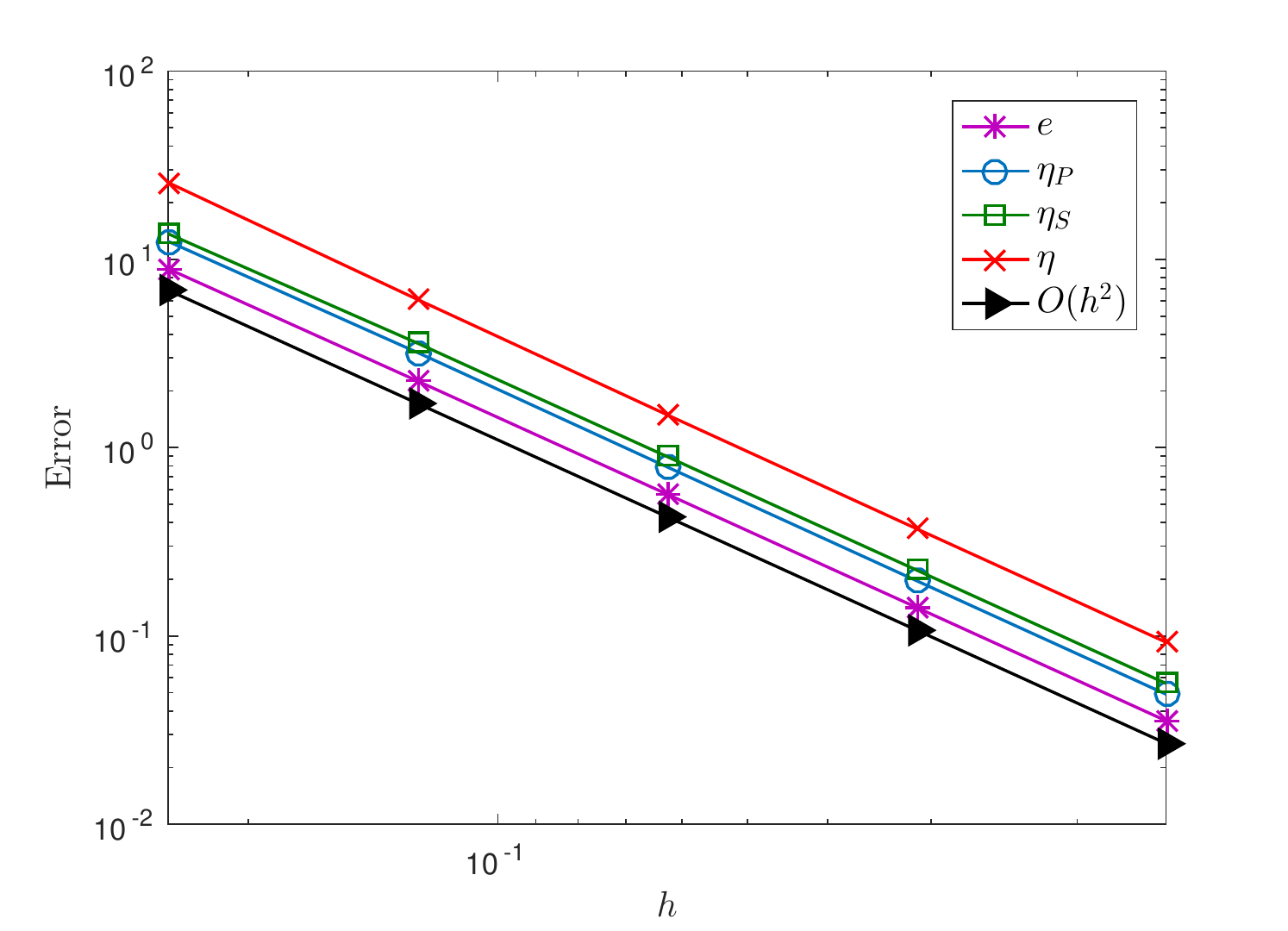}
\label{ex1mu100nu4}
\centering
\includegraphics[width=.46\textwidth]{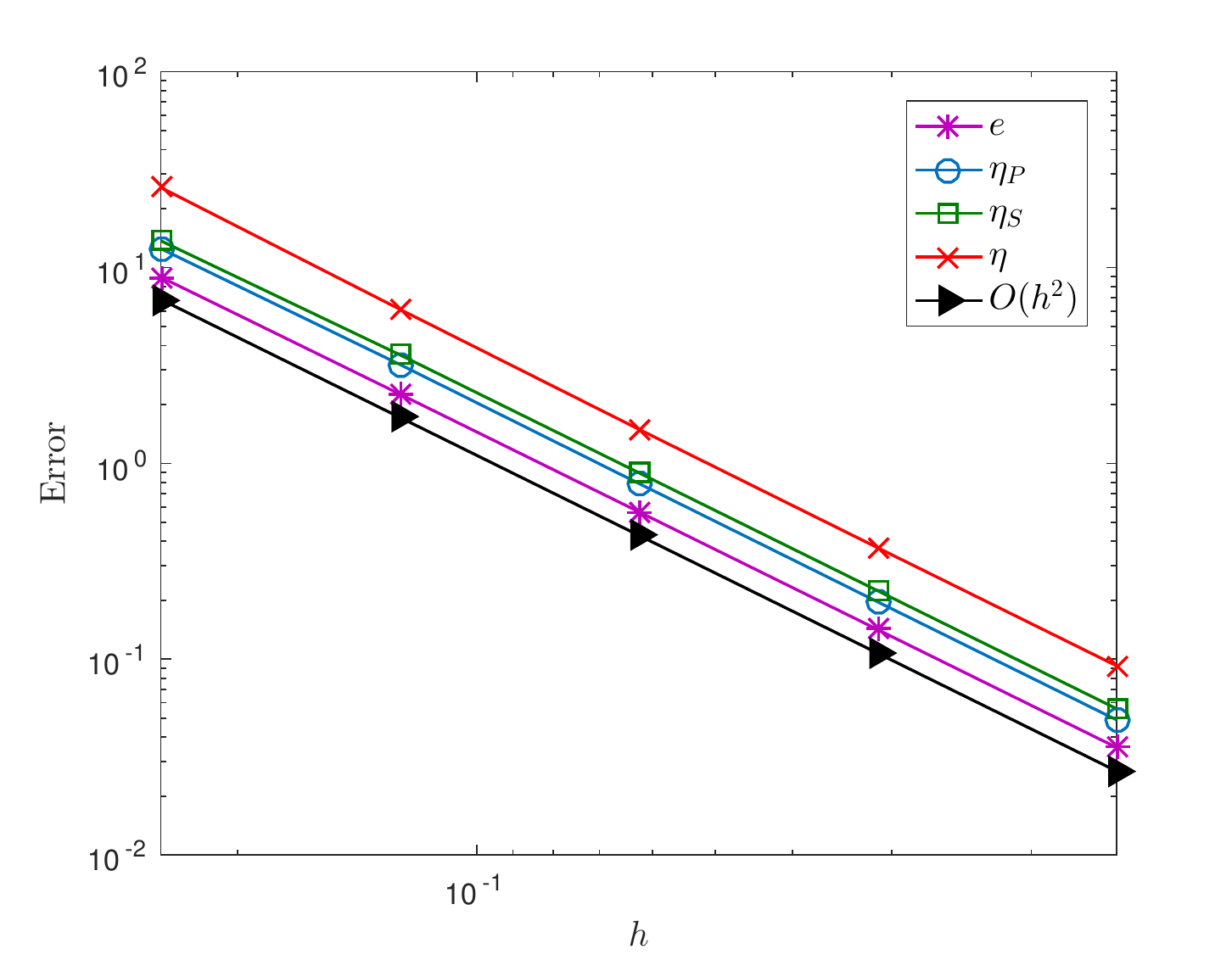}
\label{ex1mu100nu49999}
\caption{Exact $(e)$ and estimated energy errors computed with the residual-based ($\eta$), local Stokes problem $(\eta_{S})$ and local Poisson problem ($\eta_{P}$) estimators, for varying mesh size $h$ and parameters $\mu=100$ and $\nu=.4$ (left); $\mu=100$ and $\nu=0.49999$ (right).}
\label{Ex1mu100}
\end{figure}

 \begin{table}[ht!]
 \caption{Effectivity indices computed with the residual-based ($\eta$), local Stokes problem $(\eta_{S})$ and local Poisson problem ($\eta_{P}$) estimators, for varying Poisson ratios $\nu=.4, .499,.49999$ and fixed $\mu=100$.}
    \label{tabeff100}
\begin{center}
    \begin{tabular}{ | c | c | c | c | c | c | c | c | c | c | }
    \hline
    \multicolumn{4}{| c |}{$\nu=.4$}&\multicolumn{3}{| c |}{$\nu=.499$}&\multicolumn{3}{| c |}{$\nu=.49999$}\\
    \hline
    $h$ & $\frac{\eta}{e}$& $\frac{\eta_{\mathcal{S}}}{e}$& $\frac{\eta_{\mathcal{P}}}{e}$& $\frac{\eta}{e}$& $
    \frac{\eta_{\mathcal{S}}}{e}$& $\frac{\eta_{\mathcal{P}}}{e}$& $\frac{\eta}{e}$& $\frac{\eta_{\mathcal{S}}}{e}$& $
    \frac{\eta_{\mathcal{P}}}{e}$\\
    \hline   
     $\frac{1}{4}$ &$2.850$ &$1.5197$ &$ 1.3808$ &$2.847$ &$1.5176 $ & $1.3794$ &$2.847$ &$ 1.5175$ & $ 1.3794$\\
   $ \frac{1}{8}$ &$ 2.701$  &$ 1.5799$ &$1.4071$ & $  2.701$&$1.5797$&  $ 1.4070$ &$2.701$ &$1.5797 $ & $ 1.4070$ \\
    $\frac{1}{16}$ &$ 2.636$  & $ 1.5804$& $  1.3919$&$2.636$ & $ 1.5804 $& $ 1.3919$ &$ 2.636$ &$ 1.5804$ & $ 1.3919$\\
    $\frac{1}{32}$ &$2.617$  & $1.5782$& $ 1.3850$& $ 2.617$& $1.5782$& $ 1.3850$ &$2.617$ &$1.5782$ & $1.3850$ \\
    $\frac{1}{64}$ &$2.612 $  & $1.5774$& $1.3830$& $ 2.612 $& $1.5774$& $1.3830$ &$2.612$ &$1.5774 $ & $ 1.3830$ \\
    \hline
    \end{tabular}   
\end{center}

\end{table}

\subsection{Nonsmooth pressure solution}
The second problem is taken from  Houston et al.~\cite{PDT} and is posed on a square domain $\Omega = (0,1)\times (0,1)$. There is no body force, so $\bm{f}=\bm{0}$, but there is a nonzero essential boundary condition. We have $\bm{u}= (g,0)^{\top}$ on $\Gamma_{D}=\Gamma$ (so $\Gamma_{N}=\emptyset$), where
\begin{align*}
 g=\Big\{\begin{array}{lr}
\sin^2(\pi x), &\mbox{for}\; y=1,\\
 0,& \hbox{elsewhere}.
\end{array} 
\end{align*}

\begin{figure}[!th]
\centering
\includegraphics[width=.75\textwidth]{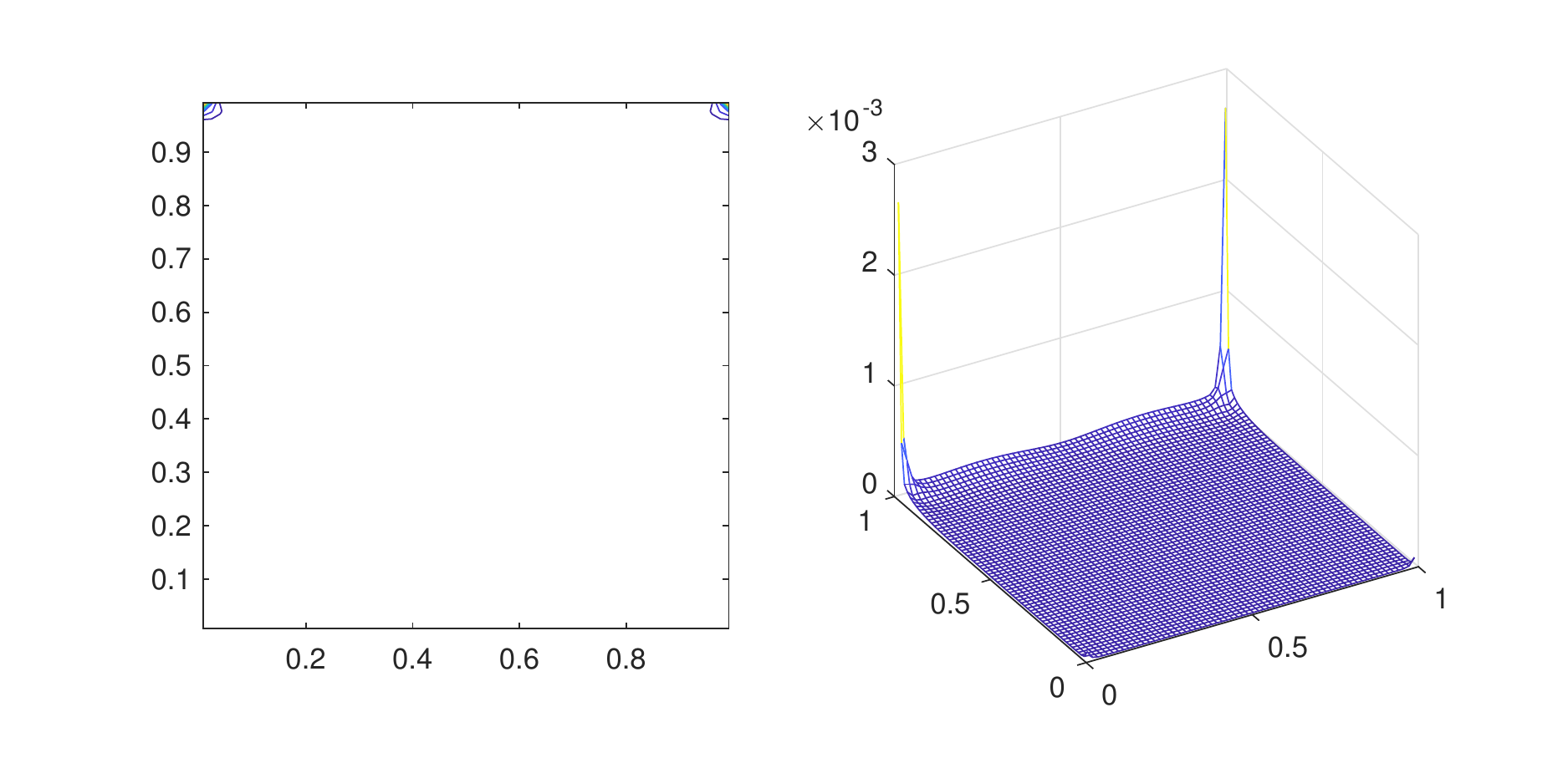}
\label{exc2nu4PEE}
\vspace{0.1in} 
\includegraphics[width=.75\textwidth]{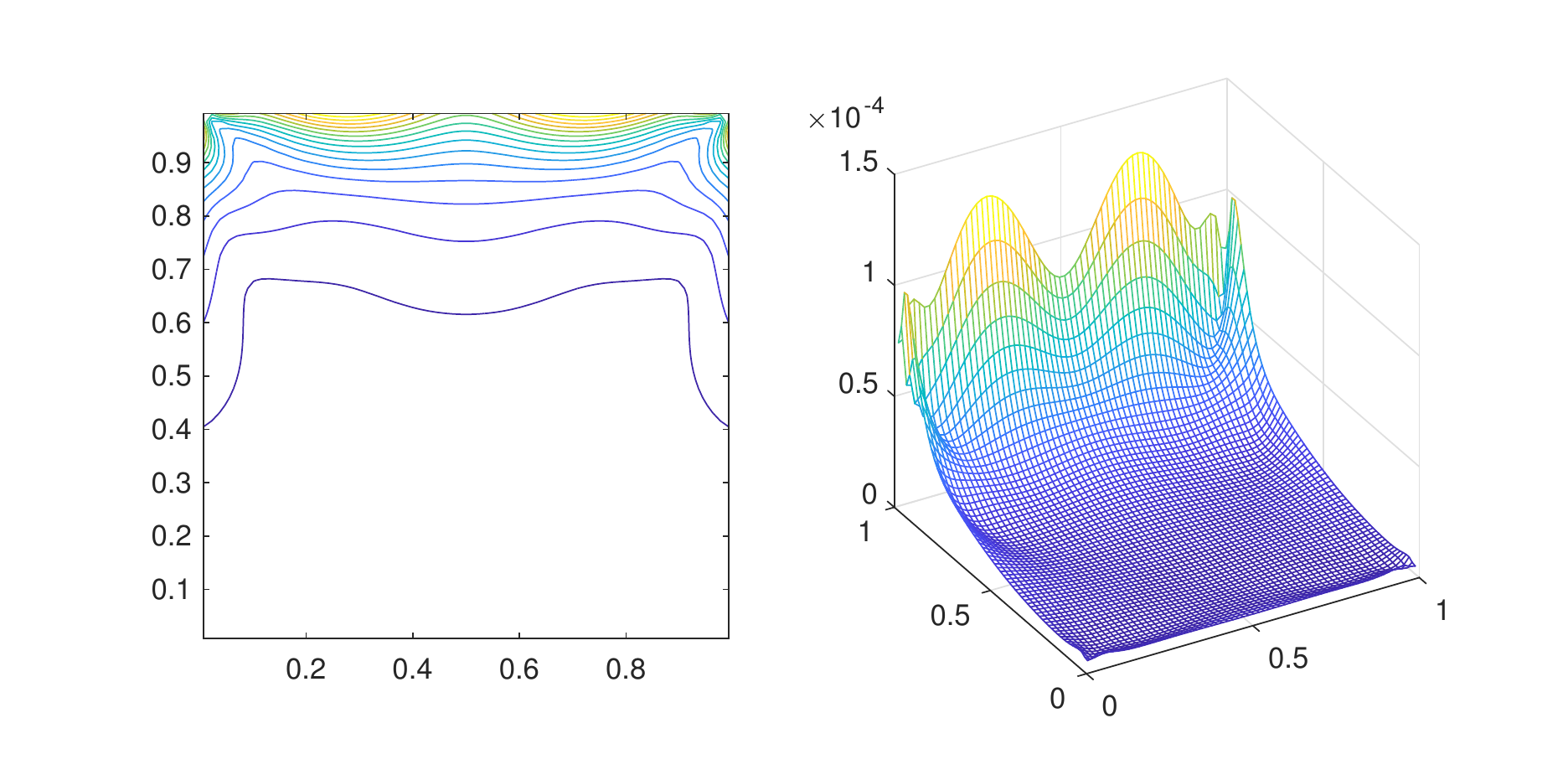}
\label{exc2nu49999PEE}
\caption{Contour and mesh plots of the element contributions $\eta_{P,K}$ to the local Poisson estimator $\eta_{P}$ for the second test problem, with Poisson ratio $\nu=0.4$ (top) and $\nu=0.4999$ (bottom).}
\label{Ex2nu4}
\end{figure}

This is a challenging problem if one is trying to solve it using a standard (not mixed) formulation of the planar elasticity equations, due to the locking phenomenon that occurs when $\nu \to 1/2$. In the mixed formulation, there are  pressure singularities at the top corners of the domain, but these become insignificant in the incompressible limit. As one would expect, the singular behaviour is detected by all three error estimators---it can be clearly seen in the comparison of the estimated errors computed using the Poisson problem local estimator $\eta_{P}$ for two different values of $\nu$ shown in Figure~\ref{Ex2nu4}. Note that while the solution exhibits full $H^2$-regularity, it is not $H^3$-regular. This lack of smoothness is reflected in the observed convergence rate of the estimated energy norm errors obtained with all three estimators; see Figure~\ref{Ex2c2}. Our computational results suggest that the energy norm error converges to zero at a suboptimal rate. The error is estimated to be $\mathcal{O}(h^{1.6})$ when $\nu=0.4$, but we also see that the optimal rate of two is recovered when sufficiently close to the incompressible limit $\nu=1/2$.

\begin{figure}[!th]
\centering
\includegraphics[width=.49\textwidth]{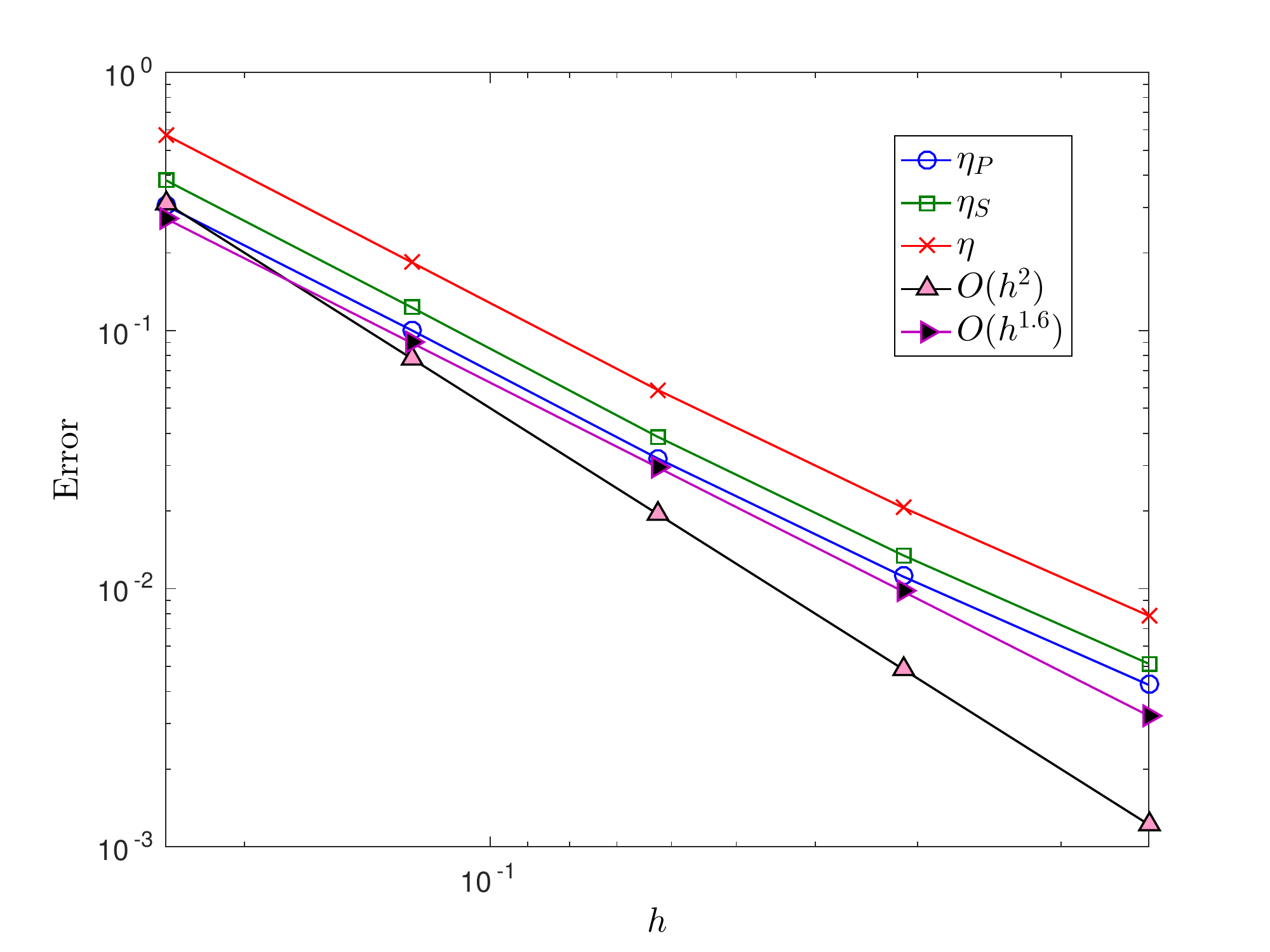}
\label{exc2nu4}
\centering
\includegraphics[width=.49\textwidth]{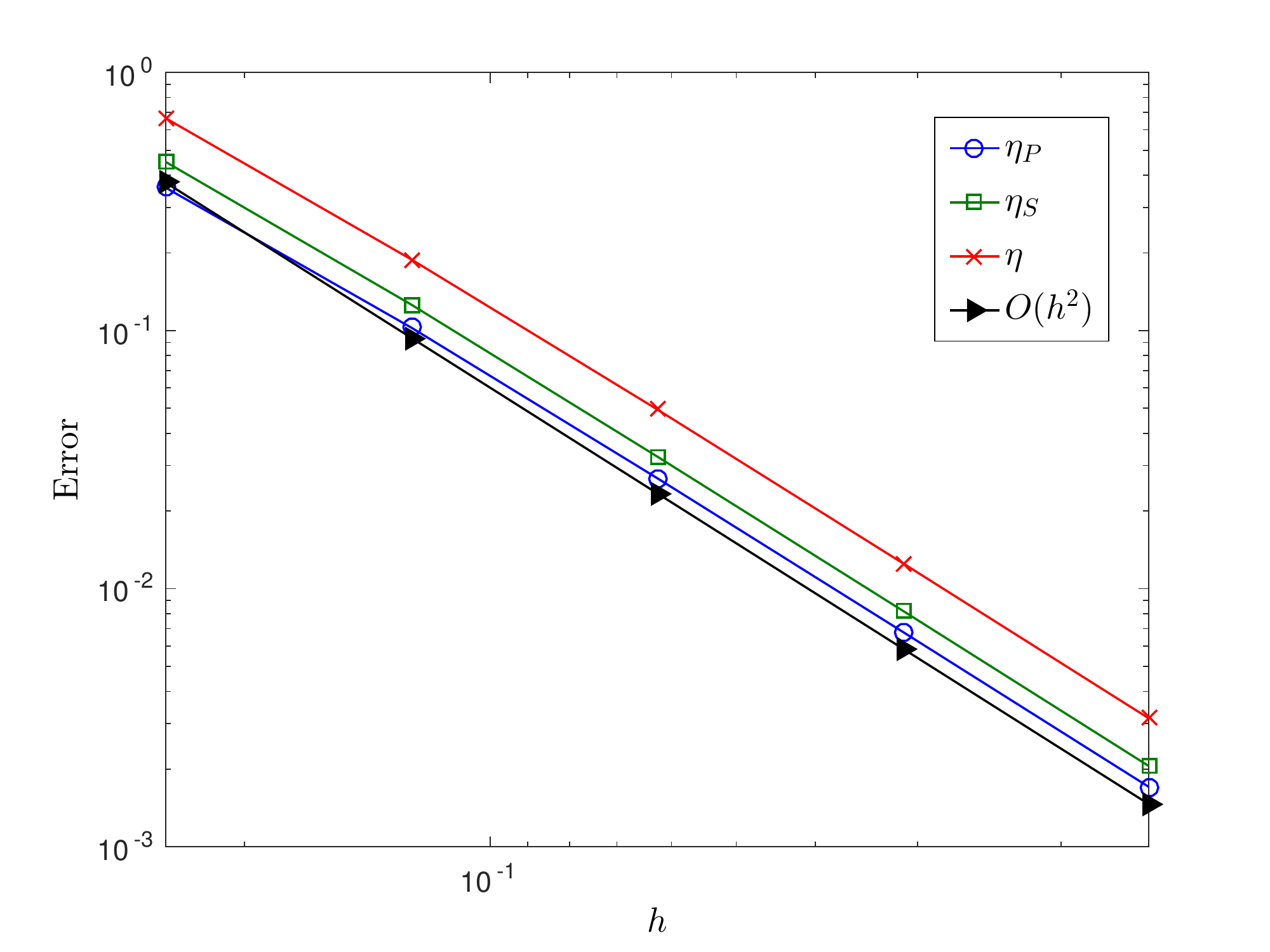}
\label{exc2nu49999}
\caption{Estimated energy errors computed with the residual-based ($\eta$), local Stokes problem $(\eta_{S})$ and local Poisson problem ($\eta_{P}$) estimators, for varying mesh size $h$ and parameters $\mu=1$ and $\nu=.4$ (left); $\mu=1$  and $\nu=0.49999$  (right).}
\label{Ex2c2}
\end{figure}

\subsection{Mixed boundary conditions}
To test the error estimation strategies on a more realistic example, we extend the second problem above to include a natural boundary condition (so that $\Gamma_{N} \neq \emptyset$).
Specifically, we now consider the square domain $\Omega = (-1,1)\times (-1,1)$, with a natural condition on the right edge $\Gamma_{N}=\left\{1\right\} \times (-1,1).$  \rbl{We impose a zero essential boundary condition on $\Gamma_{D}=\Gamma \setminus \Gamma_{N}$ and we also set $\bm{f}=(1,1)^\top$.}

\begin{figure}
\centering
\includegraphics[width=.46\textwidth]{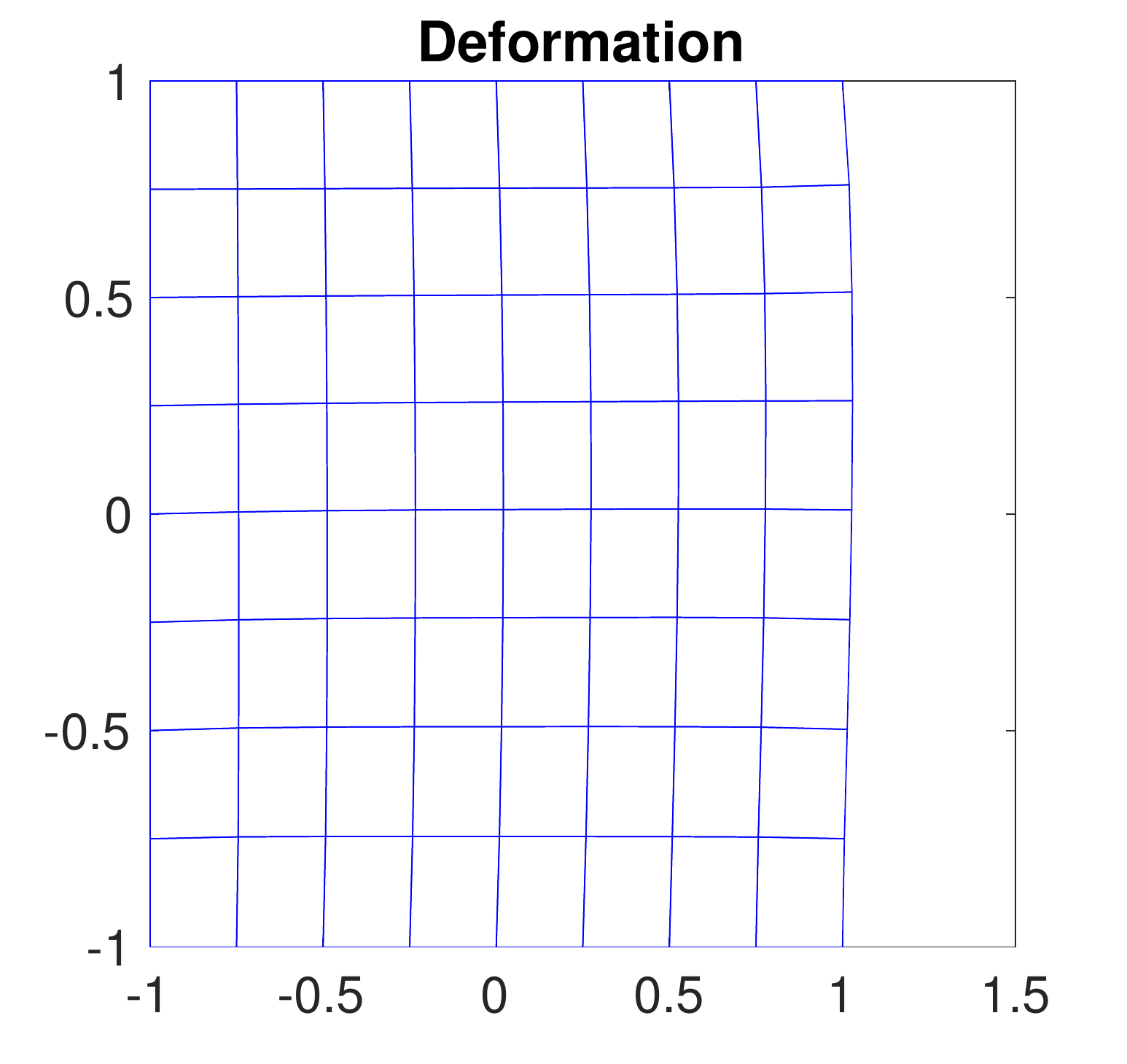}
\label{exc2bnu4DEF}
\centering
\includegraphics[width=.46\textwidth]{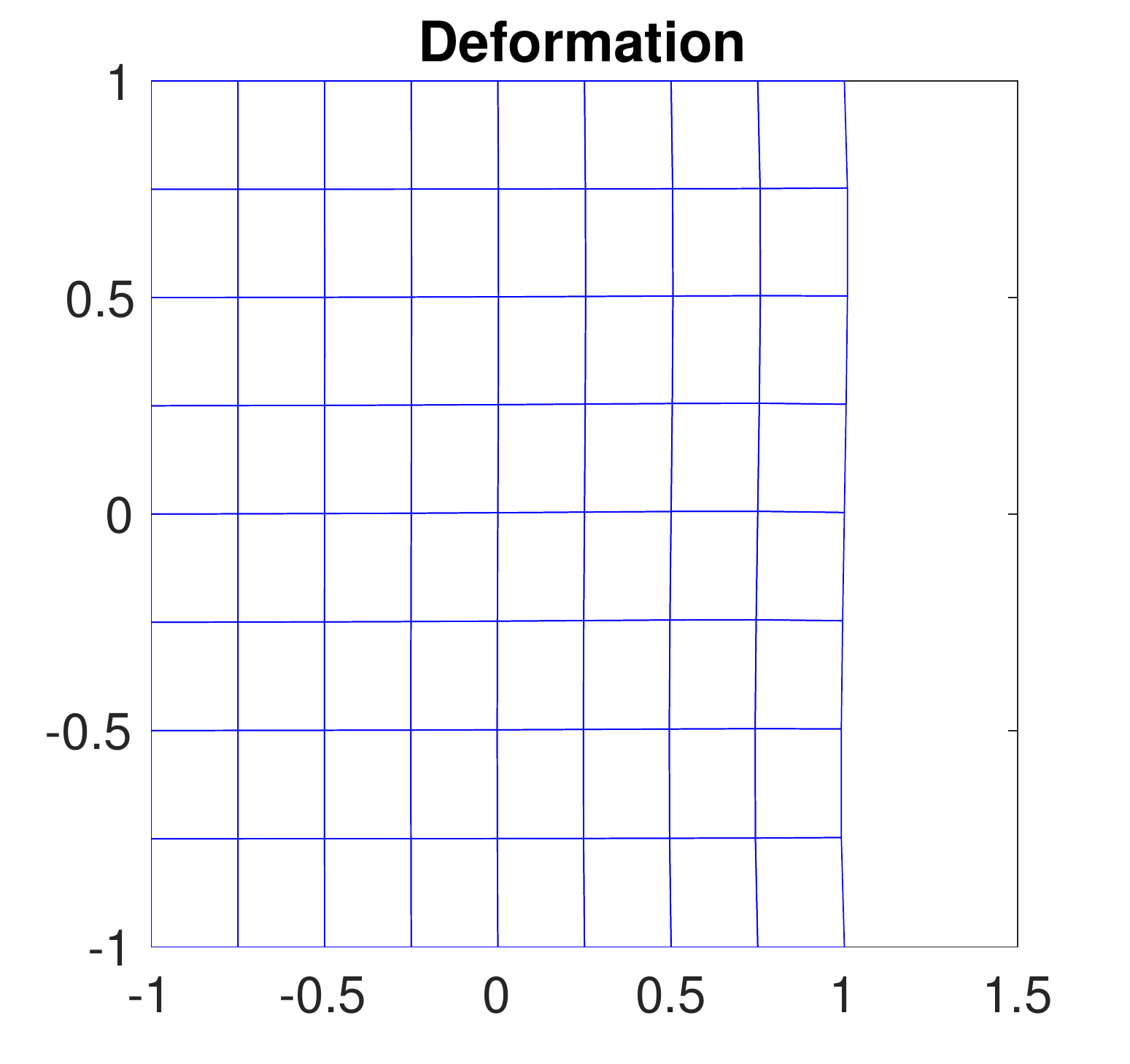}
\label{exc2bnu49999DEF}
\caption
{Computed deformation for $h=1/4$, with $\mu=10$ and $\nu=.4$ (left); $\nu=0.49999$ (right).}
\label{Ex1c2bDEF}
\end{figure}

\begin{figure}
\centering
\includegraphics[width=.46\textwidth]{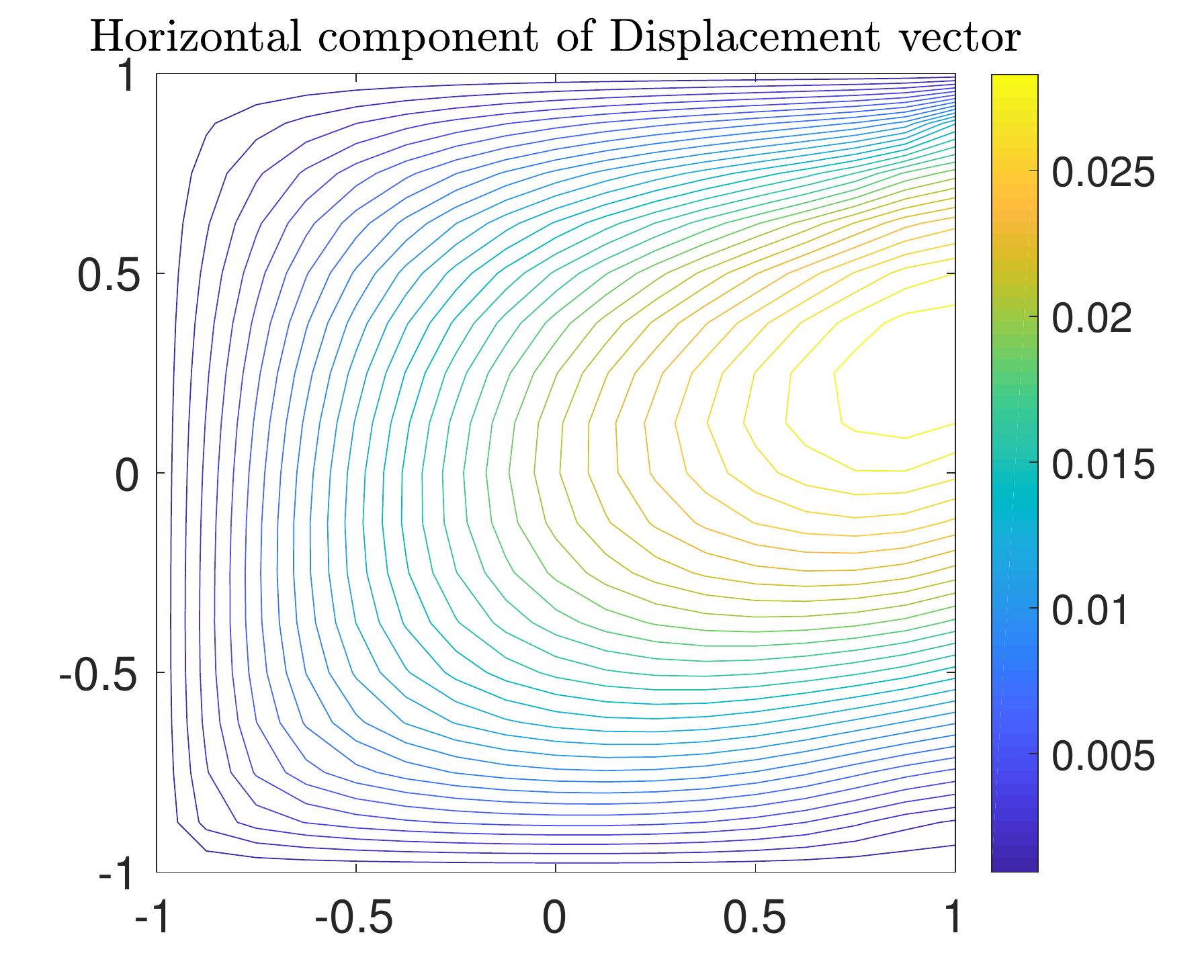}
\label{exc2bnu4DEFconf}
\centering
\includegraphics[width=.43\textwidth]{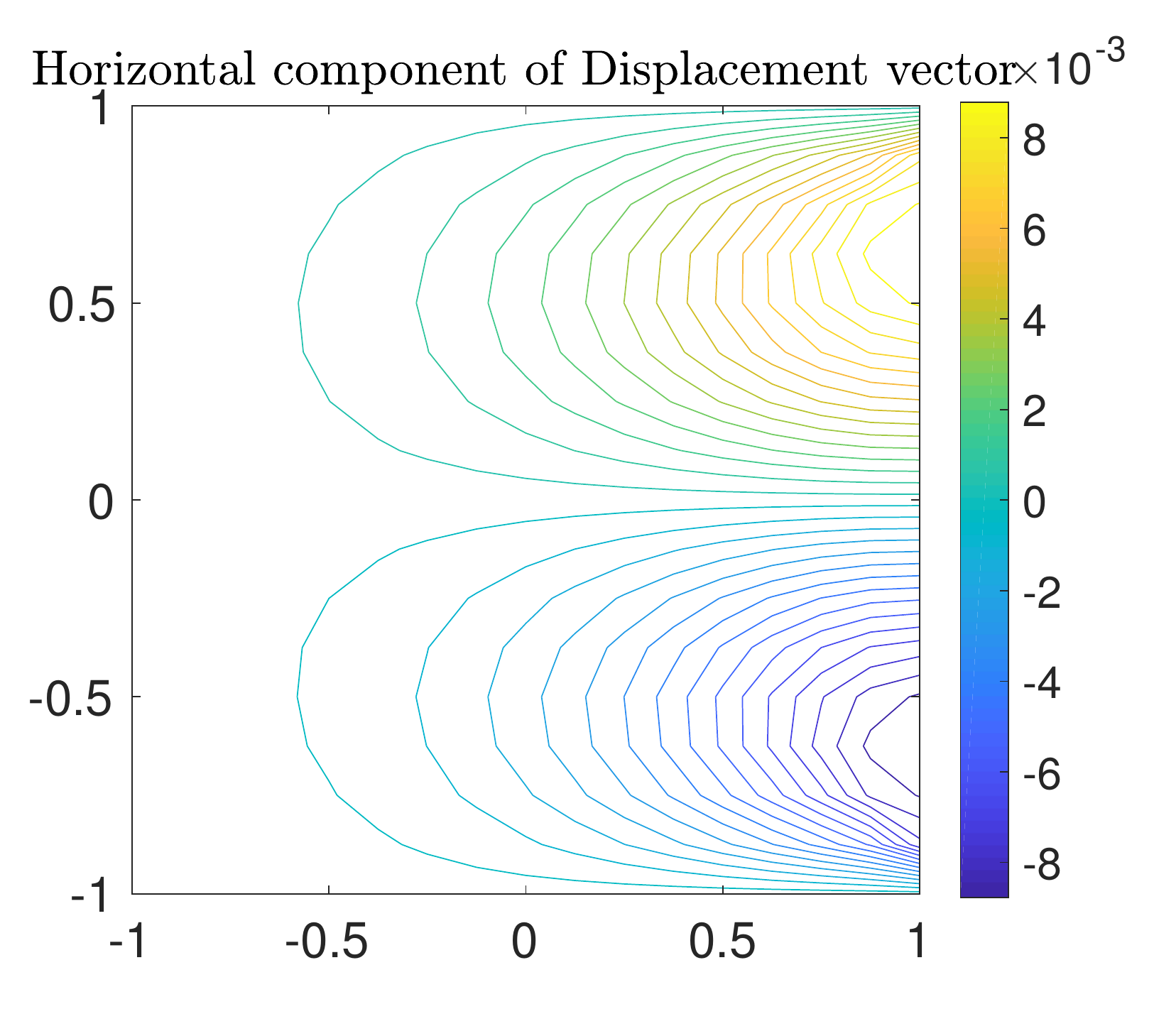}
\label{exc2bnu49999DEFconf}
\caption
{Contour plot of first component of displacement vector for $h=1/4$, with $\mu=10$ and $\nu=.4$ (left); $\nu=0.49999$ (right).}
\label{Ex1c2bDEFconf}
\end{figure}
\begin{figure}
\centering
\includegraphics[width=.46\textwidth]{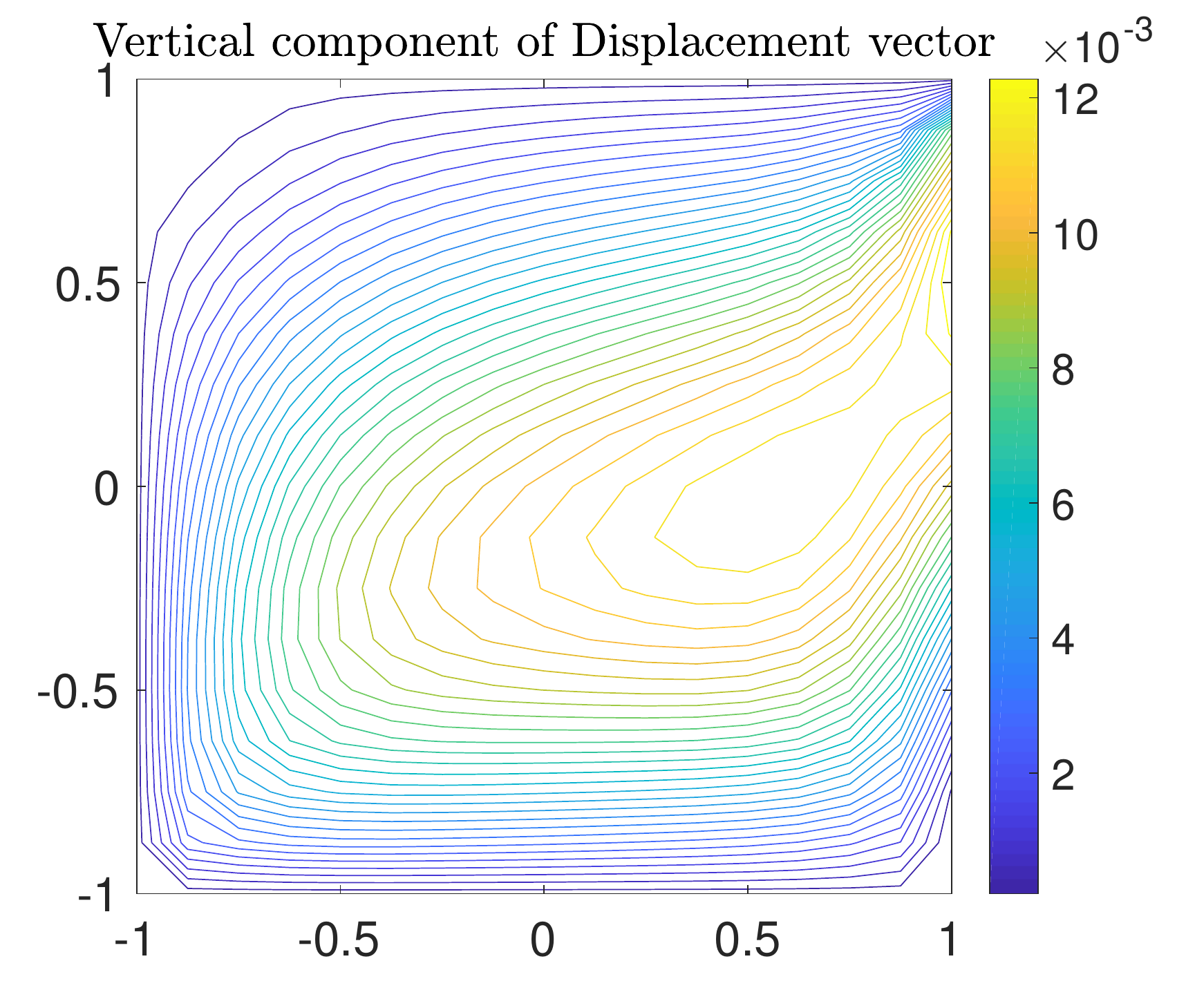}
\label{exc2bnu4DEFcons}
\centering
\includegraphics[width=.46\textwidth]{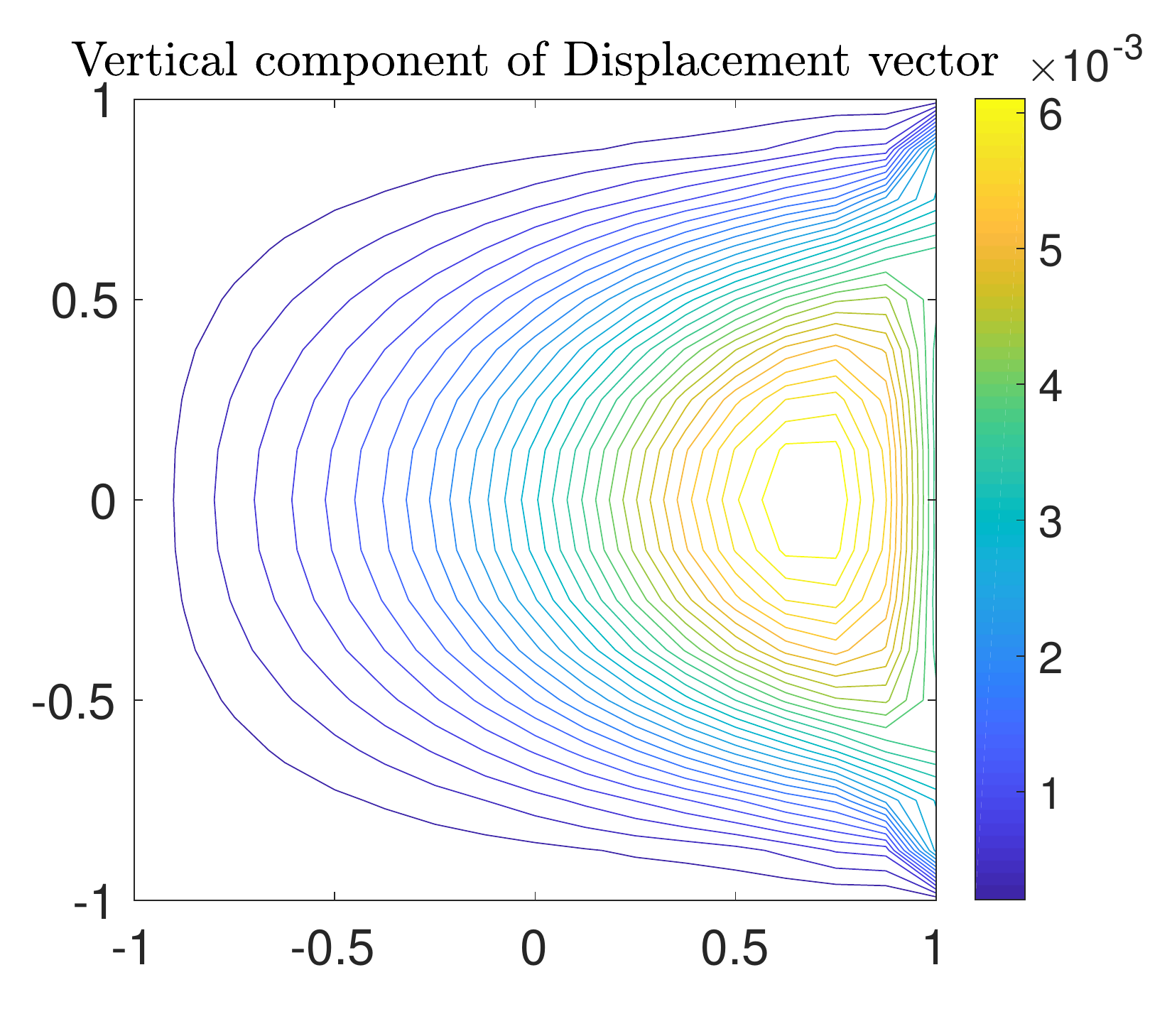}
\label{exc2bnu49999DEFcons}
\caption
{Contour plot of second component of displacement vector for $h=1/4$, with $\mu=10$ and $\nu=.4$ (left); $\nu=0.49999$ (right).}
\label{Ex1c2bDEFcons}
\end{figure}

\begin{figure}
\centering
\includegraphics[width=.40\textwidth]{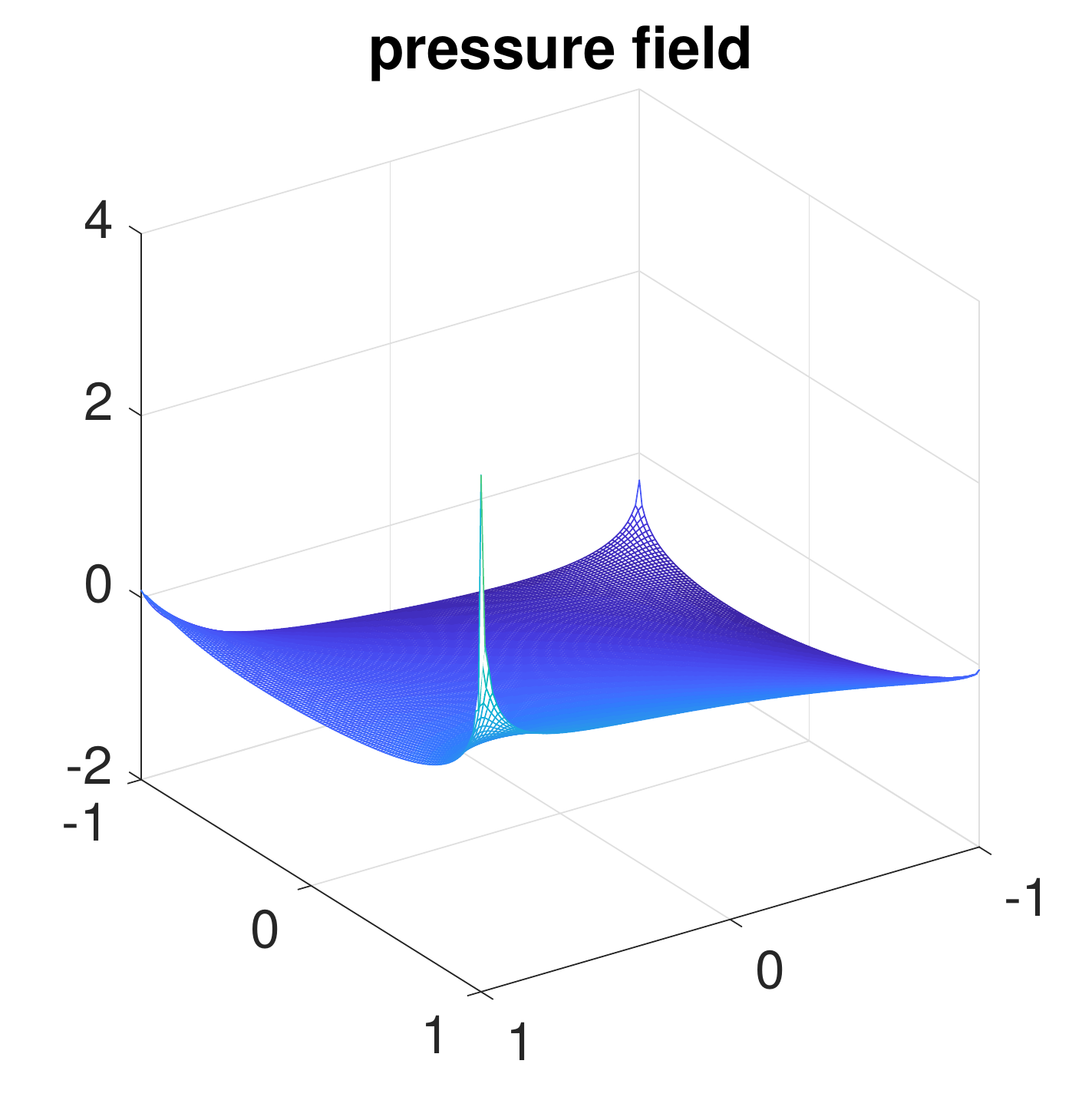}
\label{exc2bnu4PRE}
\hspace{0.005cm}
\centering
\includegraphics[width=.40\textwidth]{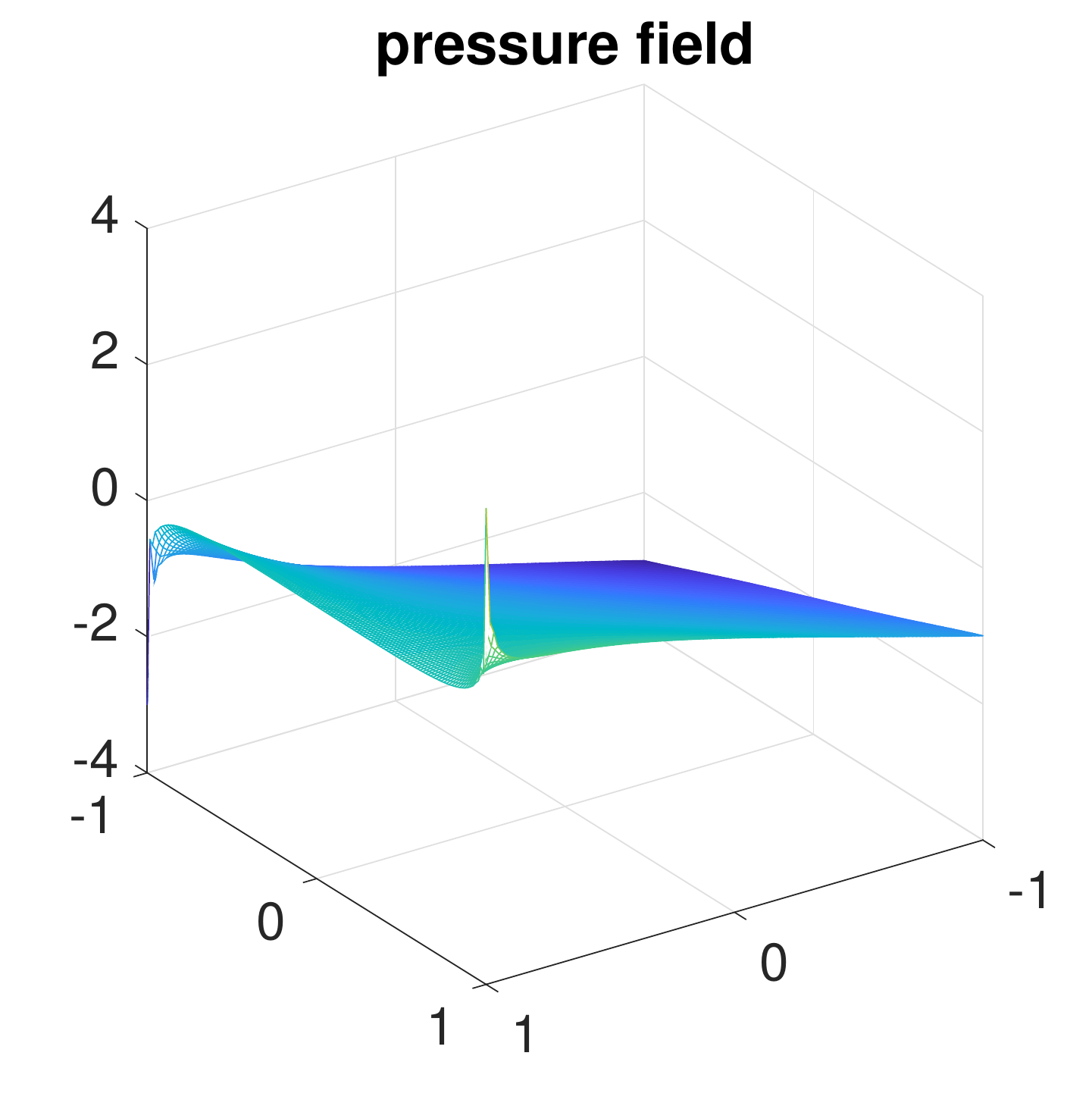}
\label{exc2bnu49999PRE}
\caption
{Computed pressure field for  $h=1/64$, $\mu=10$ and  $\nu=.4$ (left);   $\nu=0.49999$  (right).}
\label{Ex1c2bPRE}
\end{figure}

The solution to this problem does not even have $H^2$ regularity: there is a strong singularity at the top right corner, where the boundary condition changes from essential to natural, and weaker singularities at the points $(-1,1)$ and { $(-1,-1)$ for $\nu=0.4$}. {In other case $\nu=0.49999$, there are two strong singularities at two corners, where the boundary condition changes from essential to natural.} The computed deformations of the elastic body for two representative values of the Poisson ratio $\nu$  are shown in Figure~\ref{Ex1c2bDEF} and the associated (rotated) pressure solutions are presented in Figure~\ref{Ex1c2bPRE}. The strength  of the singularity at the corner $(1,1)$ is very evident in the computed pressure field {for $\nu=0.4$}. {But, for $\nu=0.49999$, the two singularities at the corners $(1,1)$ and $(-1,1)$ are very evident and have opposite directions in the computed pressure field.}
  A plot of the element contributions to the Poisson estimator $\eta_P$  that is computed on the same grid is shown in Figure~\ref{Ex1c2b49999Est}. We see that the error estimator does a good job in identifying the position and relative strength of  the 
  \rbl{underlying} singularities. {In Figure~\ref{errex3Est},} the lack of smoothness in the solution is reflected in the  convergence rate (slower than  ${\mathcal O}(h)$) of all three error estimates  when the grid is refined  uniformly. 

\begin{figure}[!th]
\centering
\includegraphics[width=.4\textwidth]{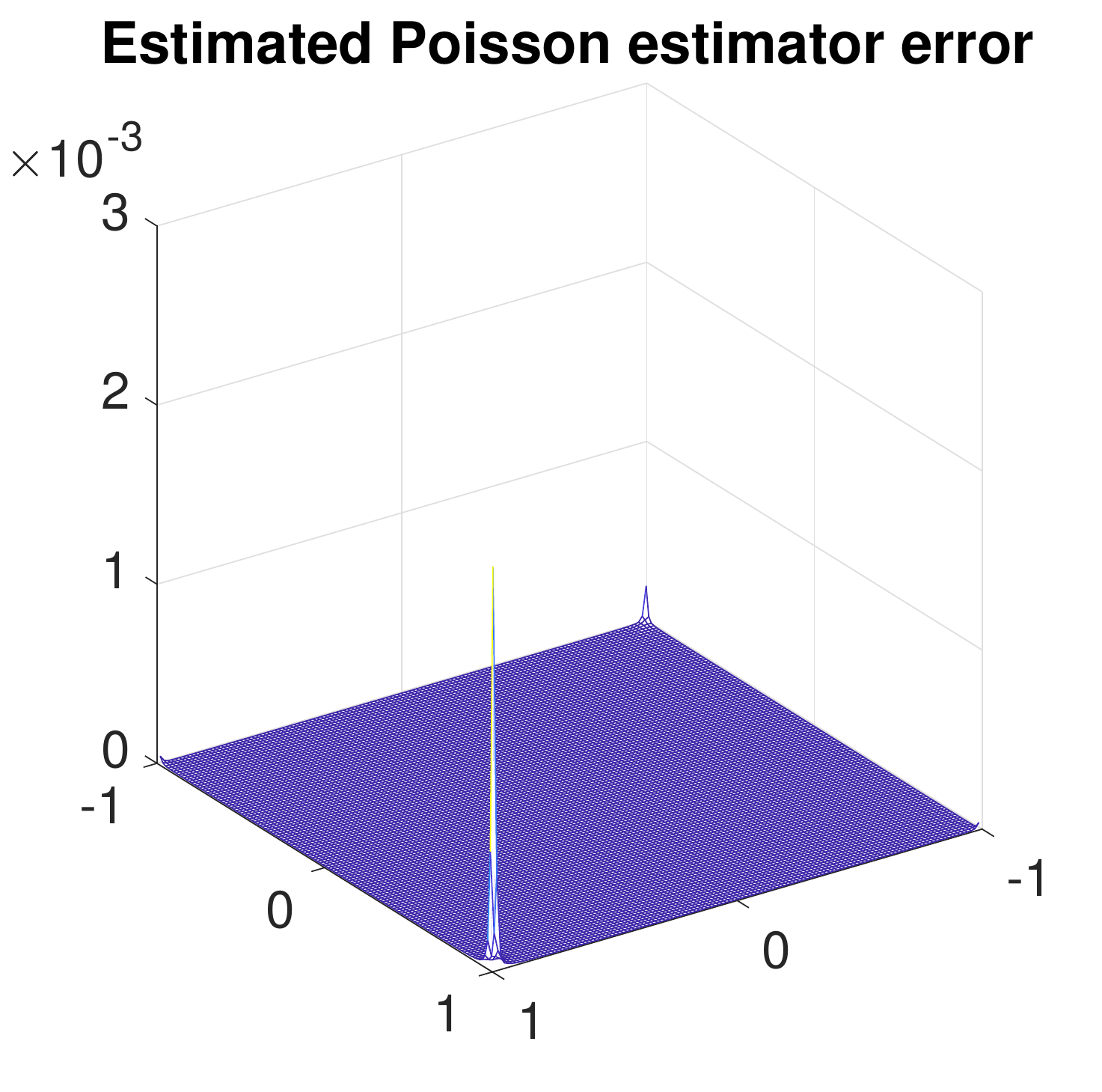}
\label{exc2bnu49999PEE}
\centering
\includegraphics[width=.4\textwidth]{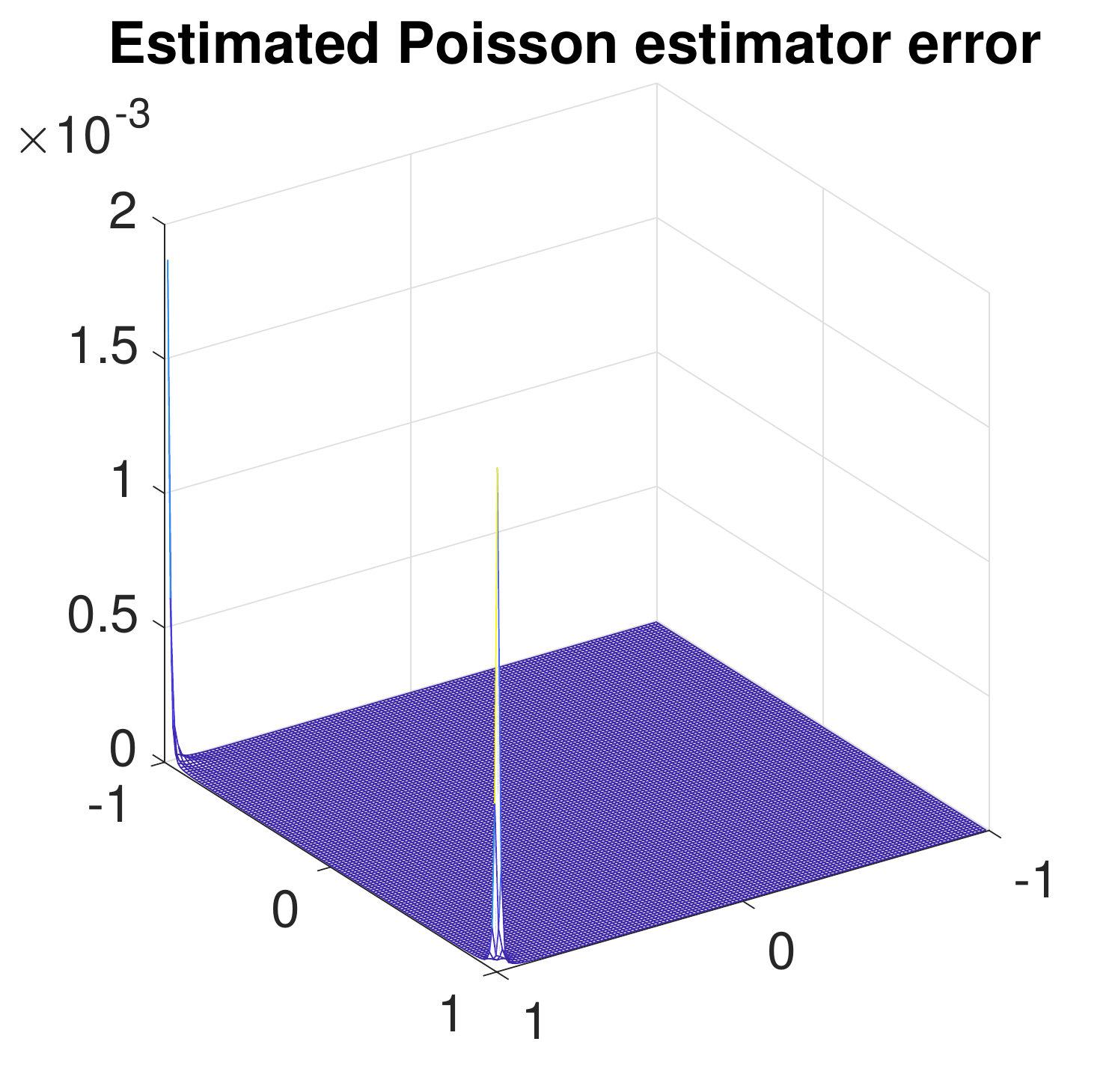}
\label{exc2bnu49999PEE1}
\caption
{Mesh plot of the element contributions $\eta_{P,K}$ to the local Poisson estimator $\eta_{P}$ for the third test problem computed with $h=1/64$, for the case $\mu=10$ and $\nu=0.4$(left), $\nu=0.49999$ (right).}
\label{Ex1c2b49999Est}
\end{figure}
\begin{figure}[!th]
\centering
\includegraphics[width=.4\textwidth]{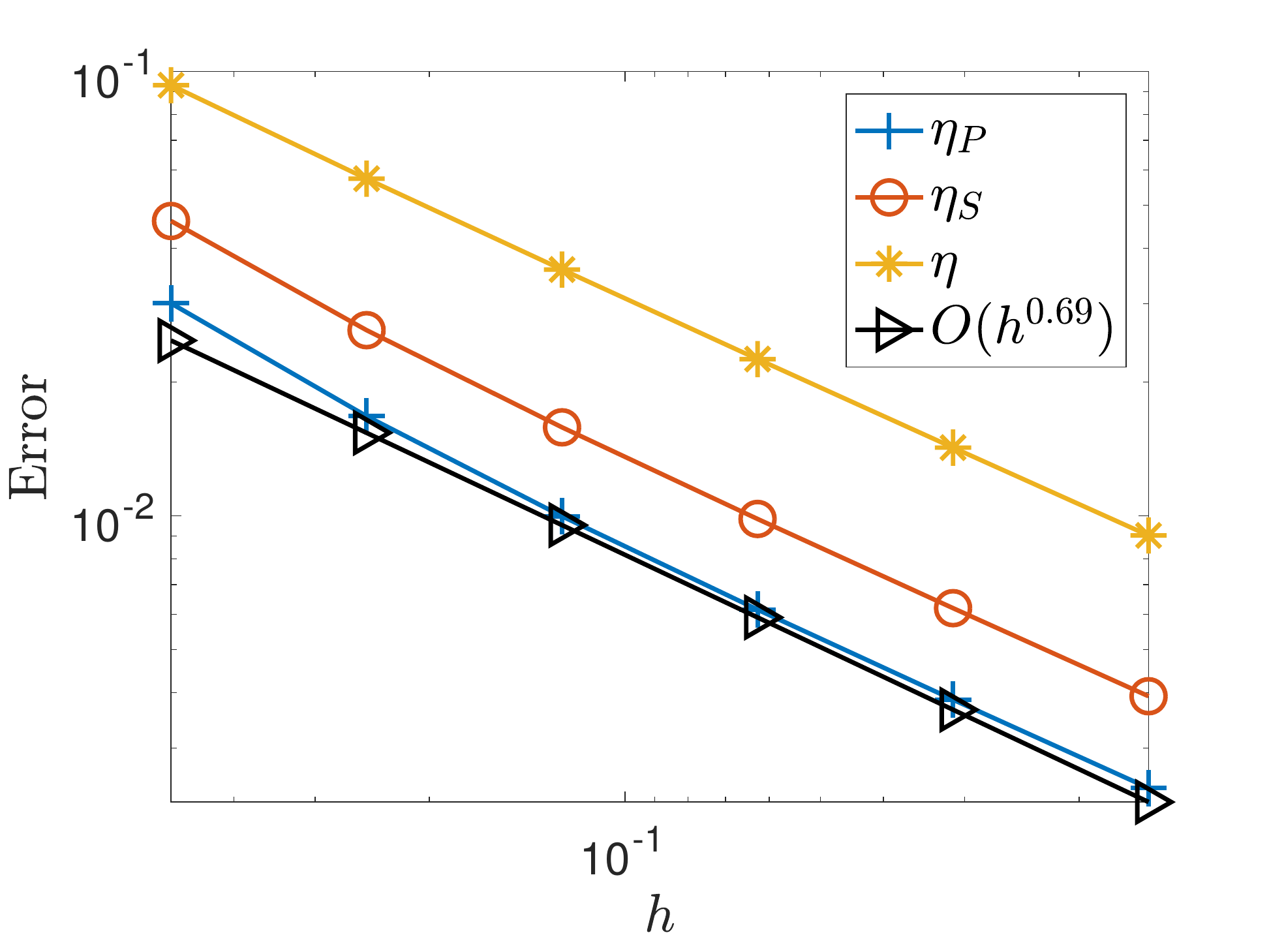}
\label{errex3Est04}
\centering
\includegraphics[width=.4\textwidth]{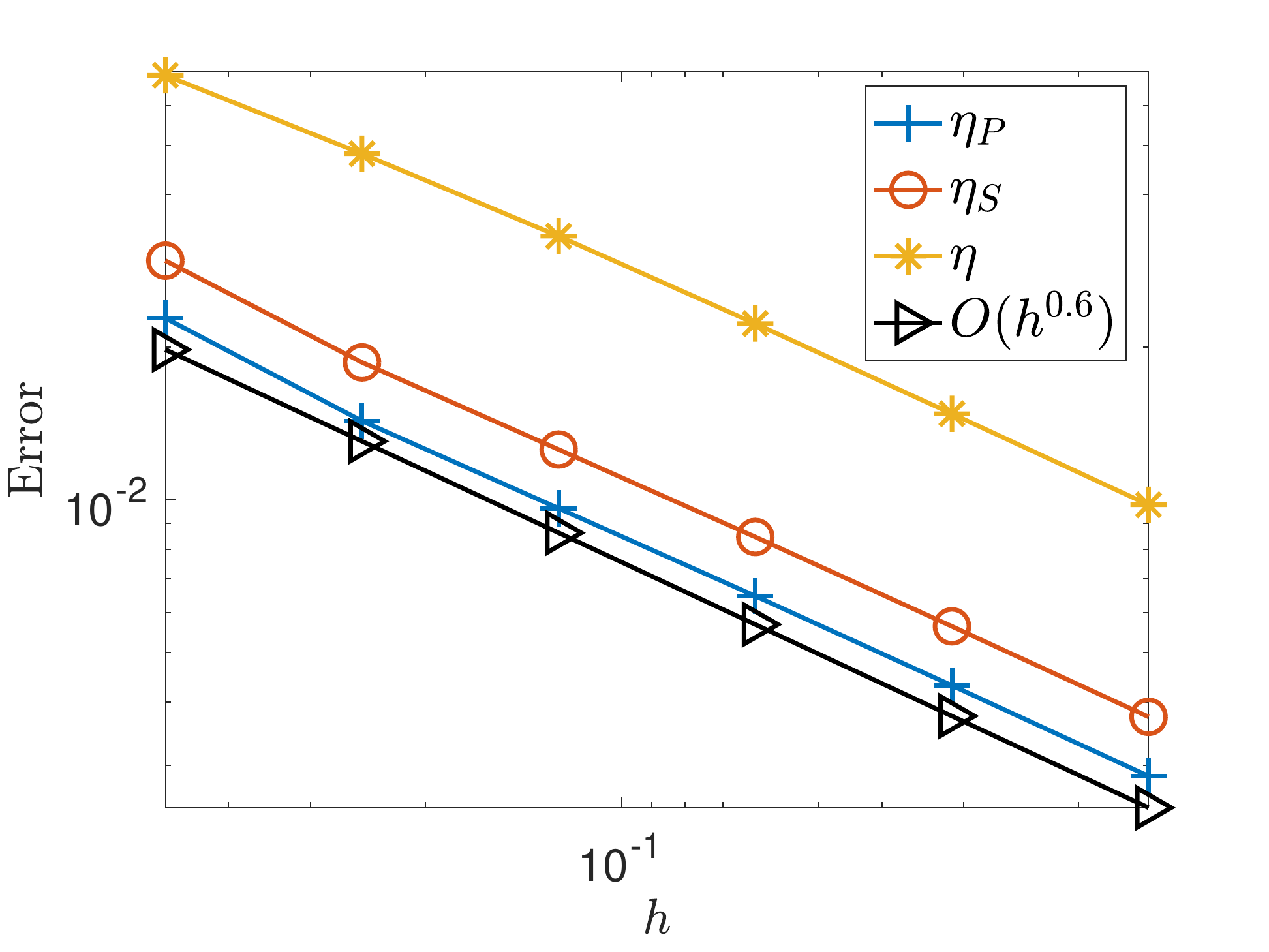}
\label{errex3Est049999}
\caption
{Estimated energy errors computed with the residual-based ($\eta$), local Stokes problem $(\eta_{S})$ and local Poisson problem ($\eta_{P}$) estimators, for varying mesh size $h$ and parameters $\mu=10$ and $\nu=.4$ (left); $\mu=10$  and $\nu=0.49999$  (right).}
\label{errex3Est}
\end{figure}
\section{Concluding remarks}\label{conclusions}

There are two important contributions in this paper. First, we have  developed some new {robust} error estimators for computing locking-free approximations of linear elasticity problems.  We have shown that these estimators give reliable estimates of the approximation error even when working arbitrarily close to the incompressible limit $\nu=1/2$. Second, we have identified a practical error estimation strategy based on solving uncoupled Poisson problems for the displacement components that yields effectivity indices close to unity in all the cases tested. Extending this work to enable the adaptive solution of elasticity problems with {\it uncertain} material parameters is the subject of ongoing research. Ensuring robustness in the error estimation process is fundamentally important when solving problems with large variability in the measurement of such parameters.


\bibliographystyle{siam}
\bibliography{kps}

\end{document}